\documentclass{amsproc}

\usepackage{amsmath,amssymb}
\usepackage{graphicx}
\usepackage{pinlabel}
\usepackage{tikz}
\usepackage[section]{placeins}
\usepackage[hidelinks]{hyperref}

\newcommand{\Flow}{\operatorname{Flow}}
\newcommand{\type}{\operatorname{type}}
\newcommand{\ustar}{\mathbin{\underline{\ast}}}
\newcommand{\ostar}{\mathbin{\overline{\ast}}}
\newcommand{\Z}{\mathbb Z}

\newtheorem{thm}{Theorem}[section]
\newtheorem{prop}[thm]{Proposition}
\newtheorem{lem}[thm]{Lemma}
\newtheorem{cor}[thm]{Corollary}
\theoremstyle{definition}
\newtheorem{defn}[thm]{Definition}
\newtheorem{exmp}[thm]{Example}
\theoremstyle{remark}
\newtheorem{rem}[thm]{Remark}

\numberwithin{equation}{section}

\begin{document}

\title{Bridge numbers of virtual graphs and handlebody-links}

\author{Nikolaos Chantis}
\address{901 Palm Walk, Tempe, AZ 85281}
\email{nchantis@asu.edu}

\author{Puttipong Pongtanapaisan}
\address{1050 N Mills Ave, Claremont, CA 91711}
\email{puttip@pitzer.edu}

\subjclass[2020]{Primary 57K10; Secondary 57K12}
\keywords{spatial graph, virtual spatial graph, bridge index, quandle,
biquandle, handlebody-link}

\begin{abstract}
We study bridge indices of graphs, virtual graphs, and their
handlebody classes.  We construct virtual ravel bouquets
for which the difference between bridge index and weighted overpass
bridge index is unbounded.  The lower bound uses exponentially many
colorings for one fixed flow in a $\Z_2$-family of biquandles.
For each negative Euler characteristic we construct classical ravels
of one fixed trivalent graph with arbitrarily large bridge index.
We also obtain unbounded even bridge indices for ravels of each
fixed bouquet rank.  A separate classical bouquet family has an
unbounded difference between its spatial-graph bridge index and the
bridge index of its regular neighborhood.  Finally, edge connected
sums of Suzuki curves give exact constrained and unconstrained bridge
indices whose difference is unbounded.
\end{abstract}

\maketitle

\section{Introduction}

Virtual spatial graphs, introduced by Fleming and Mellor
\cite{flemingMellor}, simultaneously extend virtual knots and classical
spatial graphs.  Their diagrams are related by the classical graph
Reidemeister moves, the virtual moves, and the virtual vertex move.
As with virtual knots, there is more than one useful notion of bridge
complexity.  A spatial graph introduces further choices: one may
allow vertices on both sides of a bridge surface, or require all
vertices to lie on the same side. One may also minimize over isotopic embeddings of a graph or over spines of its regular neighborhood.

We call the resulting quantities the \emph{unconstrained bridge
index} $b(G)$ and the \emph{constrained bridge index} $b_c(G)$.  We
also compare $b(G)$ with the weighted overpass bridge index $b_o(G)$ and with
the virtual handlebody bridge index $b_{\mathrm{hb}}([G]_{\mathrm{hb}})$,
obtained by minimizing over all handlebody-equivalent spatial
spines.  In the classical category, this is written
$b_{\mathrm{hb}}(N(G))$ for the regular neighborhood of $G$.
The evident inequalities
\[
 b_o(G)\leq b(G)\leq b_c(G)
 \quad\text{and}\quad
 b_{\mathrm{hb}}([G]_{\mathrm{hb}})\leq b(G)
\]
give no indication of the possible size of their defects.  Throughout,
$b$ and $b_c$ count all intersections with the bridge surface.  We
first define the unweighted overpass index $\widehat b_o$ by counting
overpass components, and then define $b_o$ by summing their weights,
just as for tree components of a bridge splitting.  In particular,
an arc has weight two and a $d$-valent pod has weight $d$.
The two overpass indices are minimized separately; the weights are
not recovered by multiplying an unweighted minimum by a fixed
constant.

For classical spatial graphs the equality $b_o(G)=b(G)$ is
\cite[Corollary~3.3]{blackwell2026}, with both sides rescaled to
our convention.  The first construction therefore takes place in
the virtual category.  Here and in the main results, $b_o$ and $b$
use the same component weights.

\begin{thm}\label{thm:kishino-overpass}
For every $n\geq2$, there is a virtual ravel $V_n$ whose underlying
abstract graph is an $n$-petal bouquet and such that
\[
 b_o(V_n)=2n,\qquad
 b(V_n)\geq2n-2+2\left\lceil\frac{n+1}{2}\right\rceil.
\]
In particular,
\[
                 b(V_n)-b_o(V_n)\geq n-1\longrightarrow\infty.
\]
\end{thm}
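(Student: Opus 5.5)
We will build $V_n$ by modifying a trivial $n$-petal bouquet with Kishino-type virtual clasps. Each petal is drawn so that it passes over the others only inside a single overpass arc, and the petals interlock in pairs through Kishino configurations: classical crossings joined by virtual crossings, so that no classical Reidemeister move removes them. The construction is arranged so that there is a weighted overpass decomposition consisting of the vertex pod alone. The $2n$-valent vertex together with initial segments of all petal ends forms one overpass pod of weight $2n$, and every remaining arc of each petal is an underpass. Every classical crossing has its over-strand inside this pod, so $b_o(V_n)\le 2n$. For the reverse inequality we note that any overpass decomposition has a component containing the vertex. That component is a pod of valence at least $2n$, or it is split into several components of total weight at least $2n$. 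Hence $b_o(V_n)\ge 2n$, and the first equality holds.

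The lower bound on $b(V_n)$ is the heart of the proof, and we will obtain it from colorings. We use a $\Z_2$-family of biquandles $X_k$ together with a flow on $V_n$ (an assignment of group elements to edges compatible at the vertex), that is, flowed biquandle colorings in the sense of virtual spatial graphs. The standard bridge-index argument then applies. Take a bridge presentation realizing $b(G)$, i.e. a diagram that meets the bridge surface $b$ times. The coloring is determined by its values on the maximal (minimal) arcs, and the vertex imposes one relation. So the number of colorings for a fixed flow is at most $|X|^{\beta}$, where $\beta$ counts the free arc labels. For a bouquet with the vertex on one side this gives $\beta=b/2-(\text{vertex correction})$; we intend to calibrate the count so that $\#\mathrm{Col}\le |X|^{(b-2n+2)/2}$. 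This bound must hold for an arbitrary bridge position, including one with the vertex among the minima. The case split there is vertex above versus below the bridge surface, and it requires care with weights.

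Next we compute the colorings of the Kishino diagram directly, for one fixed flow. Each virtual clasp contributes independent $\Z_2$-choices, and pairing petals gives $\lceil (n+1)/2\rceil$ independent factors. The colorings then number at least $|X|^{\lceil (n+1)/2\rceil}$, which is exponential in the number of clasps. Combining this with the upper bound gives
\[
(b-2n+2)/2\ge\lceil (n+1)/2\rceil,
\]
which is exactly the stated inequality. The final inequality $b-b_o\ge n-1$ follows because $2\lceil (n+1)/2\rceil-2\ge n-1$.

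The main obstacle is the coloring count. We must choose the biquandle family and the flow so that the Kishino clasps, which are invisible to fundamental-group-type invariants, actually produce independent solutions. The independence must hold uniformly in $n$, which requires solving the crossing relations explicitly around the vertex. We would first try Alexander-type biquandles over $\Z_2[t^{\pm1},s^{\pm1}]$ quotients, where the count reduces to the nullity of a linear system.
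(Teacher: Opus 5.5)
Your outline has the right architecture: a vertex pod of weight $2n$ that carries every overcrossing, a fixed-flow $\Z_2$-family coloring count, and the identification of $(b-2n+2)/2$ with the number $r$ of tree components on the vertex side. However, the step that proves the theorem is missing. You never define $V_n$; ``petals interlock in pairs through Kishino configurations'' is not a construction. The count $|X|^{\lceil (n+1)/2\rceil}$ is read off from the inequality you want rather than computed, and ``pairing petals'' supplies no argument. For even $n$ this count also exceeds what is needed (more than $|X|^{\lceil (n+1)/2\rceil-1}$ colorings suffice) and what the paper's family delivers.

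The missing idea is a concrete diagram together with a concrete family in which independent solutions can be exhibited. The paper's construction runs as follows.
\begin{itemize}
\item Cut the right-hand two-crossing half of the Kishino diagram open in the semiarc $g$ between its two overcrossings. This gives a one-string tangle $L$ whose closure is a virtual unknot.
\item Chain $n$ copies of $L$ into a virtual knot $K_n$, join consecutive connecting arcs by edges forming a planar tree $E_n$, and contract $E_n$ to the vertex. Each petal then contains one copy of $L$ and begins and ends with an overcrossing; this is what makes your pod presentation exist.
\item Use the $\Z_2$-family of the four-element $\mathbb F_2^2$-linear biquandle $T_4$ with the flip virtual switch, with flow $1$ on $K_n$ and $0$ on the added tree edges.
\item Check that $L$ has two colorings with $g_-=g_+=c$ for each fixed point $c\in\{1,3\}$ of $w$. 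Constant color $c$ then satisfies every vertex relation, so the choices in the $n$ summands are independent.
\end{itemize}
This gives $2^{n+1}=4^{(n+1)/2}$ colorings for a single flow. The ceiling comes from the integrality of $r$, not from the count. Without such a computation (your Alexander-biquandle plan is only a hope), the lower bound on $b(V_n)$ is unproved.

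There are three smaller points.
\begin{itemize}
\item The statement requires a virtual ravel, and you never check that each petal is a virtual unknot. In the paper this holds because a single petal is the closure of $L$; nontriviality then follows from $b(V_n)>2n$.
\item Your bridge bound treats the $2n$-valent vertex as imposing ``one relation.'' But flowed colorings are defined only after a trivalent expansion, and they are invariants of the handlebody class. The correct argument expands the vertex inside its own tangle, which preserves component counts. It then uses that one color determines a colored trivial tree, because the vertex colors are $x,\kappa_g(x),x$ with $\kappa_g$ bijective. Finally, the weight formula gives $m=2r+2n-2=2s$, so the vertex side has fewer components and no case split is needed.
\item In the overpass lower bound, the vertex may lie in the complementary forest rather than in an overpass tree. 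The bound survives because both forests have the same boundary points, so the weight formula applied to whichever forest contains the vertex gives at least $2n$.
\end{itemize}
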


The proof in Section~\ref{subsec:virtual-ravels} uses a trivalent
expansion with at least $2^{n+1}$ colorings for one fixed $\Z_2$-flow.
Keeping the contribution of the $2n$-valent bouquet vertex in the
component-weight formula converts this coloring growth into the
stated weighted bridge gap.  The ravel condition requires all
constituent knots and links to be trivial, while allowing nontrivial
proper graph subgraphs.

Our second construction concerns classical ravels.  We use the
compatible tetrahedral clasping drawn in
Figure~\ref{fig:tetrahedral-clasping}.  Each inserted clasp admits four
colorings with a prescribed common boundary color.  Placing the
clasps in disjoint balls makes these choices independent, while
supporting them on three distinct half-edges at one vertex preserves
the ravel property.  This gives non-Eulerian trivalent graphs at
each negative Euler characteristic without changing the abstract
graph during the iteration.
For $x\geq1$, let $\Lambda_x$ be the planar trivalent graph formed
by doubling alternate edges of a $2x$-cycle, with
$\Lambda_1=\theta_3$.  Thus $\Lambda_2$ is a square with two
opposite edges doubled and $\Lambda_3$ is a hexagon with three
alternate edges doubled.

\begin{thm}\label{thm:anyEuler}
\leavevmode
\begin{enumerate}
\item[(i)] For every $x,k\geq1$, there is a classical ravel
$G_{x,k}$ with underlying abstract graph $\Lambda_x$ such that
\[
 \chi(G_{x,k})=-x,\qquad b(G_{x,k})\geq x+2k+2.
\]
\item[(ii)] For each $r\geq2$, classical ravel embeddings of the
$r$-petal bouquet have arbitrarily large even bridge indices.
\item[(iii)] There are classical ravel $r$-petal bouquets $P_r$
with
\[
 b(P_r)=2r+2,\qquad b_{\mathrm{hb}}(N(P_r))\leq r+4,
\]
so $b(P_r)-b_{\mathrm{hb}}(N(P_r))\geq r-2\to\infty$.
\end{enumerate}
\end{thm}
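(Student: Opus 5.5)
The plan is to prove all three parts with one lower-bound mechanism: coloring counts in a biquandle or quandle attached to a fixed flow. The standard inequality is that if a spatial graph $G$ with a fixed $\Z_2$-flow admits at least $|X|^{m}$ colorings by a finite (bi)quandle $X$ compatible with a fixed boundary datum, then a bridge presentation with $b$ intersection points (weighted by tree components) forces $m\le b/2$ up to an additive constant coming from vertices. The reason is that a coloring is determined by its values on the bridges or pods of one side. I will take this counting lemma as the device already used for Theorem~\ref{thm:kishino-overpass}. On the classical side I will use the dihedral quandle $R_3$, or the $\Z_2$-family restricted to classical diagrams.

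For (i), I start from a fixed planar embedding of $\Lambda_x$. It has $2x$ trivalent vertices and $3x$ edges, so $\chi=-x$. I then insert $k$ copies of the tetrahedral clasping of Figure~\ref{fig:tetrahedral-clasping}, each in a disjoint ball, supported on three distinct half-edges at a single vertex. The ravel property needs checking: every cycle of $\Lambda_x$ uses at most two of the three half-edges at that vertex, so each constituent knot or link sees the clasp as a trivial tangle and stays unknotted or unlinked. The abstract graph is unchanged. For the count, each clasp contributes a factor of $4$ colorings with the prescribed common boundary color, and the disjoint balls make these factors independent. The planar part already contributes a base count matching bridge number $x+2$. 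Feeding the total into the counting lemma gives $b(G_{x,k})\ge x+2+2k$. I expect this to be the main obstacle: the additive constant in the counting lemma has to match exactly the vertex weights $2x\cdot 3/2$, and I have to verify that the fixed flow really makes the four clasp colorings independent of the colors elsewhere. I would first try choosing the flow so that each clasp's boundary strands all carry the same color, which is the stated common-color property.

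For (ii), I apply the same insertion to the $r$-petal bouquet. All clasps are placed at the single vertex on three of its $2r$ half-edges, chosen from distinct petals when $r\ge3$, and from the two ends of one petal plus one end of the other when $r=2$. This gives ravels whose lower bound grows linearly in $k$. Evenness comes for free, since the total weight (pod weight $2r$ plus $2$ per arc) is always even. To get arbitrarily large even values, and not just unbounded ones, I would add an upper bound that increases by exactly $2$ per clasp: each clasp can be absorbed by one extra bridge. Then $b$ takes every even value beyond some point.

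For (iii), $P_r$ is the member of the family in (ii) realizing $b=2r+2$. The lower bound comes from the colorings, and the matching upper bound comes from an explicit presentation with one extra bridge. To bound $b_{\mathrm{hb}}(N(P_r))$, I slide petals over one another (handle slides in $N(P_r)$) to obtain a handlebody-equivalent spine in which the clasps are concentrated on two edges of a spine with a low-valence vertex structure. That spine has an explicit bridge presentation with $r+4$ points.
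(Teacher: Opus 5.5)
\textbf{Parts (i) and (ii).} These follow the paper's route: $k$ clasps in disjoint collars of three half-edges at one vertex, a constant color $t$ outside the balls with four fillings in each ball (so $4^{k+1}$ colorings), then component counts and Euler characteristic. The problem is that the algebra you name would not run.

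The four boundary-constant colorings of $C$ are colorings by the tetrahedral quandle $T$. Nothing gives them for $R_3$ or for the $\Z_2$-family of $T_4$. Moreover, $R_3$ has type $2$, and a trivalent vertex has odd signed valence. So Proposition~\ref{prop:equal-vertex} does not apply there: a strand passing under the vertex is changed by $S_y^{\pm1}\neq\mathrm{id}$.

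With $T$ (type $3$) you need $3\mid\epsilon(v)$, i.e.\ every vertex is a source or a sink. That orientation exists precisely because $\Lambda_x$ is bipartite. For bouquets, orient each petal as a loop so that $\epsilon=0$. If you want a fixed flow, it is a $\Z_3$-flow. The flow plays no role in the independence of the clasp choices; that comes from the constant outside coloring.

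There is also no vertex-weight constant to match. Lemma~\ref{lem:trivial-coloring} gives $c_+,c_-\geq k+1$ on both sides, and $\chi=-x$ gives $|G_{x,k}\cap S|=x+c_++c_-$. (The vertex excess is $\sum(\deg v-2)=2x$, not $3x$.)

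In (ii), your extra claim that every large even value occurs rests on an unproved ``one bridge per clasp'' upper bound. It is not needed: evenness is automatic from \eqref{eq:bouquet-weight}, and unboundedness is the lower bound.

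\textbf{Part (iii) is the genuine gap.} With $P_r=C_{r,1}$ you prove neither $b(C_{r,1})\leq2r+2$ nor $b_{\mathrm{hb}}(N(P_r))\leq r+4$. ``Handle slides concentrating the clasps on two edges'' gives no reason for the count to drop by $r-2$.

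A trivalent spine of $N(P_r)$ meets the sphere in $r-1+c_++c_-$ points. So you need a spine in which each additional handle costs one intersection and creates no new components.

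The paper gets this by taking $P_r$ to be $B_2$ (with $b(B_2)=6$) with $r-2$ small planar petals adjoined at its vertex.
\begin{itemize}
\item Upper bound: two short legs and one boundary-parallel arc per new petal give $b(P_r)\leq2r+2$.
\item Lower bound: a single pod on the vertex side would make the bouquet planar, so $b(P_r)\geq2r+2$.
\item Handlebody bound: in $N(P_r)$ each small petal bounds a disk disjoint from everything else. After expanding the vertex of $B_2$, each petal can be re-spined as two parallel segments between trivalent junctions on opposite sides of the sphere, on a terminal branch. Each adds exactly one intersection, giving $6+(r-2)=r+4$.
\end{itemize}

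In your family, the petals passing through the clasp (three of them when $r\geq3$, by your choice of distinct petals) are not small planar petals. The one-intersection trick applies only to the remaining ones. To rescue your route you would have to support the clasp on two petals and verify a six-point presentation of that two-petal core.
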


The almost unknotted bouquet examples with bridge index $2r+2$
are retained in Proposition~\ref{prop:bouquet-family}.  The clasping
construction in Theorem~\ref{thm:anyEuler}(i)--(ii) also applies to
bouquets.  The ravel conclusion requires trivial constituent links;
it does not require deletion of every edge to make the whole graph
planar.

The final result measures the cost of requiring all vertices to lie
on the same side of a bridge sphere.
\begin{thm}\label{thm:main-gap}
For each $n\geq1$, the $n$-fold edge connected sum $G_n$ of
Suzuki's $\theta_4$-curve is a classical ravel satisfying
\[
 b(G_n)=4n+2,\qquad b_c(G_n)=6n+2.
\]
Consequently $b_c(G_n)-b(G_n)=2n\to\infty$.
\end{thm}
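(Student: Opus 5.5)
We prove the two equalities by matching upper bounds from explicit diagrams with lower bounds from coloring counts and an additivity argument.

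\textbf{Upper bounds.} Recall the convention that $b$ and $b_c$ count all intersection points with the bridge sphere, so a $d$-valent pod contributes $d$. For $G_1$, Suzuki's $\theta_4$-curve, we exhibit a bridge position with $b(G_1)=6$: one vertex lies above the sphere, the other below, and one vertical strand passes through for each of the four edges plus one extra pair. We then perform the edge connected sum at edges running between vertices on opposite sides. Each summand contributes a new pair of vertices on opposite sides, at a cost of $4$ additional intersections, which gives $b(G_n)\le 4n+2$.

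For the constrained index, all $2n$ trivalent vertices are pushed to one side. The standard constrained position of $\theta_4$ realizes $b_c(G_1)=8$. Summing along edges that run from the vertex side to the other side costs $6$ per additional summand, which gives $b_c(G_n)\le 6n+2$. The ravel property follows because each constituent knot of an edge connected sum of ravels is either a constituent of a summand or a connected sum of trivial knots. By Suzuki's theorem these are all trivial.

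\textbf{Lower bounds.} For $b(G_n)\ge 4n+2$, I would use a biquandle (or quandle) coloring bound of the form $\#\mathrm{Col}_X(G)\le |X|^{b(G)/2}$, suitably adapted to vertex weights. Concretely, we fix a dihedral or Alexander-type coloring of $\theta_4$ with a fixed flow, compute its number of colorings, and check that edge connected sum multiplies coloring counts by $|X|^{-1}$ times the product.

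The difficulty is that the naive coloring bound counts bridges rather than intersection points with vertex weights. A cleaner route is an Euler-characteristic or pod-counting inequality. A bridge sphere splits $G_n$ into trees, each tree with $d$ endpoints on the sphere. Counting vertices and edges gives $b\ge 2(\#\text{trivalent vertices})+2$ whenever the vertices are forced into distinct tree components. Since $G_n$ has $2n$ trivalent vertices, this yields $4n+2$ provided $G_n$ is not a trivial (planar) graph. Nontriviality, and hence the impossibility of merging components, follows from Suzuki's curve being nontrivial and from the coloring count.

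For $b_c$, all trivalent vertices lie on one side. The trees on that side then contain the $2n$ vertices, and each edge between two vertices must cross the sphere an even number of times. I would show that every edge of $G_n$ must meet the sphere at least twice, except that at most a bounded number may avoid it. Then $b_c \ge 2|E|+2 = 2\cdot 3n+2$, using $|E(G_n)|=3n$ for a trivalent graph with $2n$ vertices. An edge avoiding the sphere lies in a ball together with both of its endpoints. Contracting it then gives a constrained position of a graph with fewer edges, and we argue inductively using the ravel and nonsplit property.

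\textbf{Main obstacle.} The main obstacle is this last lower bound: proving that no edge, beyond the one "free" edge accounting for the $+2$, can be isotoped off the sphere. I would try to handle it with a minimal-counterexample argument, combined with the fact that each $\theta_4$ summand is nontrivial, so that a leftover trivial tangle would contradict Suzuki's theorem.
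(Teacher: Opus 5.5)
Your upper bounds are fine: they match the paper's gluing at bridge-sphere points, which gives $6n-2(n-1)$ and $8n-2(n-1)$. Both lower bounds fail, however, and the failure starts with a misidentification of the graph. Suzuki's $\theta_4$-curve has two vertices of valence four joined by four edges. So $G_n$ has $2n$ four-valent vertices, $4n$ edges and $\chi(G_n)=-2n$; it is not trivalent with $3n$ edges. With the correct count, your constrained argument would give $b_c(G_n)\geq 2\cdot 4n+2=8n+2$, which contradicts the $6n+2$ presentation you construct yourself. In a constrained splitting every edge meets the sphere an even number of times. So if $e_0$ edges miss it, then $2(4n-e_0)\leq 6n+2$, i.e.\ $e_0\geq n-1$. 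Optimal constrained positions \emph{must} therefore have unboundedly many edges disjoint from the sphere, and no minimal-counterexample argument can establish the opposite. Your pod-counting bound for $b$ is also unsupported. Nothing forces the vertices into distinct trees, and even granting that, Euler characteristic gives only $|G_n\cap S|=c_++c_-+2n\geq 4n$. Appealing to nontriviality of the summands does not by itself produce a component bound that grows with $n$.

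The missing idea is that the coloring bound you set aside is exactly what is needed. It controls the \emph{number of components} on each side, and Euler characteristic and the component-weight formula then convert that into weighted intersection counts. Use equal-vertex $R_3$-colorings, which are invariant because every valence is even. Put a splitting in the form crossing-free forest, braid, crossing-free forest. Each tree on one side carries a single color, these colors propagate uniquely through the braid, and each tree on the other side is determined by one boundary color. Hence $\#\operatorname{Col}_{R_3}(G_n)\leq 3^{c}$, where $c$ is the component count of \emph{either} side. Suzuki's curve has $9$ such colorings, and homogeneity of $R_3$ gives $\#\operatorname{Col}_{R_3}(G_n)\geq 9^n/3^{n-1}=3^{n+1}$; note this is a lower bound, not the equality you suggested. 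Thus $c_+,c_-\geq n+1$, so $|G_n\cap S|=c_++c_-+2n\geq 4n+2$. In a constrained splitting the vertex side has $s\geq n+1$ components, so its weight is $2s+\sum_v(\deg v-2)=2s+4n\geq 6n+2$.

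Finally, your ravel argument treats only constituent knots. For $n\geq 2$, cycles in different summands can be vertex-disjoint, so you must also show that constituent links are unlinks. One way: at most one cycle of a disjoint family crosses a decomposing sphere meeting $G_n$ in two points, so you can cap off on each side and glue the resulting unlinks. You must also prove $G_n$ is nonplanar, for example because $3^{n+1}>3$, the number of such colorings of a connected planar graph.
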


One motivation is the possibility that biquandle colorings detect
bridge complexity invisible to quandle colorings; compare
\cite{elhamdadi2025bridging,murao2018relationship}.  For this purpose
one must check both the crossing conventions and the vertex
relations, including colors on short semiarcs.  A second motivation
is that many invariants of trivalent graphs are invariants of their
regular neighborhoods.  Comparing $b(G)$ with
$b_{\mathrm{hb}}(N(G))$ measures how much bridge complexity can be
lost when changes of spine are allowed.

Geometric constraints also arise in models of entanglement in
lattice tubes.  Atapour and coauthors studied
linking probabilities for pairs of polygons spanning a tube
\cite{atapour2010linking}.  Bridge constraints in this setting
are developed further in \cite{blair2026entanglement}, where
some link types require nonminimal bridge conformations to fit
the prescribed tube.

Throughout the paper, graphs are finite and have no vertices of
valence one.  Vertices of valence two are suppressed unless they are
useful in describing a move.  These conventions include links by
regarding a link component as a graph with no vertices.

\section{Bridge splittings and component weights}\label{sec:bridge}

\begin{defn}
A \emph{virtual spatial graph diagram} is a generic immersion of a
finite graph in $\mathbb R^2$ whose double points are marked as
classical or virtual crossings.  A \emph{virtual spatial graph} is an
equivalence class of such diagrams under planar isotopy, the
classical graph Reidemeister moves, the virtual Reidemeister moves,
and the virtual vertex move shown in
Figure~\ref{fig:virtual-moves}; see \cite{flemingMellor,yokota1996}.

A \emph{virtual graph tangle diagram} is defined in the same way in a
closed half-plane, with some degree-one boundary vertices on its
boundary line.  Equivalence of tangles fixes those boundary points.
\end{defn}

\begin{figure}[htbp]
  \centering
  \includegraphics[width=.7\linewidth]{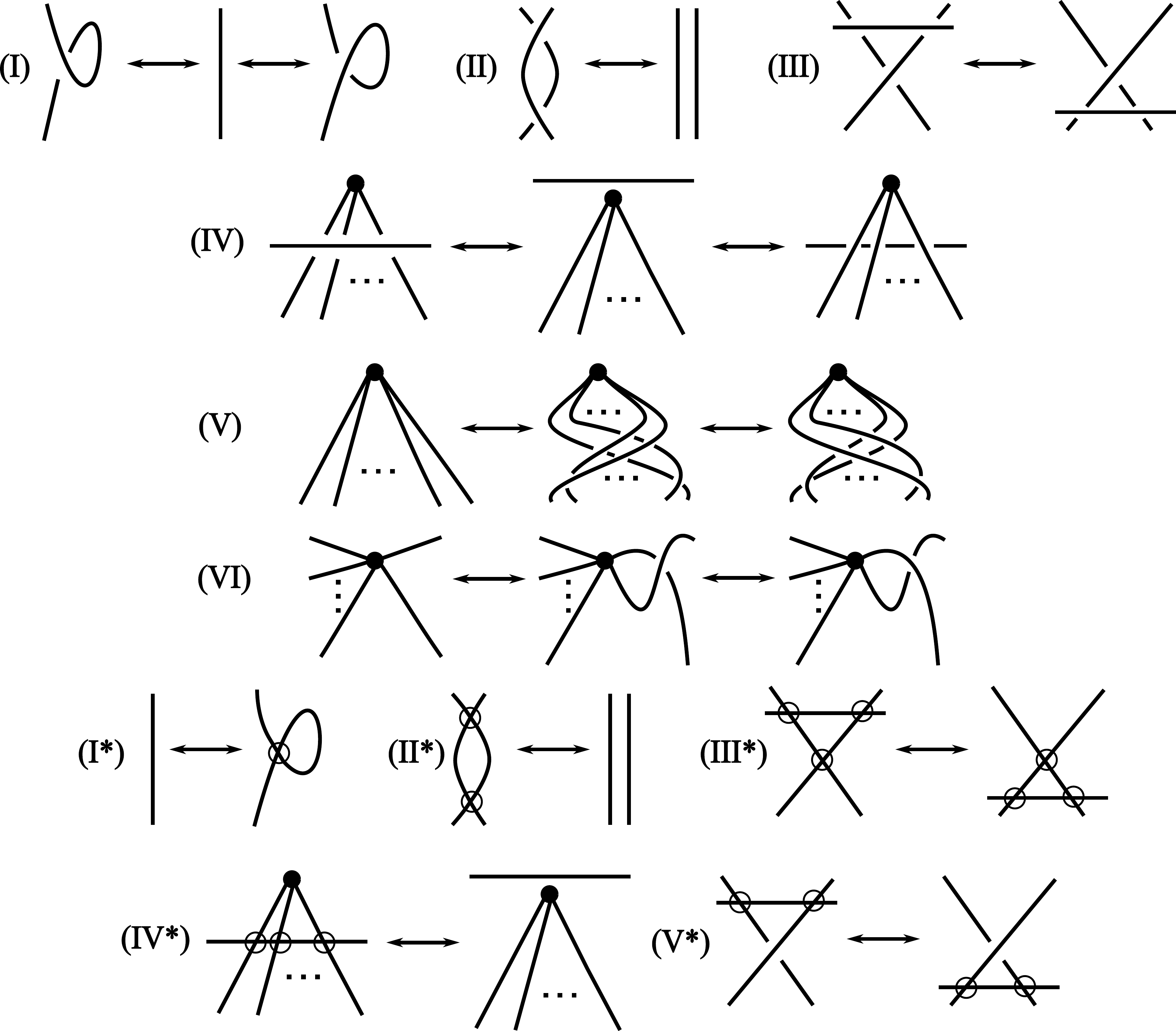}
  \caption{The moves for virtual spatial graph diagrams.}
  \label{fig:virtual-moves}
\end{figure}

A virtual braid may be stacked on a tangle at its boundary, as in
Figure~\ref{fig:stack}.  The reverse braid is obtained by reversing
the order of the crossings and changing every classical crossing to
its inverse.  A braid followed by its reverse cancels by classical
and virtual Reidemeister II moves; see Figure~\ref{fig:reverse-braid}.

\begin{figure}[htbp]
  \centering
  \includegraphics[width=.48\linewidth]{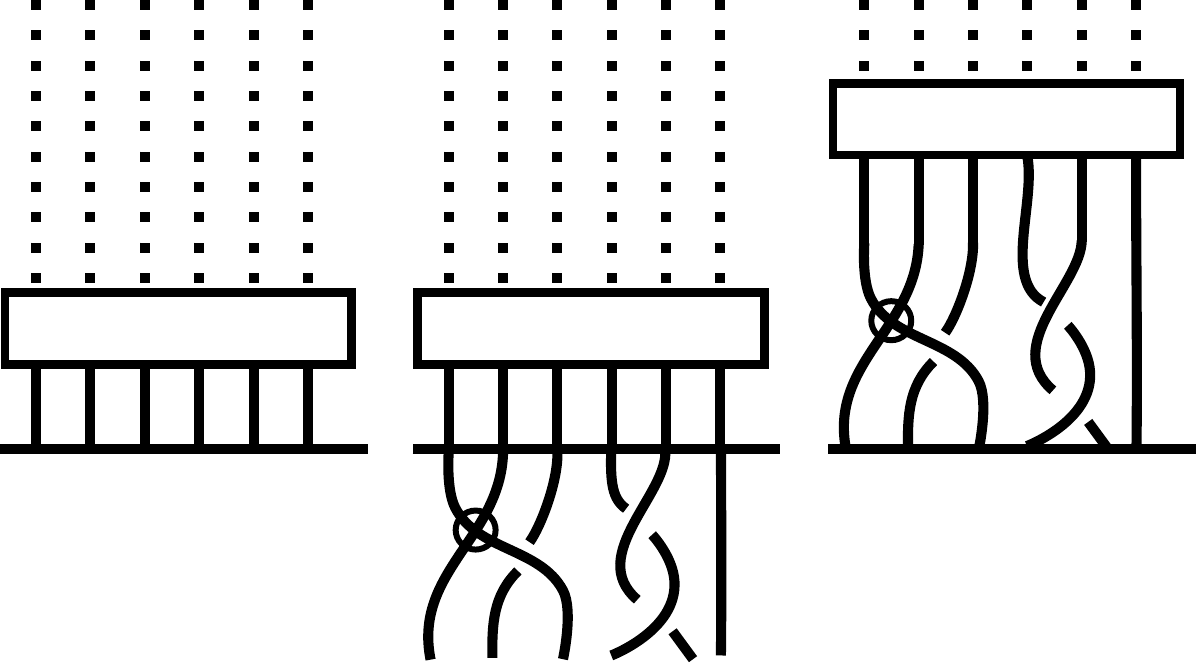}
  \caption{Stacking a virtual braid on a graph tangle.}
  \label{fig:stack}
\end{figure}

\begin{figure}[htbp]
  \centering
  \includegraphics[width=.16\linewidth]{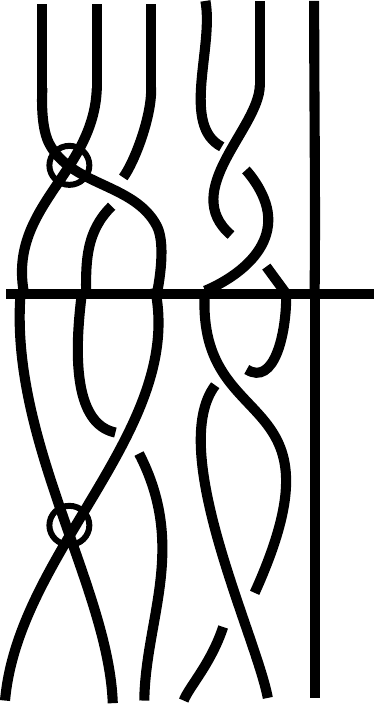}
  \caption{A braid followed by its reverse.}
  \label{fig:reverse-braid}
\end{figure}

\begin{defn}\label{def:trivial-tangle}
A virtual graph tangle $T$ is \emph{trivial} if every component of
$T$ is a tree or a proper arc and there is a virtual braid $\beta$
such that stacking $T$ with $\beta$ produces a crossingless tangle.
\end{defn}

The requirement that the components be trees or proper arcs is
essential.  In particular, a closed cycle contained in one half-plane
is not counted as a component of a trivial tangle.

\begin{defn}
Let $D$ be a virtual spatial graph diagram.  A horizontal line $\ell$
disjoint from the crossings and vertices is a \emph{bridge line} if
it cuts $D$ into two nonempty trivial tangles $T_+$ and $T_-$.  We
write $D=T_+\cup_\ell T_-$ and call this a \emph{bridge splitting}.
If $D$ is disconnected, every component of $D$ is required to meet
$\ell$.
\end{defn}

\begin{prop}\label{prop:existence}
Every virtual spatial graph has a bridge splitting.
\end{prop}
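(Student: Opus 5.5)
Starting from an arbitrary diagram $D$ of the virtual spatial graph, I will modify it by virtual moves until a horizontal bridge line appears, using the classical strategy of pushing all maxima above and all minima below a line. First apply a planar isotopy so that the height function is Morse on each edge, with vertices and crossings at distinct heights and no critical points at crossings. Next, replace each connected component of $D$ that contains no vertices (a link component) or that would otherwise miss the line by adding, if necessary, a small kink so that it has at least one maximum and one minimum. Then choose a horizontal level $\ell$ and, for every local maximum below $\ell$, drag the maximum up across $\ell$ along a vertical path. Whenever this path crosses another strand, declare the new crossing \emph{virtual}. Virtual Reidemeister II and mixed moves (the detour move) make this an equivalence, and the same holds for dragging minima down and vertices to either side. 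For vertices, the virtual vertex move is what allows a vertex, together with its incident half-edges, to be dragged through other strands via virtual crossings only. After this step every maximum and every vertex of ``upper type'' lies above $\ell$, and every minimum lies below it. In fact I will simply put all vertices above $\ell$, since that is allowed.

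The next task is to check that $T_+$ and $T_-$ are trivial in the sense of Definition~\ref{def:trivial-tangle}. Below $\ell$ there are now only minima and monotone arcs, so each component of $T_-$ is a proper arc with one minimum. Above $\ell$, each component contains vertices and maxima but no minima, so it contains no closed cycle. A cycle would require a minimum, because a cycle in the upper half-plane lying above $\ell$ must attain a lowest point in its interior, and that point would be either a minimum or a vertex. I must also exclude lowest points at vertices, since a vertex could be the bottom of a cycle. To handle this, I will first isotope each vertex so that it is a local maximum of every incident edge (all half-edges leave downward), which is achievable by a small planar isotopy near the vertex. With that arranged, every component of $T_+$ is a tree or a proper arc, and every component meets $\ell$ because it descends to $\ell$. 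Both tangles are nonempty because the original component had a minimum.

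It remains to find a virtual braid $\beta$ making each tangle crossingless. Here I will use the virtual category essentially. Since a virtual crossing can be created freely, I will first convert all classical crossings of $T_\pm$ into configurations that can be unwound. Concretely, in $T_-$ each arc is a ``cup''. Sweeping from $\ell$ downward, the crossings appear in order, so the portion of the diagram between $\ell$ and the cups reads as a braid word (classical and virtual generators) stacked on a crossingless collection of cups. Stacking with the reverse braid (Figure~\ref{fig:reverse-braid}) cancels it, giving a crossingless tangle. For $T_+$ the same works: the trees with downward half-edges form a level-preserving picture, so above $\ell$ I isotope the crossings to lie below all vertices and maxima, in a band adjacent to $\ell$. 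This is possible because the generic height function lets me slide crossings down along monotone strands. The band is then a braid, and the reverse braid cancels it.

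The main obstacle is this last sliding step in $T_+$. Crossings between two monotone strands cannot always be pushed below a vertex that sits between them in height, since a strand might pass under a tree. I expect to resolve this by pushing the trees' vertices and maxima to a common top level first, again using virtual crossings and the virtual vertex move. Once that is done, the region below that level consists only of monotone strands, which is exactly a braid.
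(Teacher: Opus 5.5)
Your argument works, and its main mechanism is the paper's: move critical points and vertices out of the crossing region so that each side becomes a crossingless forest stacked on a virtual braid, then cancel the braid with its reverse. You genuinely differ in how cycles are excluded.

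The paper lets vertices go to either side and places $\ell$ inside the crossing strip. It then breaks any remaining cycle on one side by pushing an open subarc across $\ell$, inducting on the first Betti number of that side. You instead normalize every vertex so that all its half-edges leave downward, do this before pushing minima down since it creates new minima, and put every vertex above $\ell$. A cycle in $T_+$ would then need a lowest point that is neither a local minimum (all of those are below $\ell$) nor a vertex, so $T_+$ is a forest. $T_-$, containing no vertices and no maxima, consists of cups.

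What each route buys:
\begin{itemize}
\item Your route replaces the induction by a one-line Morse argument. It also yields a splitting with all vertices in one tangle, so it shows directly that constrained splittings exist and $b_c(G)$ is finite. The extra minima created by the normalization are irrelevant for existence.
\item The paper's route keeps vertex placement free. Because its moves may be taken classical, it applies verbatim to classical spatial graphs.
\item Your finger moves, with all new crossings declared virtual, are valid detour and virtual vertex moves. They are tied to the virtual category, which is all the statement needs.
\end{itemize}

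Two steps should take the form of your last paragraph rather than your third.
\begin{itemize}
\item In $T_+$, crossings cannot in general be slid below vertices and maxima by planar isotopy. Choose a level $\ell'$ above every crossing of $T_+$ and drag all maxima and vertices above it by virtual finger moves. Then the strip between $\ell$ and $\ell'$ is a virtual braid and the part above $\ell'$ is crossingless.
\item The same is needed in $T_-$. Minima and crossings can interleave in height; for example, a cup may hang between two strands that cross beneath it. So your sweep does not yet present $T_-$ as a braid on crossingless cups. Push the minima below every crossing of $T_-$ by the same kind of move first.
\end{itemize}
With these adjustments, stacking the reverse braid finishes both sides as you describe.
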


\begin{proof}
Choose a generic height function on a diagram $D$.  By the usual
pulling-up and pulling-down procedure, move every local maximum above
all crossings and every local minimum below all crossings.  Move each
vertex either above or below the crossing region.  Passing a critical
point or vertex through another strand is accomplished by a
classical or virtual Reidemeister move; the local steps are indicated
in Figure~\ref{fig:pulling}.  Consequently, all crossings lie in one
horizontal strip, while the portions above and below that strip are
crossingless graphs followed by virtual braids.

Choose a horizontal line through the crossing strip.  If one side
contains a cycle, select an open subarc of that cycle and move it
across the line.  In the virtual category the new route is made with
detour moves; in the classical category the same operation is an
ambient isotopy in three-space.  On the original side this cuts the
cycle, and on the other side it adds a proper arc.  Each such operation
reduces the first Betti number of the graph on the original side.
After finitely many operations, both sides are forests together with
proper arcs.  The same operation ensures that every component of $D$
meets the line.

Each side is now a crossingless forest followed by a virtual braid.
Stacking the reverse braid cancels the braid crossings, so each side
is a trivial tangle in the sense of
Definition~\ref{def:trivial-tangle}.
\end{proof}

\begin{figure}[htbp]
  \centering
  \includegraphics[width=.5\linewidth]{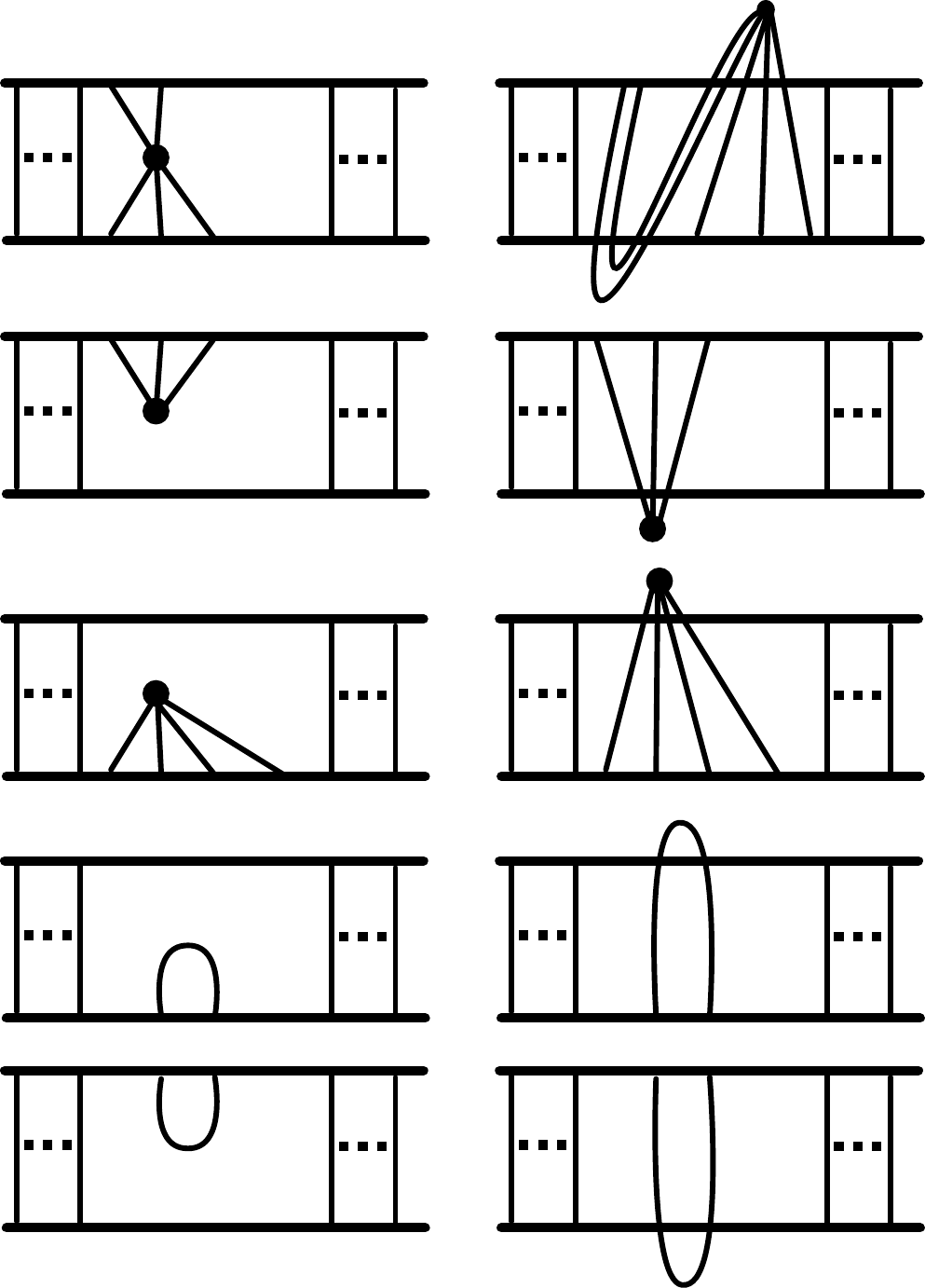}
  \caption{Moving extrema and vertices out of the crossing region.}
  \label{fig:pulling}
\end{figure}

For a component $P$ of a trivial tangle, let
$\omega(P)=|P\cap\ell|$ be its \emph{weight}.  The next elementary
formula is the bookkeeping used throughout the paper.

\begin{lem}[Component-weight formula]\label{lem:weight}
Let $P$ be a connected tree component of a trivial tangle, and let
$V(P)$ be the set of graph vertices in its interior.  Then
\begin{equation}\label{eq:component-weight}
       \omega(P)=2+\sum_{v\in V(P)}\bigl(\deg(v)-2\bigr).
\end{equation}
The same formula applies to a proper arc, with the empty sum.
Consequently, if $P_1,\ldots,P_s$ are the components of a trivial
tangle and together contain a set $W$ of graph vertices, then
\begin{equation}\label{eq:forest-weight}
 \sum_{i=1}^s\omega(P_i)
   =2s+\sum_{v\in W}\bigl(\deg(v)-2\bigr).
\end{equation}
\end{lem}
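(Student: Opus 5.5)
The plan is to compute $\omega(P)$ by a handshake (degree-sum) count on the finite tree obtained from $P$ by regarding each point of $P\cap\ell$ as a boundary vertex of valence one. By Definition~\ref{def:trivial-tangle}, a component $P$ of a trivial tangle is a tree or a proper arc, and its endpoints on the boundary line are exactly the degree-one boundary vertices, so $\omega(P)=|P\cap\ell|$ is the number of leaves. Valence-two vertices are suppressed, but they would contribute $\deg(v)-2=0$, so it does not matter whether we include them. The abstract tree $\widehat P$ then has vertex set $V(P)\sqcup(P\cap\ell)$. Its interior vertices have the graph degrees $\deg(v)$, since the vertices of $P$ lie in the interior of the half-plane, and its boundary vertices have degree one.

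For a finite tree we have $|E|=|V|-1$, and the handshake lemma gives $\sum_{u}\deg(u)=2|E|=2|V|-2$. This can be rewritten as $\sum_u(\deg(u)-2)=-2$. Splitting the sum into the $\omega(P)$ leaves, each contributing $-1$, and the interior vertices yields
\[
-\omega(P)+\sum_{v\in V(P)}(\deg(v)-2)=-2,
\]
which is \eqref{eq:component-weight}. A proper arc is a tree with two leaves and no interior vertices, so the formula gives $\omega=2$ with the empty sum, as claimed.

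For \eqref{eq:forest-weight}, sum \eqref{eq:component-weight} over the components $P_1,\ldots,P_s$. The sets $V(P_i)$ are disjoint, since distinct components share no vertices, and their union is $W$. This yields $2s+\sum_{v\in W}(\deg(v)-2)$.

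The only point needing care, and not a real obstacle, is checking that the degree of a graph vertex computed inside $P$ equals its degree in the spatial graph. This holds because every vertex lies strictly off $\ell$, as the bridge line avoids vertices, so all of its incident half-edges lie in the same tangle and in fact in the same component $P$. One also needs each component to be genuinely a tree, which is exactly the reason the definition forbids cycles, so that the tree edge count applies.
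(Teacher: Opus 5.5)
Your proposal is correct and follows essentially the same route as the paper: both regard the $\omega(P)$ boundary points as degree-one vertices of the finite tree $P$, combine the edge count $|E|=|V|-1$ with the handshaking lemma, and then sum over components using that the vertex sets $V(P_i)$ are disjoint. Your added remarks, that suppressed bivalent vertices contribute zero and that each vertex's degree in $P$ equals its graph degree because $\ell$ avoids vertices, are harmless clarifications of points the paper leaves implicit.
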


\begin{proof}
Regard the $\omega(P)$ boundary points as degree-one vertices of the
finite tree $P$.  If $I=|V(P)|$, then $P$ has
$I+\omega(P)-1$ edges.  The handshaking lemma gives
\[
 \sum_{v\in V(P)}\deg(v)+\omega(P)
       =2\bigl(I+\omega(P)-1\bigr).
\]
Rearranging proves the first formula.  Summing it over the components
proves the second.  A proper arc has two boundary points, so it is the
case in which $V(P)$ is empty.
\end{proof}

Since all graph vertices have valence at least two, every trivial
tangle component has weight at least two.

\begin{defn}
The \emph{bridge index} (or \emph{bridge number}) of a virtual spatial graph $G$
is
\[
 b(G)=\min_{D=T_+\cup_\ell T_-}|D\cap\ell|,
\]
where the minimum is over all bridge splittings of all diagrams of
$G$.  The \emph{constrained bridge index} $b_c(G)$ is the same
minimum with the additional requirement that all graph vertices lie
in the same one of the two tangles.
\end{defn}
\begin{rem}
We count all bridge-surface intersections.  Authors who divide this
number by two recover the usual classical bridge number on knots and
links.  In particular, our $b$ is twice the bridge index denoted by
$\beta$ in \cite{blackwell2026}.
\end{rem}
Plainly $b(G)\leq b_c(G)$.  For a virtual knot, $b(G)$ is twice the second
bridge index of Nakanishi and Satoh \cite{nakanishiSatoh}.  The
classical spatial-graph definition is closely related to bridge
positions studied by Ozawa and by Taylor--Tomova
\cite{ozawa2012,taylorTomova}.

\begin{defn}\label{def:virtual-handlebody}
Two virtual spatial graph diagrams are \emph{virtual-handlebody
equivalent} if they are related by virtual spatial-graph Reidemeister
moves, IH moves, and the following vertex moves and their inverses:
an $n$-valent vertex may be expanded into a planar trivalent tree with
the same cyclically ordered $n$ leaves, or such a tree may be
contracted to an $n$-valent vertex.  A \emph{virtual handlebody-link}
is an equivalence class for these moves; write $[G]_{\mathrm{hb}}$ for
the class represented by $G$.  For such a class $H$, define
\[
 b_{\mathrm{hb}}(H)
   =\min\{\,b(G') : G'\text{ is a trivalent representative of }H\,\}.
\]
In the classical category, we use the same notation for the usual
handlebody-link represented by the regular neighborhood $N(G)$.
\end{defn}

\begin{lem}[Vertex expansions in bridge splittings]
\label{lem:handlebody-tangles}
Every bridge splitting admits a trivalent expansion of its vertices
with the same intersection count and the same component counts.
More generally, a planar tree replacement made in either
crossing-free forest of its braid presentation preserves these
counts and the triviality of the tangles.
\end{lem}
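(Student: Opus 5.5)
Given a bridge splitting $D=T_+\cup_\ell T_-$, I would first use Definition~\ref{def:trivial-tangle} to put each side in normal form: a virtual braid $\beta_\pm$ stacked on a crossingless forest $F_\pm$ of trees and proper arcs. This is the braid presentation mentioned in the statement. Since stacking the reverse braid cancels $\beta_\pm$ by classical and virtual Reidemeister~II moves (Figure~\ref{fig:reverse-braid}), every graph vertex of $T_\pm$ may be taken to lie in the crossing-free part $F_\pm$. I would then perform every vertex modification inside $F_\pm$, away from all crossings and away from $\ell$.

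For the expansion step, I would take a vertex $v$ of degree $d\geq4$ in $F_+$, say. I would replace a small disk neighborhood of $v$ by a planar trivalent tree with the same cyclically ordered $d$ leaves. Because this tree is planar and lies inside a disk meeting nothing else of the diagram, the new $F_+$ is still a crossingless forest. The expanded tree component is still a tree, since a tree with a vertex replaced by a tree is a tree. Stacking the same braid $\beta_+$ therefore still yields a crossingless tangle, so the new $T_+$ is trivial. The number of components is unchanged because the replacement is connected and preserves the leaves. The intersection with $\ell$ is untouched because the modification is supported away from $\ell$. As a consistency check, Lemma~\ref{lem:weight} predicts this count: $d-2=\sum(3-2)$ over the $d-2$ trivalent vertices of a trivalent tree with $d$ leaves. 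The weight $\omega(P)$ of the affected component is therefore preserved, and by \eqref{eq:forest-weight} so is the total.

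Iterating over all vertices of degree at least four gives the trivalent expansion. The result is a diagram handlebody-equivalent to $D$ in the sense of Definition~\ref{def:virtual-handlebody}, so it is a trivalent representative of $[D]_{\mathrm{hb}}$. For the general statement, a planar tree replacement inside a component of $F_\pm$ is either an expansion, a contraction, or an IH move within a disk. The same three facts apply to each: the component stays a tree, crossinglessness holds in the disk, and the boundary points are fixed. The nonemptiness requirement and the requirement that each diagram component meets $\ell$ are unaffected.

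The main obstacle is the first step: justifying that vertices can be assumed to sit in the crossing-free forest rather than among braid crossings. Definition~\ref{def:trivial-tangle} only asserts that some braid trivializes the stacked tangle. I would argue instead that the triviality braid $\beta$ makes $T\cdot\beta$ crossingless. The replacement can be made inside a disk of $T\cdot\beta$, then pushed back by stacking $\beta^{-1}$. This is valid because a disk neighborhood of a vertex commutes with the Reidemeister~II cancellations used to remove $\beta\beta^{-1}$.
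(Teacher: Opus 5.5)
Your proposal is correct and follows essentially the same route as the paper. Both arguments use the trivializing braids to pass to two crossing-free forests separated by a braid. Each vertex pod is replaced, inside a small disk of its forest, by a planar trivalent tree with the same leaves, so each component stays a tree, the forest stays crossing-free, and no boundary point moves. The same middle braid is then put back. Your weight-formula consistency check and your explicit handling of the replacement in $T\cdot\beta$ before restacking the reverse braid add detail that the paper leaves implicit; the paper instead observes directly that the first Betti number is unchanged.
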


\begin{proof}
Use the trivializing braids to express the splitting as two
crossing-free forests separated by a braid.  At each vertex choose
a small disk meeting its forest in a pod, and replace that pod by a
planar trivalent tree with the same boundary leaves.  This changes
neither the connections to the rest of the component nor its first
Betti number: a replacement with $t$ vertices adds $t-1$ vertices
and $t-1$ edges.  Each component therefore remains a tree, and the
forest remains crossing-free.  No boundary endpoint is changed.

Put back the same middle braid.  The resulting tangles are trivial
by definition and have the original component and intersection
counts.  The same argument applies to a planar IH replacement or
contraction in a small disk of either crossing-free forest.
\end{proof}

\begin{cor}\label{cor:handlebody-comparison}
For every virtual spatial graph $G$,
\[
                         b_{\mathrm{hb}}([G]_{\mathrm{hb}})\leq b(G).
\]
In the classical category this reads
\[
                         b_{\mathrm{hb}}(N(G))\leq b(G).
\]
\end{cor}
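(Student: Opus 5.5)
The plan is to start from a minimal bridge splitting of $G$ and convert it, without changing the intersection count, into a bridge splitting of a trivalent representative of $[G]_{\mathrm{hb}}$.

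Choose a diagram $D$ of $G$ and a bridge splitting $D=T_+\cup_\ell T_-$ realizing $b(G)=|D\cap\ell|$. Apply Lemma~\ref{lem:handlebody-tangles} to this splitting. It replaces every vertex of valence at least four by a planar trivalent tree with the same cyclically ordered leaves. The replacement is made inside a small disk in the crossing-free forest of the braid presentation of the tangle containing that vertex. The result is a diagram $D'$ with a bridge splitting $D'=T'_+\cup_\ell T'_-$ in which $T'_\pm$ are still trivial and $|D'\cap\ell|=|D\cap\ell|$.

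Two facts then need checking.

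\emph{$D'$ is trivalent.} Vertices of valence two are suppressed by convention, so they can be absorbed or treated as interior points of edges. Valence-one vertices are excluded. Vertices of valence three are left alone, and higher-valence vertices become trivalent trees.

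\emph{$D'$ represents the same class as $G$.} I would argue this in two steps. First, the forest-level replacement corresponds to performing the vertex expansion of Definition~\ref{def:virtual-handlebody} on the diagram obtained by stacking $T_\pm$ with their trivializing braids. Second, undoing the braids uses only virtual spatial-graph Reidemeister moves. Together these show that $D'$ is virtual-handlebody equivalent to $D$, so $[D']_{\mathrm{hb}}=[G]_{\mathrm{hb}}$.

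The one point needing care is that the expansion in the forest preserves the cyclic order of leaves as seen in the original diagram. It does, because the expansion is performed in a small disk around the vertex, and the braid stacking is an isotopy away from that disk.

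Hence $b(D')\le|D'\cap\ell|=b(G)$. Taking the minimum over trivalent representatives gives $b_{\mathrm{hb}}([G]_{\mathrm{hb}})\le b(G)$.

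In the classical category, the same argument applies with classical moves. A trivalent expansion of a spatial graph is another spine of $N(G)$, so its neighborhood is isotopic to $N(G)$. This gives $b_{\mathrm{hb}}(N(G))\le b(G)$.

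I expect no serious obstacle; the only subtle step is checking that the expanded diagram represents the same handlebody class, which is already contained in the proof of Lemma~\ref{lem:handlebody-tangles}.
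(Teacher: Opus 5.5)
Your proposal is correct and follows the paper's proof essentially verbatim. You take a splitting realizing $b(G)$, expand each higher-valence vertex inside the crossing-free forest of its tangle via Lemma~\ref{lem:handlebody-tangles}, restore the trivializing braids, and minimize over trivalent representatives of $[G]_{\mathrm{hb}}$. Your extra concern about preserving the cyclic order seen in the original diagram is unnecessary: the expansion is an allowed move applied directly to the forest-plus-braid diagram, which is already equivalent to $D$.
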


\begin{proof}
Take a bridge splitting of $G$ realizing $b(G)$ and expand every
higher-valence vertex inside the tangle that contains it.  By
Lemma~\ref{lem:handlebody-tangles}, the resulting trivalent
representative $G'$ has a bridge splitting with no more intersection
points than the original one.  Since $G'$ represents
$[G]_{\mathrm{hb}}$, minimizing over all representatives gives the
result.  The expansions are made in the crossing-free forests
before restoring their trivializing braids.
\end{proof}

\begin{defn}[Unweighted bridge number]\label{def:unweighted-bridge}
For a bridge splitting $D=T_+\cup_\ell T_-$ of a virtual spatial
graph $\Gamma$, set
\[
 \widehat b(D,\ell)
   =\min\{\,|\pi_0(T_+)|,|\pi_0(T_-)|\,\}.
\]
Define $\widehat b(\Gamma)$ by minimizing over bridge splittings of
diagrams of $\Gamma$, and define
\[
 \widehat b(H)
   =\min\{\,\widehat b(\Gamma):
                \Gamma\text{ is a trivalent representative of }H\,\}
\]
for a virtual handlebody-link $H$.  Circle components with no vertices
are allowed in a trivalent representative.
\end{defn}

Write $\chi(H)=\chi(\Gamma)$ for any spine
representing $H$; the allowed spine moves preserve this number.
If $r=|\pi_0(T_+)|$, $s=|\pi_0(T_-)|$, and $m=|D\cap\ell|$, then
Euler characteristic gives
\[
 \chi(H)=r+s-m.
\]
Consequently,
\[
 2\widehat b(D,\ell)\leq r+s
                    =m+\chi(H).
\]
Applying this to a splitting that realizes $b_{\mathrm{hb}}(H)$ gives
\begin{equation}\label{eq:unweighted-half-intersection}
 b_{\mathrm{hb}}(H)\geq2\widehat b(H)-\chi(H).
\end{equation}

\section{Flowed families of virtual biquandle colorings}
\label{sec:coloring}

We next formulate a coloring invariant that retains both a flow label
and a nontrivial rule at a virtual crossing.  The classical part is
the $G$-family formalism of Ishii--Nelson and Murao
\cite{ishiiNelson,murao2018}.  We require the virtual crossing rule
to be compatible with the vertex rule as well as the virtual-link
moves \cite{fenn2004,kauffmanManturov}.  We
write the flow group as $A$ to avoid confusing it with a spatial
graph.

\begin{defn}
Let $A$ be a finite abelian group, written additively.  An
\emph{$A$-family of biquandles} is a nonempty set $X$ with operations
\[
 \ustar^{\,a},\ostar^{\,a}:X\times X\longrightarrow X
 \qquad(a\in A)
\]
such that every sideways switch
\[
 S_{a,b}(x,y)=\bigl(y\ostar^{\,a}x,x\ustar^{\,b}y\bigr)
\]
is a bijection, each right translation
$x\mapsto x\ustar^{\,a}y$ and $x\mapsto x\ostar^{\,a}y$ is a
bijection, and the following identities hold for all $a,b\in A$
and $x,y,z\in X$:
\begin{align}
 x\ustar^{\,a}x&=x\ostar^{\,a}x,                                      \tag{F1}\label{eq:F1}\\
 x\ustar^{\,a+b}y
  &=(x\ustar^{\,a}y)\ustar^{\,b}(y\ustar^{\,a}y),                    \tag{F2u}\label{eq:F2u}\\
 x\ostar^{\,a+b}y
  &=(x\ostar^{\,a}y)\ostar^{\,b}(y\ostar^{\,a}y),                  \tag{F2o}\label{eq:F2o}\\
 x\ustar^{\,0}y&=x,\qquad x\ostar^{\,0}y=x,                         \tag{F0}\label{eq:F0}
\end{align}
and the three exchange laws
\begin{align}
 (x\ustar^{\,a}y)\ustar^{\,b}(z\ostar^{\,a}y)
   &=(x\ustar^{\,b}z)\ustar^{\,a}(y\ustar^{\,b}z),                 \tag{E1}\label{eq:E1}\\
 (x\ostar^{\,a}y)\ustar^{\,b}(z\ostar^{\,a}y)
   &=(x\ustar^{\,b}z)\ostar^{\,a}(y\ustar^{\,b}z),                 \tag{E2}\label{eq:E2}\\
 (x\ostar^{\,a}y)\ostar^{\,b}(z\ostar^{\,a}y)
   &=(x\ostar^{\,b}z)\ostar^{\,a}(y\ustar^{\,b}z).                 \tag{E3}\label{eq:E3}
\end{align}
\end{defn}

\begin{lem}[Diagonal maps]\label{lem:diagonal-maps}
For $a\in A$, put
\[
 \kappa_a(x)=x\ustar^{\,a}x=x\ostar^{\,a}x.
\]
Then $\kappa_{a+b}=\kappa_b\circ\kappa_a$,
$\kappa_0=\operatorname{id}_X$, and
$\kappa_a^{-1}=\kappa_{-a}$.
\end{lem}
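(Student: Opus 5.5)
First note that the two descriptions of $\kappa_a$ agree, so $\kappa_a$ is well defined: axiom \eqref{eq:F1} gives $x\ustar^{\,a}x=x\ostar^{\,a}x$ for every $x$. The identity $\kappa_0=\operatorname{id}_X$ is then immediate from \eqref{eq:F0}, since $\kappa_0(x)=x\ustar^{\,0}x=x$.

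For the composition law I will specialize \eqref{eq:F2u} to $y=x$:
\[
 x\ustar^{\,a+b}x=(x\ustar^{\,a}x)\ustar^{\,b}(x\ustar^{\,a}x).
\]
The left side is $\kappa_{a+b}(x)$. On the right, both the element being acted on and the acting element equal $\kappa_a(x)$, so the right side is $\kappa_b(\kappa_a(x))$. This gives $\kappa_{a+b}=\kappa_b\circ\kappa_a$. The same computation with \eqref{eq:F2o} gives the same conclusion, which is consistent with \eqref{eq:F1} but not needed.

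For the inverse, apply the composition law with $b=-a$ and then with the roles of $a$ and $-a$ exchanged:
\[
 \kappa_{-a}\circ\kappa_a=\kappa_0=\operatorname{id}_X,
 \qquad
 \kappa_a\circ\kappa_{-a}=\kappa_0=\operatorname{id}_X.
\]
These use only that $A$ is an abelian group, so that $a+(-a)=(-a)+a=0$. Hence $\kappa_a$ is a bijection with inverse $\kappa_{-a}$.

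There is no real obstacle here. The only point needing care is reading \eqref{eq:F2u} correctly at $y=x$: the second argument must become $x\ustar^{\,a}x$, which is again $\kappa_a(x)$. Bijectivity of $\kappa_a$ is obtained from the group law, not from the bijectivity axioms for translations or for the sideways switch.
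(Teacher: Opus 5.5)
Your proof is correct and follows the paper's argument: set $y=x$ in \eqref{eq:F2u}, use \eqref{eq:F1} and \eqref{eq:F0}, and then take $b=-a$ in both orders to obtain the two inverse identities. The inverse step uses only the group law, so commutativity of $A$ is not actually needed there.
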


\begin{proof}
Set $y=x$ in \eqref{eq:F2u}, and use \eqref{eq:F1} and
\eqref{eq:F0}.  Taking $b=-a$ and then interchanging $a$ and $b$
gives both inverse identities.
\end{proof}

\begin{defn}[Virtual crossing compatibility]\label{def:virtual-family}
Set $Q=X\times A$.  To distinguish the modern sideways convention
from a braid convention, write the incoming-pair crossing map as
\[
 \mathcal R((x,a),(z,b))=((y,b),(x\ustar^{\,b}y,a)),
 \qquad z=y\ostar^{\,a}x.
\]
The right-translation axiom determines $y$ uniquely from $x,z$.
Thus $x$ is the incoming under-color, $z$ is the incoming over-color,
and $y$ is the outgoing over-color.  The partially defined vertex
multiplication is
\[
 \mu((x,a),(\kappa_a(x),b))=(x,a+b).
\]
An \emph{$A$-family of virtual biquandles}, for use with the
handlebody moves in this paper, is an $A$-family of biquandles
equipped with a virtual switch $\mathcal V:Q^2\to Q^2$ that swaps
the two flow labels.  Write $\mathcal V=(\mathcal V_1,\mathcal V_2)$.
Nondegeneracy means that, for each fixed $p\in Q$, the maps
$q\mapsto\mathcal V_1(p,q)$ and
$q\mapsto\mathcal V_2(q,p)$ are bijections.  Require
$\mathcal V^2=\operatorname{id}_{Q^2}$ and require both coordinate
projections $\operatorname{Fix}(\mathcal V)\to Q$ to be bijective;
the latter is the unique-extension relation for a virtual curl.
For a map $F:Q^2\to Q^2$, set
$F_{12}=F\times\operatorname{id}_Q$ and
$F_{23}=\operatorname{id}_Q\times F$.  The other virtual-link
relations are
\[
 \mathcal V_{12}\mathcal V_{23}\mathcal V_{12}
   =\mathcal V_{23}\mathcal V_{12}\mathcal V_{23},
 \qquad
 \mathcal V_{12}\mathcal V_{23}\mathcal R_{12}
   =\mathcal R_{23}\mathcal V_{12}\mathcal V_{23},
\]
with maps composed from right to left.  Reversed strand orientations
use the equations solved for the corresponding pair of semiarcs.

In addition, the virtual switch must satisfy the vertex identities
\begin{align}
 \mathcal V\circ(\mu\times\operatorname{id})
   &=(\operatorname{id}\times\mu)
      \circ(\mathcal V\times\operatorname{id})
      \circ(\operatorname{id}\times\mathcal V),
       \label{eq:virtual-vertex-left}\\
 \mathcal V\circ(\operatorname{id}\times\mu)
   &=(\mu\times\operatorname{id})
      \circ(\operatorname{id}\times\mathcal V)
      \circ(\mathcal V\times\operatorname{id}),
       \label{eq:virtual-vertex-right}
\end{align}
as equalities of partially defined maps, including equality of their
domains, and with the corresponding identities for the reversed
Y-orientations.  Thus passing a strand virtually across a vertex
preserves colorings, with unique extension and unchanged exterior
colors.  Taking $\mathcal V(p,q)=(q,p)$ always gives such a virtual
extension.
\end{defn}

The usual virtual-link switch
$V(x,y)=(v^{-1}(y),v(x))$ associated with an automorphism $v$ does
not automatically satisfy the vertex identities.  In particular,
$v^3=\operatorname{id}_X$ is not a substitute for
\eqref{eq:virtual-vertex-left}--\eqref{eq:virtual-vertex-right}.
The identity choice $v=\operatorname{id}_X$, used in the examples
below, gives the flip switch and satisfies these identities.
Indeed, for a vertex with zero flow on all three incident edges,
$\mu((x,0),(x,0))=(x,0)$.  Using the same automorphism switch at
every virtual crossing, the two sides of
\eqref{eq:virtual-vertex-left} give $v^{-1}(z)$ and $v^{-2}(z)$
on the passing strand.  Equality for every $z$ forces
$v=\operatorname{id}_X$.  A nontrivial virtual extension therefore
requires a different compatible switch.

A \emph{Y-orientation} directs every edge so that no trivalent
vertex is a source or a sink.  An $S^1$-orientation of a classical
handlebody-link specifies orientations only on the cores of its
genus-one components; a component of higher genus carries no
additional orientation data \cite{murao2018}.  A Y-oriented spine
represents this data through its oriented circle components.
We use the corresponding convention for virtual handlebody classes.
Thus the connected examples of genus at least two below carry no
additional orientation restriction in the minimum defining their
handlebody bridge indices.

\begin{defn}
Let $D$ be a Y-oriented virtual trivalent graph diagram.  An
\emph{$A$-flow} is an assignment $\phi$ of an element of $A$ to every
oriented edge such that at each vertex $w$,
\[
 \sum_{e\in E_{\rm in}(w)}\phi(e)
       =\sum_{e\in E_{\rm out}(w)}\phi(e).
\]
The label follows its edge through classical and virtual crossings.
We denote the set of flows by
\[
                              \Flow(D;A).
\]
\end{defn}

Cut the diagram at every crossing and vertex; the resulting pieces
are its semi-arcs.  Fix an $A$-flow $\phi$.  At the reference positive
classical crossing, let the incoming under-semi-arc have color $x$
and flow $a$, and let the \emph{outgoing} over-semi-arc have color
$y$ and flow $b$.  The remaining colors are
\[
 \text{outgoing under: }x\ustar^{\,b}y,
 \qquad
 \text{incoming over: }y\ostar^{\,a}x.
\]
Equivalently, an incoming pair is propagated by $\mathcal R$ above.
This is the modern sideways convention of
\cite{ishiiNelson}; both over-semi-arcs must be retained.
The opposite crossing uses the inverse incoming-pair map, and other
orientations are obtained by solving the same local equations.
At a Y-oriented
vertex with incoming flows $a,b$ and outgoing flow $a+b$, the three
colors, in the order left incoming, right incoming, outgoing, are
\[
                  x,\qquad x\ostar^{\,a}x,\qquad x.
\]
The reverse Y-orientation uses the same rule backward.  Finally, at a
virtual crossing the pair of color-and-flow labels changes by
$\mathcal V$.
An assignment satisfying these rules is an \emph{$X$-coloring} of
$(D,\phi)$; its set is denoted
$\operatorname{Col}_{X}(D,\phi)$, with $\mathcal V$ understood.

\begin{prop}\label{prop:virtual-invariance}
Let $X$ be a finite $A$-family of virtual biquandles in the sense of
Definition~\ref{def:virtual-family}.  Every sequence of oriented
classical and virtual graph Reidemeister moves and IH moves induces
a bijection of flow sets and, for each pair of corresponding flows,
a bijection of fixed-flow coloring sets.
\end{prop}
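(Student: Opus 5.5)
The proof is local: check each move separately, using the fact that the flow and coloring rules are local relations on semi-arcs. A move replaces a disk $B$ of the diagram by a disk $B'$ with the same boundary points, and the semi-arcs outside $B$ are the same on both sides. It therefore suffices to prove, for each move, two things. First, every assignment of flows to the boundary edges that extends over $B$ also extends over $B'$, and each extension is unique. Second, for a fixed boundary flow, every boundary coloring that extends over $B$ extends uniquely over $B'$, and conversely. Composing these local bijections along a sequence of moves gives the global bijections. Flows are handled first, since they are additive labels carried unchanged through crossings. For Reidemeister I--III and the virtual moves, each interior edge is a continuation of a boundary edge, so the flow is determined. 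For the vertex moves (classical Reidemeister V, the virtual vertex move, and IH), conservation at the vertices forces the interior flows from the exterior ones. In the IH move the single interior edge carries the sum of the two incoming flows on either side, so flows correspond bijectively.

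Next I treat the classical moves for colorings. Reidemeister II and III are handled by the bijectivity of the sideways switch $S_{a,b}$ and of the right translations, and by the exchange laws \eqref{eq:E1}--\eqref{eq:E3}. These are exactly the $A$-family biquandle axioms of \cite{ishiiNelson}, and I will cite that verification rather than redo it, noting only that the flow on each strand enters as the exponent. Reidemeister I uses \eqref{eq:F1}, the diagonal map $\kappa_a$, and Lemma~\ref{lem:diagonal-maps}. The classical vertex moves (passing a strand over or under a Y-vertex, and the twist at a vertex) reduce to \eqref{eq:F2u}, \eqref{eq:F2o} and \eqref{eq:F0}, as in \cite{murao2018}. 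The vertex rule $x,\ x\ostar^{\,a}x,\ x$ is designed so that the partial multiplication $\mu$ commutes with crossing a strand. For the IH move, the two trivalent trees with a fixed boundary flow both force all boundary colors to be determined by one color $x$ through iterated $\kappa$'s, and the additivity $\kappa_{a+b}=\kappa_b\kappa_a$ shows that the two trees impose the same boundary constraints, each with a unique interior extension.

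For the virtual moves, the virtual Reidemeister II move is $\mathcal V^2=\mathrm{id}$ together with nondegeneracy, which handles the sideways orientations. The virtual curl is the bijectivity of $\operatorname{Fix}(\mathcal V)\to Q$. Virtual Reidemeister III is the braid relation for $\mathcal V$, and the mixed move is $\mathcal V_{12}\mathcal V_{23}\mathcal R_{12}=\mathcal R_{23}\mathcal V_{12}\mathcal V_{23}$. Each relation is stated for one orientation, and I would get the others by the convention in Definition~\ref{def:virtual-family} of solving the same equations for the appropriate pair of semi-arcs. The virtual vertex move is precisely \eqref{eq:virtual-vertex-left}--\eqref{eq:virtual-vertex-right}, read as equalities of partial maps including their domains. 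Equality of domains gives existence of extensions in both directions, and equality of values gives matching exterior colors. I expect the main obstacle to be orientation bookkeeping: checking that every oriented version of each move, including reversed Y-orientations and over- or under-strands running against the reference direction, really reduces to one of the listed identities. I would handle this by the standard reduction that any oriented Reidemeister III and vertex move is generated by one reference version together with oriented Reidemeister I and II moves, as in Polyak's generating set. That reduces the verification to the reference cases above.
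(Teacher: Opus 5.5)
Your proposal is correct and follows essentially the paper's route: a local move-by-move check, deferring the classical crossing and vertex moves to the cited $A$-family verifications \cite{ishiiNelson,murao2018}, using the axioms of Definition~\ref{def:virtual-family} for the virtual, mixed and virtual vertex moves, and using $\kappa_{a+b}=\kappa_b\circ\kappa_a$ for the IH move. One caution on your Polyak-type remark: oriented Reidemeister I and II moves never turn a merge vertex into a split vertex, so a single reference vertex move cannot generate all oriented vertex moves; the reversed Y-orientations still need their own checks, which the cited propositions and the reversed-orientation clauses of Definition~\ref{def:virtual-family} supply, just as in the paper.
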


\begin{proof}
For the classical crossing and vertex moves, the required bijections
are those obtained from the partially multiplicative biquandle in
\cite[Propositions~5.2 and~5.4]{ishiiNelson}: the Reidemeister I
and II moves use
\eqref{eq:F1}--\eqref{eq:F0} and invertibility of the switches, and
the Reidemeister III move uses the three exchange laws
\eqref{eq:E1}--\eqref{eq:E3}.  The same cited propositions verify
both sliding a strand past a classical vertex and twisting two
adjacent vertex branches.  These checks use the partial
multiplication $\mu$, including its domain.

The virtual and mixed relations in
Definition~\ref{def:virtual-family} give the same unique-extension
property for virtual and mixed crossing moves.  The two vertex
identities give it for the virtual vertex moves.

For an IH move, the two trivalent trees encode the two
parenthesizations of the same partially defined product $\mu$.
Their admissible boundary labels agree because
$\kappa_{a+b}=\kappa_b\circ\kappa_a$.  On each side, the internal
flow label is prescribed by the boundary flow labels, and the
internal $X$-label is prescribed by the vertex relation.  This gives
the required bijections for IH moves.  Expansions and contractions
of a planar vertex tree are treated by iterating these products:
the coloring of a higher-valence vertex is interpreted through such
a tree.

At ordinary Reidemeister moves, the flow labels extend uniquely.
At virtual crossings they simply follow the strands.  Hence all the
coloring bijections just described lie over the corresponding
bijections of flow sets.  The bijection associated with a sequence
need not be independent of the chosen sequence; its existence is
sufficient for fixed-flow cardinalities and their maximum to be
invariant.
\end{proof}

For a finite flow group, forgetting the chosen flow gives the multiset
invariant
\[
 \Phi_X(D)=
 \left\{\!\left\{
   \#\operatorname{Col}_{X}(D,\phi)
   \ \middle|\ \phi\in\Flow(D;A)
 \right\}\!\right\}.
\]

\begin{rem}[Higher-valence vertices]\label{rem:higher-valence}
A higher-valence vertex can be replaced by a Y-oriented trivalent
tree, after which the preceding construction applies.  Different
trivalent trees are related by IH moves, and the ordinary $A$-family
theory gives equal coloring counts under those moves.  This produces
an invariant of the corresponding virtual \emph{handlebody-link},
whose spine is defined only up to IH moves.  It should not be confused
with an invariant of the original spatial graph: a trivalent
expansion preserves the regular neighborhood, but need not preserve
the spatial-graph isotopy type.  The bridge comparison in
Corollary~\ref{cor:handlebody-comparison} is nevertheless available
for this handlebody equivalence.
\end{rem}

\begin{rem}[Parallel operations]
Let $(X,\ustar,\ostar)$ be a finite biquandle.  Define
parallel operations recursively by
\begin{align*}
 x\ustar^{[0]}y&=x,&x\ustar^{[1]}y&=x\ustar y,\\
 x\ostar^{[0]}y&=x,&x\ostar^{[1]}y&=x\ostar y,
\end{align*}
and
\begin{align*}
 x\ustar^{[i+j]}y
   &=(x\ustar^{[i]}y)\ustar^{[j]}(y\ustar^{[i]}y),\\
 x\ostar^{[i+j]}y
   &=(x\ostar^{[i]}y)\ostar^{[j]}(y\ostar^{[i]}y).
\end{align*}
The \emph{parallel type} $\type(X)$ is the least positive integer
$N$ for which $x\ustar^{[N]}y=x\ostar^{[N]}y=x$ for all $x,y$.
It is a period of these operations, rather than the cardinality
of $X$.  If $N=\type(X)$, these operations are indexed by $\Z_N$ and form a
$\Z_N$-family.  Equipping it with the flip virtual switch produces a
$\Z_N$-family of virtual biquandles.  Any other virtual switch must
also satisfy Definition~\ref{def:virtual-family}.
\end{rem}

\subsection{Bridge bounds from fixed-flow colorings}
\label{subsec:flowed-bridge-bound}

The counting argument below is the bridge analogue of the classical
coloring estimates for handlebody-knots; compare
\cite{ishiiNelson,murao2018}.  Its essential feature is that one
$X$-color determines a colored trivial tree once its flow is fixed.
This does not require the three vertex colors to be equal.

We allow a finite group $G$ in this subsection, using the ordinary
$G$-family and $G$-flow conventions of
\cite{ishiiNelson,murao2018}.  For a nonabelian group, the flow on an
underpassing arc changes from $g$ to $h^{-1}gh$ when the overpassing
arc has flow $h$, and the flow relation at a vertex is an ordered
product.  The associated multiplication is
\[
 \mu((x,g),(\kappa_g(x),h))=(x,gh),
 \qquad
 \kappa_g(x)=x\ustar^{\,g}x=x\ostar^{\,g}x.
\]
The group-family identities give
\[
 \kappa_{gh}=\kappa_h\circ\kappa_g,\qquad
 \kappa_{1_G}=\operatorname{id}_X,\qquad
 \kappa_g^{-1}=\kappa_{g^{-1}}.
\]
The virtual switch is required to swap the flow labels, satisfy the
virtual and mixed relations with the associated classical switch,
and satisfy the vertex compatibility
\eqref{eq:virtual-vertex-left}--\eqref{eq:virtual-vertex-right}
and its oriented forms.  For the abelian group $A$ used elsewhere
in this paper, these are precisely the preceding definitions.
Write $\Flow_G(D)$ for the set of flows; thus
$\Flow_A(D)=\Flow(D;A)$.

\begin{lem}[One color determines a tree]\label{lem:flowed-tree}
Let $T$ be a crossing-free connected tree tangle or a proper arc,
with a fixed $G$-flow.  Fix a semiarc $e$ of $T$.  An $X$-coloring
of $T$ is uniquely determined by its color on $e$.
\end{lem}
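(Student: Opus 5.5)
The plan is to propagate colors outward from $e$ along the tree, checking at each step that a known color on one semiarc forces the colors on every adjacent semiarc. Since $T$ is crossing-free, its semiarcs are exactly the edges of $T$ cut only at graph vertices (there are no classical or virtual crossings inside $T$). In particular, for a proper arc there is a single semiarc, and the statement is immediate. In general, form the graph $\mathcal T$ whose nodes are the semiarcs of $T$ and whose edges join two semiarcs meeting at a common graph vertex. Because $T$ is a connected tree, $\mathcal T$ is connected. We proceed by induction on the distance in $\mathcal T$ from $e$, so it suffices to prove one local claim: at a vertex $w$ of $T$, the color on any one incident semiarc determines the colors on all other semiarcs incident to $w$, given the fixed flow.

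For the local claim, first reduce to trivalent vertices. By Remark~\ref{rem:higher-valence}, a higher-valence vertex is colored through a Y-oriented trivalent tree, and that tree is again crossing-free. Enlarging $T$ this way only adds internal semiarcs, so the induction is unaffected. At a trivalent Y-oriented vertex with incoming flows $g,h$ and outgoing flow $gh$, the three colors in the order left incoming, right incoming, outgoing are $x$, $\kappa_g(x)$, $x$. We check each incident semiarc in turn:
\begin{itemize}
\item If the known color is on the left incoming or the outgoing semiarc, then $x$ is known, so all three colors are determined.
\item If the known color $c$ is on the right incoming semiarc, then $x=\kappa_g^{-1}(c)=\kappa_{g^{-1}}(c)$, using the bijectivity of $\kappa_g$ from Lemma~\ref{lem:diagonal-maps} (or its group analogue stated above).
\end{itemize}
The reversed Y-orientation uses the same rule backward, with the same argument. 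The flow on each semiarc is fixed throughout, so only the $X$-coloring needs to be determined.

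The induction then finishes the argument. Every semiarc is reached from $e$ by a path in $\mathcal T$, and each step of that path is forced by the local claim. Hence any two colorings agreeing on $e$ agree everywhere. Uniqueness is all the statement asks; existence of a coloring is not needed here, though in fact the tree structure gives no consistency constraint, since each semiarc is reached along a unique path.

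The only real obstacle is the right-incoming case, where invertibility of $\kappa_g$ is essential. The other point needing care is the bookkeeping of which vertex slot a semiarc occupies: the flow fixes which branch is the left incoming one, and hence which of $\kappa_g$ and $\kappa_h$ appears.
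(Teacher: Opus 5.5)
Your proposal is correct and follows essentially the paper's own proof: bijectivity of $\kappa_g$ lets any one of the colors $x,\kappa_g(x),x$ at a Y-vertex (or at a reversed vertex) determine the other two, and propagating from $e$ along the tree, with the proper arc as the trivial case, fixes every remaining color. Your trivalent reduction and your remark on existence go slightly beyond what is needed; the ``unique path'' in that remark should be taken in $T$, since the line graph $\mathcal T$ contains triangles, but this does not affect the uniqueness argument.
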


\begin{proof}
At a standard Y-oriented vertex, the three colors have the form
\[
                         x,\quad\kappa_g(x),\quad x,
\]
where $g$ is prescribed by the flow.  Since $\kappa_g$ is a
bijection, knowing any one of these three colors determines the
other two.  The same holds for the reversed vertex rule.  Root the
tree at a point of $e$ and propagate along its edges.  The unique
paths in the tree determine every remaining color.  A proper arc
has a single color and needs no vertex argument.
\end{proof}
The following bound is in terms of unweighted bridge index, but the actual weights can be added later depending on the spatial graph of interest.
\begin{thm}[Fixed-flow bridge bound]\label{thm:g-family-bound}
Let $H$ be a virtual handlebody-link, and let $D$ be a Y-oriented
trivalent diagram representing $H$.  Fix orientations on any circle
components, and use the corresponding oriented handlebody moves.
Let $G$ be a finite group, and let $X$ be a finite $G$-family of
virtual biquandles with the compatibility specified above and
$|X|>1$.  Then, for every $\rho\in\Flow_G(D)$,
\begin{equation}\label{eq:fixed-flow-bridge-count}
       |\operatorname{Col}_X(D,\rho)|\leq |X|^{\widehat b(H)}.
\end{equation}
With the convention $\log_{|X|}0=-\infty$,
\begin{equation}\label{eq:g-family-unweighted-bound}
 \widehat b(H)\geq
 \max_{\rho\in\Flow_G(D)}
       \log_{|X|}|\operatorname{Col}_X(D,\rho)|.
\end{equation}

\end{thm}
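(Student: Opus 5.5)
The plan is to move the count to a diagram in bridge position and then read off the colorings there from one side of the splitting.

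\emph{Transport to a minimizing diagram.} Choose a trivalent representative $\Gamma$ of $H$ and a bridge splitting $D'=T_+\cup_\ell T_-$ of $\Gamma$ with $\widehat b(D',\ell)=\widehat b(H)$. By symmetry we may assume $|\pi_0(T_+)|=\widehat b(H)$. Since $D$ and $D'$ represent the same $H$, they are related by oriented classical and virtual graph Reidemeister moves and IH moves. Proposition~\ref{prop:virtual-invariance}, in its $G$-family form, then gives a bijection $\Flow_G(D)\to\Flow_G(D')$, $\rho\mapsto\rho'$, together with bijections $\operatorname{Col}_X(D,\rho)\cong\operatorname{Col}_X(D',\rho')$. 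It therefore suffices to show $|\operatorname{Col}_X(D',\rho')|\le|X|^{|\pi_0(T_+)|}$.

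Two points need care in this step. First, the Y-orientation of $D'$ must be compatible with the chosen orientations of the circle components; this is exactly why the statement uses the oriented handlebody moves. Second, the trivializing braids of the tangles may contain virtual crossings, so virtual invariance is genuinely needed. In the nonabelian case the flow changes by conjugation at classical crossings, but it still propagates uniquely, so the argument is unaffected.

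\emph{Trivialize $T_+$ and count.} By Definition~\ref{def:trivial-tangle} there is a virtual braid $\beta$ such that stacking $\beta$ on $T_+$ yields a crossingless tangle $T_+^0$. Insert $\beta$ followed by its reverse just above $\ell$ in $D'$; by Figure~\ref{fig:reverse-braid} this is a sequence of Reidemeister II moves (classical and virtual). Applying Proposition~\ref{prop:virtual-invariance} again, we obtain a diagram $D''$ and a flow $\rho''$ with the same coloring count. In $D''$ the upper part is the crossing-free forest $T_+^0$, and below it sits the braid $\beta^{-1}$ followed by $T_-$.

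By Lemma~\ref{lem:flowed-tree}, each component of $T_+^0$, which is a tree or a proper arc, has its coloring determined by the color of a single semiarc. So the colors on all semiarcs of $T_+^0$ take at most $|X|^{|\pi_0(T_+)|}$ values. In particular, these colors fix the colors at the boundary points on the line just below $T_+^0$.

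Next, the colorings at the classical crossings are propagated through the braid $\beta^{-1}$ by the bijective sideways switches $\mathcal R^{\pm1}$, and at the virtual crossings by the involution $\mathcal V$. Hence the colors on every strand of the braid region are determined by the colors at its top, and hence on all of $\partial T_-$. The remaining colors on $T_-$ are forced as well. Each component of $T_-$ is a tree meeting the boundary, so Lemma~\ref{lem:flowed-tree} shows that one boundary color determines the whole component. The boundary colors may be inconsistent with the tree rule, but that only lowers the count. Therefore the restriction map from colorings of $D''$ to colorings of $T_+^0$ is injective, which gives \eqref{eq:fixed-flow-bridge-count}.

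\emph{Main obstacle.} The hard step is the propagation through the braid. The sideways map $\mathcal R$ takes an incoming under color together with an incoming over color as input. Top-to-bottom propagation through a braid whose strands are oriented inconsistently (some up, some down) therefore needs the fact that the crossing map determines the bottom pair from the top pair for every orientation pattern. I would derive this from the invertibility of the sideways switch $S_{a,b}$ and of the right translations, together with the nondegeneracy of $\mathcal V$ and $\mathcal V^2=\mathrm{id}$. If this is awkward to establish directly, the fallback is to avoid the braid altogether: by the definition of trivial tangle, $T_-$ itself can be moved by a braid to a crossingless forest, so every crossing of $D''$ can be assumed to lie in a single braid between two crossing-free forests.

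\emph{Logarithmic form.} Finally, \eqref{eq:g-family-unweighted-bound} follows by taking $\log_{|X|}$ of \eqref{eq:fixed-flow-bridge-count}, which is legitimate because $|X|>1$, and then maximizing over $\rho$. The convention $\log_{|X|}0=-\infty$ covers flows that admit no coloring.
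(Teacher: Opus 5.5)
Your overall strategy is the paper's: transport the fixed flow and its coloring set to a trivalent representative with a splitting realizing $\widehat b(H)$, use Lemma~\ref{lem:flowed-tree} to get one free color per upper tree, and propagate bijectively through a braid. However, the step at the lower tangle fails as written.

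You trivialize only $T_+$, so below $\beta^{-1}$ you still have the original $T_-$. In general it contains classical and virtual crossings, both between different components and within one component, with extrema interspersed. Lemma~\ref{lem:flowed-tree} applies only to a \emph{crossing-free} tree or arc. Along a component of $T_-$, the color changes at each undercrossing by an amount that depends on the overcrossing strand's color. That color is not determined by the boundary color of the component alone. Since $T_-$ need not be in braid form, you also cannot finish by continuing the top-to-bottom sweep through it. So the claim that one boundary color determines each component of $T_-$ is unjustified, and with it the injectivity of the restriction map.

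The fix is to use the triviality of $T_-$ as well. At $\ell$, also insert the trivializing braid of $T_-$ together with its reverse. The diagram then becomes a crossing-free upper forest, a single middle braid, and a crossing-free lower forest, which is exactly the normal form used in the paper's proof. Your propagation through the middle braid now fixes all boundary colors of a crossing-free lower forest, and Lemma~\ref{lem:flowed-tree} legitimately gives at most one extension there.

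Your ``fallback'' paragraph essentially describes this normal form, but it does not avoid the braid. The normal form is needed for the lower tangle. Mixed-orientation propagation through the middle braid is still required, and you correctly plan to handle it with the sideways switch $S_{a,b}$, the inverse of $\mathcal R$, and the nondegeneracy of $\mathcal V$.
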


\begin{proof}
Fix $\rho\in\Flow_G(D)$.  The flowed handlebody Reidemeister and
IH moves induce bijections of flow sets and bijections of the
coloring sets over corresponding flows, as in
Proposition~\ref{prop:virtual-invariance}.  For a nonabelian flow
group this is the usual $G$-family invariance
\cite{ishiiNelson,murao2018}, together with the assumed virtual
compatibility.  Vertex expansions and contractions are interpreted
through their trivalent trees.  Thus we may transport $\rho$ and
its coloring set to any trivalent representative of $H$ along a
chosen sequence of these moves.

Choose a trivalent representative $\Gamma$ and a bridge splitting
that realizes $\widehat b(H)$.  Since both tangles are trivial, their
trivializing virtual braids put the diagram into the form of a
crossing-free upper forest, a virtual braid in a middle strip, and
a crossing-free lower forest.  This changes neither the number of
components on either side nor the fixed-flow coloring cardinality.
Denote the resulting diagram and transported flow by $D_S$ and
$\rho_S$.  In the classical setting this is the diagram adapted to
a bridge sphere in $S^3$; in the virtual setting it is a bridge-line
presentation.

Interchange the upper and lower sides if necessary, so that the
upper forest has the smaller number of components.  Write these
components as $T_1,\ldots,T_r$.  By the choice of the splitting,
\[
                         r=\widehat b(H).
\]
Choose a semiarc $e_i$ in each $T_i$.  By
Lemma~\ref{lem:flowed-tree}, the colors on $e_1,\ldots,e_r$
determine every color in the upper forest, and hence all colors
entering the middle braid.

The flow on every semiarc of the braid is already prescribed by
$\rho_S$.  At a classical crossing use the incoming-pair map
$\mathcal R$, its inverse, or the corresponding oriented form,
according to the crossing.  The sideways switch $S_{g,h}$ is used
to solve the local equations when the chosen inputs are a sideways
pair.  These maps are bijective by the biquandle axioms.
The virtual switch and its oriented forms are likewise
bijective.  Reading the braid from top to bottom therefore
determines every color in it and all boundary colors of the lower
forest.

Each lower component is a tree or a proper arc and meets the
boundary.  By Lemma~\ref{lem:flowed-tree}, its prescribed boundary
colors extend to at most one coloring of the component.  They may
be incompatible with its vertex relations, but they introduce no
further choices.  Therefore the evaluation map
\[
 \operatorname{Col}_X(D_S,\rho_S)\longrightarrow X^r,\qquad
 C\longmapsto(C(e_1),\ldots,C(e_r))
\]
is injective.  It follows that
\[
 |\operatorname{Col}_X(D,\rho)|
   =|\operatorname{Col}_X(D_S,\rho_S)|
   \leq |X|^r
   =|X|^{\widehat b(H)}.
\]
This proves \eqref{eq:fixed-flow-bridge-count} for every fixed
flow.  Taking logarithms for nonempty coloring sets and then the
maximum over flows proves \eqref{eq:g-family-unweighted-bound}.
\end{proof}

\begin{cor}[Alexander bridge bound]
\label{cor:alexander-unweighted-bound}
Under the hypotheses of Theorem~\ref{thm:g-family-bound}, suppose
$X$ is a $d$-dimensional vector space over a finite field $\mathbb F$,
where $d\geq1$.  Suppose each operation
$\ustar^{\,g},\ostar^{\,g}:X\oplus X\to X$ is $\mathbb F$-linear,
and, for every prescribed pair of flow labels, the two-color
virtual switch $X\oplus X\to X\oplus X$ is also $\mathbb F$-linear.
Then
\begin{equation}\label{eq:alexander-unweighted-bound}
 \widehat b(H)\geq
 \frac1d\max_{\rho\in\Flow_G(D)}
            \dim_{\mathbb F}\operatorname{Col}_X(D,\rho).
\end{equation}
\end{cor}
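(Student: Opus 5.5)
The plan is to deduce Corollary~\ref{cor:alexander-unweighted-bound} from the counting inequality \eqref{eq:fixed-flow-bridge-count} of Theorem~\ref{thm:g-family-bound}. The only new input is that, under the linearity hypotheses, each fixed-flow coloring set is a vector space, so its cardinality is a power of $|\mathbb F|$.

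First, for a fixed flow $\rho\in\Flow_G(D)$, I will check that $\operatorname{Col}_X(D,\rho)$ is an $\mathbb F$-linear subspace of $X^{S}$, where $S$ is the set of semiarcs of $D$. Once the flow is fixed, every local rule is a linear equation in the colors:
\begin{itemize}
\item at a classical crossing, the relations $z=y\ostar^{\,a}x$ and outgoing under-color $x\ustar^{\,b}y$ are linear, since the operations are linear on $X\oplus X$;
\item at a vertex, the rule $x,\kappa_g(x),x$ is linear, because $\kappa_g(x)=x\ustar^{\,g}x$ is the composite of the diagonal $X\to X\oplus X$ with a linear map;
\item at a virtual crossing, the switch for the prescribed pair of flow labels is linear by hypothesis.
\end{itemize}
The reversed-orientation forms are obtained by solving these same linear equations, so they remain linear. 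Hence $\operatorname{Col}_X(D,\rho)$ is the solution set of a homogeneous linear system. In particular it is nonempty, since it contains the zero coloring, and it has cardinality $|\mathbb F|^{\dim_{\mathbb F}\operatorname{Col}_X(D,\rho)}$.

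Second, I will substitute into \eqref{eq:fixed-flow-bridge-count}. Since $d\geq1$, we have $|X|=|\mathbb F|^d>1$, so Theorem~\ref{thm:g-family-bound} applies. It gives
\[
 |\mathbb F|^{\dim\operatorname{Col}_X(D,\rho)}\leq |\mathbb F|^{d\,\widehat b(H)} .
\]
Because $|\mathbb F|\geq2$, taking logarithms base $|\mathbb F|$ yields $\dim\operatorname{Col}_X(D,\rho)\leq d\,\widehat b(H)$. Dividing by $d$ and maximizing over the finite set $\Flow_G(D)$ gives \eqref{eq:alexander-unweighted-bound}. Equivalently, this is \eqref{eq:g-family-unweighted-bound} with $\log_{|X|}$ of a vector-space cardinality rewritten as $\dim/d$. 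No $-\infty$ convention is needed, because the coloring sets are never empty.

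The main obstacle is the first step: confirming that every local rule, including the inverse crossing maps and the sideways-solved forms, is genuinely linear. The inverse of a bijective linear map is linear, so $\mathcal R^{-1}$ and the solved forms obtained via $S_{g,h}$ are linear as well. There is also a subtlety to check: linearity is only assumed for the operations with fixed flow labels. This is sufficient, because the flow is fixed throughout and each semiarc's label is determined by $\rho$.
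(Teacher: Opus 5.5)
Your proposal is correct and follows the paper's proof: fix $\rho$, note that all crossing, vertex, and virtual-switch relations are homogeneous $\mathbb F$-linear, so $\operatorname{Col}_X(D,\rho)$ is a subspace of cardinality $q^{k_\rho}$, then apply the fixed-flow bound with $|X|=q^d$ to get $k_\rho\leq d\,\widehat b(H)$ and maximize over flows. Your extra remarks (nonemptiness via the zero coloring, linearity of the inverse and sideways-solved maps) are harmless refinements of the same argument.
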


\begin{proof}
Fix $\rho$.  With the flow labels prescribed, all classical,
virtual, and vertex equations are homogeneous linear equations
over $\mathbb F$.  In particular,
$\kappa_g(x)=x\ostar^{\,g}x$ is linear.  Thus
$\operatorname{Col}_X(D,\rho)$ is an $\mathbb F$-vector subspace of
the space of semiarc assignments.

Write $q=|\mathbb F|$ and
$k_\rho=\dim_{\mathbb F}\operatorname{Col}_X(D,\rho)$.
Then $|X|=q^d$ and
$|\operatorname{Col}_X(D,\rho)|=q^{k_\rho}$.
Theorem~\ref{thm:g-family-bound} gives
\[
 q^{k_\rho}\leq(q^d)^{\widehat b(H)},
 \qquad\text{hence}\qquad
 k_\rho\leq d\,\widehat b(H).
\]
Divide by $d$ and maximize over $\rho$ to obtain
\eqref{eq:alexander-unweighted-bound}.  
\end{proof}

Every individual fixed-flow count gives a bridge lower bound;
enumerating all flows is only needed to find the strongest bound
from the chosen family.  The sum of the counts over all flows
cannot replace the fixed-flow count in
\eqref{eq:fixed-flow-bridge-count}: allowing the flow itself to
vary introduces additional choices.

\subsection{Quandle specializations}

A \emph{quandle} is a set $Q$ with an operation $\triangleright$ such
that (i) $x\triangleright x=x$; (ii) each right translation
$S_y(x)=x\triangleright y$ is a bijection; and (iii)
\[
 (x\triangleright y)\triangleright z
   =(x\triangleright z)\triangleright(y\triangleright z).
\]
It becomes a one-sided biquandle by setting
$x\ustar y=x\triangleright y$ and $x\ostar y=x$.  If $v$ is a quandle
automorphism, $V(x,y)=(v^{-1}(y),v(x))$ is the usual virtual-link
rule.  In the flowed handlebody setting one must additionally check
the vertex compatibility in Definition~\ref{def:virtual-family};
the flip switch always satisfies it.

The \emph{type} of a finite quandle is the least positive integer
$N$ such that $S_y^N=\operatorname{id}$ for every $y$.  On an
oriented graph, define the signed valence
\[
 \epsilon(v)=\#E_{\rm in}(v)-\#E_{\rm out}(v).
\]
The \emph{equal-vertex rule} requires all incident semi-arcs to
have the same color.  Its compatibility condition concerns signed
valence, rather than unsigned valence alone.

\begin{prop}\label{prop:equal-vertex}
Let $Q$ be a finite quandle of type $N$, and give $G$ an orientation
such that $N$ divides $\epsilon(v)$ at every vertex.  The number of
quandle colorings with the equal-vertex rule and unchanged colors
at virtual crossings is an invariant of the oriented virtual
spatial graph.
\end{prop}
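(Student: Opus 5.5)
We verify that the number of colorings is unchanged under each generating move: the classical graph Reidemeister moves, the virtual Reidemeister moves, and the virtual vertex move. Colorings on the two sides of a local move must correspond bijectively when they agree on the boundary of the disk where the move takes place. Outside that disk nothing changes, so it suffices to show that each admissible boundary coloring extends uniquely on both sides.

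\emph{Knot moves.} For the classical Reidemeister moves away from vertices we use the standard quandle arguments. Axiom (i) handles RI. Axiom (ii), bijectivity of $S_y$, handles RII. Right self-distributivity (iii) handles RIII. The virtual Reidemeister moves and the mixed move are immediate, since colors pass unchanged through virtual crossings. For the mixed move, the over-strand color is unchanged on both sides of the virtual crossings, so the classical relation is the same on each side. The virtual vertex move is also immediate, because every strand passing virtually near a vertex keeps its color.

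\emph{Vertex moves.} There are two remaining classical moves involving vertices.
\begin{enumerate}
\item[(a)] Sliding a strand over or under a vertex. If the strand passes over, every edge at the vertex gets $S_y$ applied on the other side. Since all incident colors are equal to some $x$, they all become $x\triangleright y$ (or $S_y^{-1}(x)$ for reversed orientations). The equal-vertex rule is therefore preserved and the correspondence is bijective. If the strand passes under the vertex, its color is acted on by $x$ once for each incident edge it crosses. An incoming edge contributes $S_x$ and an outgoing edge contributes $S_x^{-1}$, up to the sign convention. The net effect is $S_x^{\pm\epsilon(v)}$, which is the identity because $N\mid\epsilon(v)$. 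So the exterior colors agree.
\item[(b)] The twist move, which rotates the vertex by a half-turn to create a crossing between two adjacent edges. At this crossing both strands carry the vertex color $x$, and $x\triangleright x=x$ gives unique extension.
\end{enumerate}

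The main obstacle is the sign bookkeeping in step (a) for the under-passing strand. The orientation of each incident edge determines whether crossing it applies $S_x$ or $S_x^{-1}$, and the point is that the total exponent is $\epsilon(v)$ rather than $\deg v$. I would check this first for a strand crossing all edges on one side. I would then observe that different choices of which side of the vertex the strand passes to are related by the same cancellation, since the whole cyclic sum is $\epsilon(v)\equiv 0 \pmod N$.
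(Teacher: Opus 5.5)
Your proposal is correct and follows essentially the same route as the paper: the quandle axioms handle the classical crossing moves, idempotency handles the vertex twist, the cancellation $S_x^{\pm\epsilon(v)}=\mathrm{id}$ handles a strand passing under the vertex, and colors are simply carried through the virtual and virtual-vertex moves. Your explicit treatment of the over-passing strand, and of the form of the slide in which the strand crosses complementary sets of edges on the two sides, is somewhat more detailed than the paper's terse argument.
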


\begin{proof}
The three quandle axioms give the usual unique-extension bijections
for the classical crossing moves.  At a vertex twist all involved
colors are equal, so idempotency applies.  When a strand passes
across a vertex colored $y$, its successive changes are right
translations $S_y$ or $S_y^{-1}$ according to the orientations.
The total change is $S_y^{\epsilon(v)}$ or its inverse, hence is
the identity.  The other colors are preserved by the same quandle
relations.  Virtual crossing and virtual vertex moves simply carry
colors along their strands.  All these correspondences are
reversible and fix the exterior colors.
\end{proof}

We use two examples.  The dihedral quandle
\[
 R_p=\Z_p,\qquad x\triangleright y=2y-x\pmod p,
\]
has type two when $p\geq3$ is odd.  In particular, $R_3$ can be used with
the equal-vertex rule on graphs of even valence.  The tetrahedral
quandle $T$ has operation table
\[
\begin{array}{c|cccc}
\triangleright&0&1&2&3\\ \hline
0&0&2&3&1\\
1&3&1&0&2\\
2&1&3&2&0\\
3&2&0&1&3
\end{array}
\]
and type three.  It can therefore be used with the equal-vertex
rule on trivalent graphs oriented so that every vertex is a source
or a sink.  An arbitrary Y-orientation with all labels equal to $1$
would not meet the signed-valence condition.  Equivalently,
\[
 ((x\triangleright y)\triangleright y)\triangleright y=x
 \qquad(x,y\in T).
\]

For an Eulerian spatial graph, Ishii and Yasuhara use a different
vertex relation \cite{ishii1997}.  With $R_p$, list the incident
semi-arcs in cyclic order and require their alternating sum to vanish.  We
use that rule only in the calculation of the next section; the bridge
bound in Section~\ref{sec:gap} uses the equal-vertex rule.

\subsection{A biquandle of interest}
\label{subsec:kishino-graph}

Write $T_4$ for the four-element biquandle with both $\overline{\triangleright}$ and $\underline{\triangleright}$ operations equal to the
following operation $\circ$:
\[
\begin{array}{c|cccc}
\circ&1&2&3&4\\ \hline
1&1&3&2&4\\
2&2&4&1&3\\
3&4&2&3&1\\
4&3&1&4&2
\end{array}
\]
The associated diagonal map $w(x)=x\circ x$ is
\[
 w(1)=1,\qquad w(2)=4,\qquad w(3)=3,\qquad w(4)=2.
\]
In particular, $w^2=\operatorname{id}$.  These are modern sideways
operation tables; a row is the first argument and a column is the
second.  They are not the incoming-pair operation tables in
Nelson--Vo's original convention \cite{nelsonVo2006}.

\begin{lem}
Let \(X=\{1,2,3,4\}\) and define
\[
x\mathbin{\underline{\triangleright}}y
=
x\mathbin{\overline{\triangleright}}y
=
x\circ y
\]
by the table above. Then
\((X,\mathbin{\underline{\triangleright}},
\mathbin{\overline{\triangleright}})\) is a biquandle.
\end{lem}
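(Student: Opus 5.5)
The plan is to verify the biquandle axioms in exactly the form used in the definition of an $A$-family, specialized to the trivial group (equivalently, a single pair of operations). Because $\mathbin{\underline{\triangleright}}$ and $\mathbin{\overline{\triangleright}}$ are both equal to $\circ$, the axioms collapse considerably:
\begin{itemize}
\item \eqref{eq:F1} holds trivially, since $x\circ x=x\circ x$.
\item \eqref{eq:F0} and \eqref{eq:F2u}--\eqref{eq:F2o} only serve to define the parallel operations, so they impose nothing here.
\item The three exchange laws \eqref{eq:E1}--\eqref{eq:E3} all become the single identity
\[
 (x\circ y)\circ(z\circ y)=(x\circ z)\circ(y\circ z)\qquad(x,y,z\in X).
\]
\end{itemize}
What remains to check is therefore three things: the right translations are bijective, the sideways switch $S(x,y)=(y\circ x,\,x\circ y)$ is a bijection of $X^2$, and the exchange identity above holds.

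The first check is read directly off the table. A row is the first argument, so $x\mapsto x\circ y$ is the $y$-th column, and each column must be a permutation. The four columns are $(1,2,4,3)$, $(3,4,2,1)$, $(2,1,3,4)$ and $(4,3,1,2)$, so the right translations are the permutations
\[
 S_1=(3\,4),\qquad S_2=(1\,3\,2\,4),\qquad S_3=(1\,2),\qquad S_4=(1\,4\,2\,3),
\]
and all four are bijections. For the sideways switch I will list its $16$ values and confirm they are pairwise distinct. Since $X^2$ is finite, injectivity suffices.

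The exchange identity is the main obstacle, because it is $64$ instances. I would first look for a structural description that makes it automatic. The idea is to identify $X$ with $\Z_4$ or $\mathbb F_4$ (or $\Z_2^2$) under a suitable relabeling so that $x\circ y=tx+sy$ is affine. An Alexander-type biquandle with equal operations satisfies the exchange law as soon as the coefficients satisfy the relevant commutation relation. The diagonal map $w$, which is an involution swapping $2$ and $4$, and the cycle structure of the $S_y$ suggest trying $\Z_2^2$ coordinates. If such a linear model is found, the exchange law reduces to a one-line polynomial identity in $t$ and $s$.

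If no clean model appears, the fallback is a direct finite verification. I would organize it by $y$ and $z$: for each of the $16$ pairs, compare the permutations $x\mapsto S_{z\circ y}(S_y(x))$ and $x\mapsto S_{y\circ z}(S_z(x))$ composed from the list above. When $y=z$ the identity is automatic, so this is at most $12$ permutation comparisons. Either route finishes the proof.
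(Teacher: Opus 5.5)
Your proposal is correct and is essentially the paper's argument. The paper uses exactly the $\mathbb F_2^2$ linear model you anticipate, $x\circ y=Ax+By$ with $A=\begin{pmatrix}1&0\\1&1\end{pmatrix}$ and $B=\begin{pmatrix}0&1\\1&0\end{pmatrix}$. It gets bijectivity from $\det A=\det M=1$, where you read it off the table (your $S_1,\dots,S_4$ are right), and the exchange law from $A^2=B^2=I$ and $AB+B^2=BA$. Your brute-force fallback would also succeed.

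When you search for the model, note that the coefficients must be noncommuting matrices. For commuting $t,s$ the exchange identity reduces to $s^2=0$, which over a field such as $\mathbb F_4$ would make $x\circ y$ independent of $y$.
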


\begin{proof}
Identify \(X\) with \(\mathbb{F}_2^2\) by
\[
1=\binom{0}{0},\qquad
2=\binom{0}{1},\qquad
3=\binom{1}{0},\qquad
4=\binom{1}{1},
\]
where elements of \(\mathbb{F}_2^2\) are regarded as column vectors.
A direct comparison with the operation table shows that
\[
x\circ y=Ax+By,
\qquad
A=
\begin{pmatrix}
1&0\\
1&1
\end{pmatrix},
\qquad
B=
\begin{pmatrix}
0&1\\
1&0
\end{pmatrix}.
\]
Equivalently,
\[
\binom{x_1}{x_2}\circ\binom{y_1}{y_2}
=
\binom{x_1+y_2}{x_1+x_2+y_1},
\]
where all entries are calculated in \(\mathbb{F}_2\).

We verify the biquandle axioms. Since the two operations coincide,
\[
x\mathbin{\underline{\triangleright}}x
=
x\circ x
=
x\mathbin{\overline{\triangleright}}x
\]
for every \(x\in X\), so the diagonal axiom holds.

For each fixed \(y\in X\), both right translations are given by
\[
x\longmapsto Ax+By.
\]
Since
\[
\det(A)=1\in\mathbb{F}_2,
\]
these maps are bijective.

The switch map is
\[
S(x,y)
=
\left(
y\mathbin{\overline{\triangleright}}x,\,
x\mathbin{\underline{\triangleright}}y
\right)
=
(Ay+Bx,\,Ax+By).
\]
With respect to the ordered pair of column vectors \(\binom{x}{y}\),
the switch is represented by the block matrix
\[
M=
\begin{pmatrix}
B&A\\
A&B
\end{pmatrix}
=
\begin{pmatrix}
0&1&1&0\\
1&0&1&1\\
1&0&0&1\\
1&1&1&0
\end{pmatrix}.
\]
Since
\[
\det(M)=1\in\mathbb{F}_2,
\]
the switch map is bijective.

Finally, because the two biquandle operations coincide, all three
exchange laws reduce to the single identity
\[
(x\circ y)\circ(z\circ y)
=
(x\circ z)\circ(y\circ z).
\]
The matrices \(A\) and \(B\) satisfy
\[
A^2=B^2=I
\qquad\text{and}\qquad
AB+B^2=BA.
\]
Therefore,
\begin{align*}
(x\circ y)\circ(z\circ y)
&=A(Ax+By)+B(Az+By)\\
&=A^2x+BAz+(AB+B^2)y\\
&=A^2x+BAz+BAy\\
&=A^2x+BAy+(AB+B^2)z\\
&=A(Ax+Bz)+B(Ay+Bz)\\
&=(x\circ z)\circ(y\circ z).
\end{align*}
Thus the diagonal axiom, the required bijectivity axioms, and all
three exchange laws are satisfied. Hence \(X\) is a biquandle.
\end{proof}

\begin{lem}[Type and zero-flow vertices]\label{lem:t4-zero-flow}
The biquandle $T_4$ has parallel type two.  Its $\Z_2$-family is
\[
 x\ustar^{\,0}y=x\ostar^{\,0}y=x,\qquad
 x\ustar^{\,1}y=x\ostar^{\,1}y=x\circ y.
\]
For an ordered merge with incoming flows $a,b$ and outgoing flow
$a+b$, the colors are $(x,\kappa_a(x),x)$, where
$\kappa_0=\operatorname{id}$ and $\kappa_1=w$.  In particular,
\[
\begin{array}{c|c}
\text{ordered flows}&\text{ordered colors}\\ \hline
(1,1,0)&(x,w(x),x)\\
(1,0,1)&(x,w(x),x)\\
(0,1,1)&(x,x,x)
\end{array}
\]
Splits use these relations in reverse.
\end{lem}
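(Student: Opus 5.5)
The plan is to work entirely in the linear model from the preceding lemma: $X=\mathbb F_2^2$ with $x\circ y=Ax+By$, $A=\begin{pmatrix}1&0\\1&1\end{pmatrix}$, $B=\begin{pmatrix}0&1\\1&0\end{pmatrix}$.

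\textbf{Step 1: parallel type.} By definition, $x\ustar^{[1]}y=x\circ y$. The recursion gives
\[
 x\ustar^{[2]}y=(x\circ y)\circ(y\circ y)=A(Ax+By)+B(Ay+By).
\]
Since $A^2=B^2=I$, this equals $x+(AB+BA+I)y$. We check directly that $AB+BA=I$ over $\mathbb F_2$. Indeed,
\[
 AB=\begin{pmatrix}0&1\\1&1\end{pmatrix},\qquad
 BA=\begin{pmatrix}1&1\\1&0\end{pmatrix},
\]
and their sum is $I$. Hence $x\ustar^{[2]}y=x$. The overbar operation is identical, so the same computation applies. Parallel type one is impossible, because $x\circ y\neq x$ in general; for example, $2\circ1=2$ but $1\circ2=3$. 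Therefore $\type(T_4)=2$.

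\textbf{Step 2: the $\Z_2$-family.} By the parallel-operations remark, indexing by $\Z_2$ gives exactly the displayed operations: $\ustar^{0}$ and $\ostar^{0}$ are the identity, and $\ustar^{1}$ and $\ostar^{1}$ equal $\circ$. The remark already asserts the family axioms. Still, F2u can be checked directly. The only nontrivial case is $a=b=1$, where it reads $(x\circ y)\circ(y\circ y)=x$, and this is Step 1. When $a$ or $b$ is $0$, F2u holds by F0. The exchange laws with a zero index are trivial. With both indices equal to $1$, they are the biquandle exchange law already proved. Mixed cases, with one index $0$ and the other $1$, reduce to identities such as $x\circ z=x\circ z$ after applying F0.

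\textbf{Step 3: the vertex rule.} The diagonal maps are $\kappa_0=\mathrm{id}$ and $\kappa_1(x)=x\circ x=w(x)$. Substituting into the general vertex rule $(x,\kappa_a(x),x)$ produces the table row by row:
\begin{itemize}
\item flows $(1,1,0)$: $a=1$, so the colors are $(x,w(x),x)$;
\item flows $(1,0,1)$: $a=1$, so the colors are $(x,w(x),x)$;
\item flows $(0,1,1)$: $a=0$, so the colors are $(x,x,x)$.
\end{itemize}
The table of $w$ follows from $w(x)=(A+B)x$ with $A+B=\begin{pmatrix}1&1\\0&1\end{pmatrix}$. Splits are the same relations read backward, by the convention stated for reversed Y-orientations.

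The only step with real content is the identity $AB+BA=I$ in Step 1; everything else is substitution.
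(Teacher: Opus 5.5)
Your proof is correct and follows the paper's argument. The key step is the same linear computation $x\ustar^{[2]}y=(x\circ y)\circ w(y)=x+(AB+BA+I)y=x$; the $\Z_2$-family then comes from the type-$N$ construction (the paper cites Ishii--Nelson's Theorem~4.5 where you cite the parallel-operations remark), and the table is obtained by substituting $\kappa_0=\operatorname{id}$, $\kappa_1=w$ into the vertex rule. One cosmetic point: the witness $1\circ2=3\neq1$ alone shows the first operation is nontrivial, so the example $2\circ1=2$, which is not a witness, can be dropped.
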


\begin{proof}
With $A,B$ as above,
\[
 (x\circ y)\circ w(y)
 =A^2x+(AB+BA+B^2)y=x.
\]
Thus both second parallel operations are trivial; the first
operations are not trivial, so the type is exactly two.  The
$\Z_2$-family assertion is \cite[Theorem~4.5]{ishiiNelson}.
The vertex formulas are the partial multiplication in
\cite[Proposition~5.4]{ishiiNelson}; coloring invariance is
\cite[Proposition~5.2 and Corollary~5.3]{ishiiNelson}.
This also shows why a zero-flow edge cannot simply be assigned
an arbitrary color: it still participates in the vertex relation.
\end{proof}

\begin{exmp}[The $a,e$ attachment]\label{ex:t4-ae}
Use the semiarc labels of Nelson--Vo's Kishino diagram
\cite[Example~6]{nelsonVo2006}.  Add an edge $s$ at $a$ and $e$,
give the original edges flow $1$, and give $s$ flow $0$.  Choose
the ordered merge at $a$ with flows $(1,0,1)$ and the reversed
$(0,1,1)$ vertex at $e$, so that $s$ is directed from the latter
vertex to the former.  If $z$ is its color, the two vertex
relations read
\[
                            z=w(a),\qquad z=e.
\]
Crossings of $s$ with the old diagram may be taken virtual.
Thus restriction to Kishino colorings is bijective onto the
colorings satisfying $e=w(a)$.

For completeness, the four equations in the first half are
\[
 b\circ c=a,\qquad c\circ b=d,\qquad
 c\circ d=b,\qquad d\circ c=e.
\]
Their solutions are the following eight rows, obtained from the
$4\times4$ operation table:
\[
\begin{array}{c|ccc|c}
 a&b&c&d&e\\ \hline
 1&1&1&1&1\\
 1&2&3&2&1\\
 2&2&1&3&4\\
 2&1&3&4&4\\
 3&4&1&4&3\\
 3&3&3&3&3\\
 4&3&1&2&2\\
 4&4&3&1&2
\end{array}
\]
To check completeness, choose $b,c$; the first, second, and fourth
equations determine $a,d,e$, and the third tests the choice.
Exactly the eight displayed choices pass.  The other half has the
same equations with $(a,b,c,d,e)$ replaced by $(e,f,g,h,a)$.
Each half has two solutions for each incoming endpoint color and
sends that color through $w$.  Since $w^2=\operatorname{id}$,
the halves glue in $4\cdot2\cdot2=16$ ways.  The endpoint pairs are
\[
 (a,e)=(1,1),(2,4),(3,3),(4,2),
\]
each occurring four times.  All sixteen therefore extend uniquely across
this attachment.  This calculation specifies both the attachment
sites and the local vertex orders.  The ravel family in
Section~\ref{subsec:virtual-ravels} is constructed separately by
cutting unknot summands at $g$ and joining their $g$-arcs; its
coloring growth is proved directly in
Lemma~\ref{lem:ravel-coloring-growth}.
\end{exmp}

\subsection{A virtual type-three modification}

Let $T$ be the tetrahedral quandle.  A $\nu(3)$-move is the virtual
local replacement shown in Figure~\ref{fig:nu3}; an occurrence in a
graph diagram is shown in Figure~\ref{fig:nu3-action}.

\begin{prop}\label{prop:nu3}
Orient both strands of Figure~\ref{fig:nu3} upward.  The displayed
replacement gives a bijection of $T$-colorings fixing all exterior
colors.  A sequence of these replacements therefore preserves the
coloring number of any diagram in which they occur.
\end{prop}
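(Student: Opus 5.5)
The plan is to reduce the statement to a comparison of two maps $T^2\to T^2$.

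Since both strands of Figure~\ref{fig:nu3} are oriented upward, each side of the replacement is a two-strand virtual braid. A $T$-coloring of either side is determined by its two incoming (bottom) colors. At a classical crossing the incoming-pair map is a bijection: the over-color is unchanged and the under-color is sent through the right translation $S_y$ or $S_y^{-1}$. At a virtual crossing the colors are carried along unchanged, using the flip switch from the quandle specialization. Reading each braid from bottom to top therefore gives a well-defined map $\Psi_{\mathrm{L}},\Psi_{\mathrm{R}}\colon T^2\to T^2$ from incoming to outgoing colors. Every interior semiarc color is uniquely determined by the incoming pair. The first step is to record this unique-extension property, which is the same reasoning as in the proof of Proposition~\ref{prop:equal-vertex}.

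The second step is to compute $\Psi_{\mathrm{L}}$ and $\Psi_{\mathrm{R}}$ explicitly. I expect the $\nu(3)$ side to consist of three classical crossings of the same sign, with the same strand passing under each time, separated by virtual crossings that return the strands to their original positions. Along that side:
\begin{itemize}
\item the over-strand keeps its color $y$ throughout, since classical crossings do not change the over-color and virtual crossings change nothing;
\item the under-strand color becomes $S_y(x)$, then $S_y^2(x)$, then $S_y^3(x)$.
\end{itemize}
The type-three identity $((x\triangleright y)\triangleright y)\triangleright y=x$ then gives $\Psi_{\mathrm{L}}(x,y)$ equal to the identity, or to the coordinate swap, depending on the net permutation of positions. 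The other side contains only virtual crossings or no crossings, so $\Psi_{\mathrm{R}}$ is that same permutation. If instead the crossings alternate sign or the over/under roles alternate, I would track the word in $S_y^{\pm1}$ and $S_x^{\pm1}$. The one place where the argument could fail is checking that the over-color is constant along the relevant crossings, so that all the translations are by the same element and the exponent sum is divisible by $3$.

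Once $\Psi_{\mathrm{L}}=\Psi_{\mathrm{R}}$ is established, the bijection follows formally. A coloring of a diagram containing one side restricts to an exterior coloring together with the incoming pair at the local disk, and the interior colors are forced. The same exterior data extends uniquely across the other side, because the outgoing colors agree. This correspondence fixes all exterior colors and is reversible, so it is a bijection of coloring sets. Composing these bijections over a sequence of replacements preserves the coloring number.

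The main obstacle is the second step: I must confirm from Figure~\ref{fig:nu3} which strand is under at each classical crossing, and that the virtual crossings really do keep the over-strand color fixed. If the picture were not of this form, I would compute $\Psi_{\mathrm{L}}$ directly on all $16$ input pairs from the operation table of $T$.
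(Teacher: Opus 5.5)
Your proposal is correct and follows essentially the same route as the paper. The paper also propagates the boundary colors $x,y$ through the three successive crossings. It observes that the over-color $y$ is unchanged while the under-color becomes $((x\triangleright y)\triangleright y)\triangleright y=x$ by the type-three identity, so the endpoint colors agree with those of the trivial side. It then concludes from reversibility of propagation and composition of the resulting bijections.

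Your framing via the incoming-to-outgoing maps $\Psi_{\mathrm L},\Psi_{\mathrm R}$ only makes the unique-extension step explicit. The labels in Figure~\ref{fig:nu3} confirm the configuration you anticipated: the $y$-strand stays over with constant color, and the $x$-strand passes under it three times. Because $S_y^{3}=\operatorname{id}$ and hence $S_y^{-3}=\operatorname{id}$, the direction in which you read the crossings does not matter.
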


\begin{proof}
Starting with endpoint colors $x,y$, propagation through the three
successive crossings changes $x$ to
\[
                  ((x\triangleright y)\triangleright y)
                    \triangleright y=x.
\]
The endpoint colors therefore agree on the two sides of the move.
Propagation is reversible, so this correspondence is a bijection on
colorings.  Composing these bijections proves the proposition.
\end{proof}

\begin{figure}[htbp]
\labellist
\small\hair 2pt
\pinlabel $x$ at -14 6
\pinlabel $y$ at 116 6
\pinlabel $x$ at -14 356
\pinlabel $y$ at 116 356
\pinlabel $x$ at 184 356
\pinlabel $y$ at 296 356
\pinlabel $x\triangleright y$ at 316 296
\pinlabel $y$ at 283 246
\pinlabel $(x\triangleright y)\triangleright y$ at 376 186
\pinlabel $((x\triangleright y)\triangleright y)\triangleright y$ at 428 51
\endlabellist
  \centering
  \includegraphics[width=.2\linewidth]{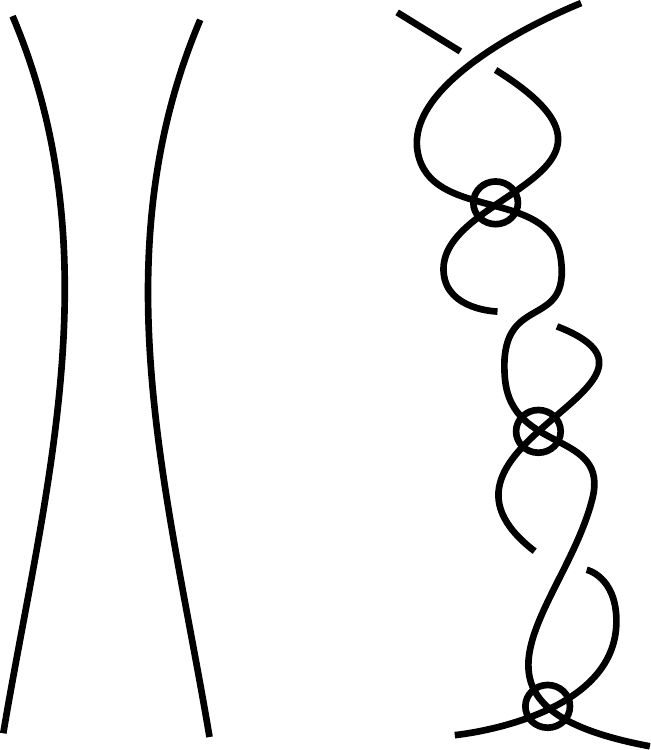}
  \caption{Color propagation through a $\nu(3)$-move.  Both strands
  are oriented from bottom to top.}
  \label{fig:nu3}
\end{figure}

\begin{figure}[htbp]
  \centering
  \includegraphics[width=.6\linewidth]{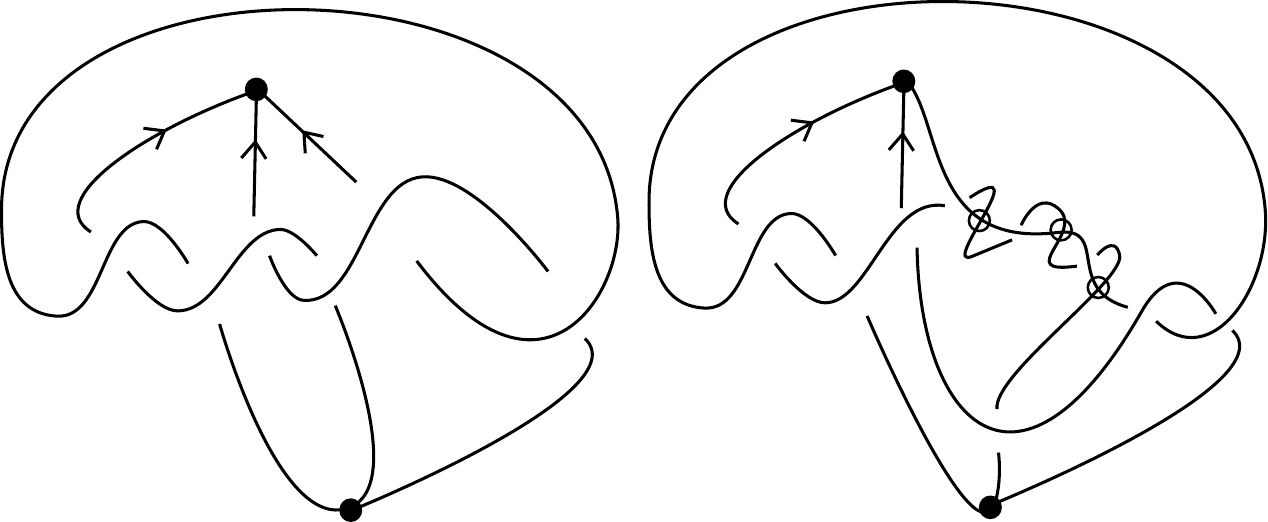}
  \caption{A $\nu(3)$-move in a graph diagram.}
  \label{fig:nu3-action}
\end{figure}

In the flowed virtual-biquandle framework, the flip virtual switch
recovers the ordinary virtual crossing rule used above.  A more
general virtual switch must satisfy
Definition~\ref{def:virtual-family} before its coloring count can be
used in a handlebody bridge bound.

\section{A detailed linear coloring calculation}\label{sec:linear}

Consider the graph obtained from the handcuff graph $7_{29}$ by
doubling one edge, labeled as in Figure~\ref{fig:labeled}.  At a
crossing, an $R_3$ coloring satisfies
\[
                         2x-y-z=0,
\]
where $x$ is the over-color and $y,z$ are the under-colors.  At each
four-valent vertex we use the Ishii--Yasuhara relation
$a-b+c-d=0$ in cyclic order.  The nine resulting equations are
\begin{align*}
 x_1-x_{10}+x_{11}-x_2&=0,&
 2x_4-x_2-x_3&=0,\\
 2x_3-x_6-x_7&=0,&
 2x_6-x_1-x_3&=0,\\
 2x_1-x_4-x_9&=0,&
 2x_7-x_4-x_5&=0,\\
 -x_7+x_8+x_{11}-x_{10}&=0,&
 2x_5-x_8-x_9&=0,\\
 2x_9-x_5-x_6&=0.&&
\end{align*}

\begin{figure}[htbp]
  \centering
  \includegraphics[width=.5\linewidth,trim=75 115 90 65,clip]{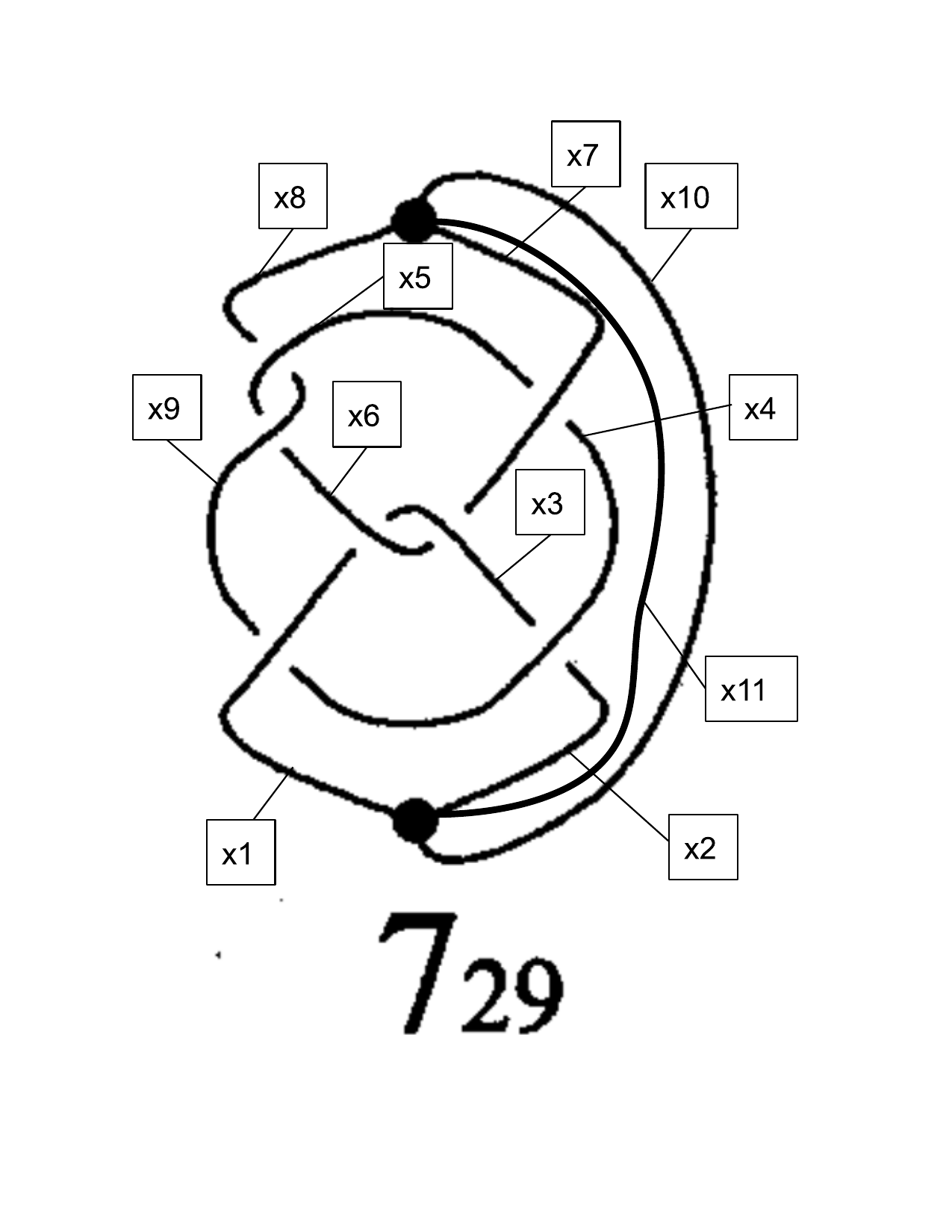}
  \caption{The labeled graph used in the $R_3$ calculation.}
  \label{fig:labeled}
\end{figure}

Row reduction over $\Z_3$ gives rank eight.  Taking
$x_9,x_{10},x_{11}$ as free variables, the dependent variables are
\begin{align*}
 x_1&=x_9+2x_{10}+x_{11},&
 x_2&=x_9+x_{10}+2x_{11},\\
 x_3&=x_9+x_{10}+2x_{11},&
 x_4&=x_9+x_{10}+2x_{11},\\
 x_5&=x_9,&x_6&=x_9,\\
 x_7&=x_9+2x_{10}+x_{11},&x_8&=x_9.
\end{align*}
Thus the coloring space has dimension three and the graph has
$3^3=27$ colorings.

This count illustrates the linear method.  It is not a
component-capacity estimate: the alternating vertex relation has
more local degrees of freedom than the equal-vertex relation.
The component-weight formula gives $b_c\geq6$, since the tangle
containing both four-valent vertices has weight at least
$2+2(4-2)$.  No upper bound is needed for this calculation.

\section{The overpass bridge index}\label{sec:overpass}

We use tree overpass presentations, in parallel with the forest
condition in the definition of a bridge splitting.

\begin{defn}[Tree overpass presentation]\label{def:overpass-presentation}
A \emph{tree overpass presentation} of a virtual spatial graph is a
diagram $D$ together with pairwise disjoint connected subgraphs
$\mathcal O=\{O_1,\ldots,O_s\}$ satisfying the following conditions:
\begin{enumerate}
\item Each $O_i$ is a tree or a proper arc, and its boundary consists
of points in edge interiors, away from crossings and graph vertices.
\item Every classical overcrossing belongs to an $O_i$, and no
classical undercrossing belongs to any $O_i$.
\item The closure of $D\setminus\bigcup_i O_i$ is a forest of trees
and proper arcs, and every connected component of $D$ meets
$\bigcup_i O_i$.
\end{enumerate}
A virtual crossing neither joins strands nor cuts them: the forest
conditions concern the underlying graph.  Additional crossing-free
arcs may be selected among the $O_i$ when needed to break a cycle.
For example, a crossing-free circle is divided into one overpass arc
and one complementary arc.
\end{defn}

One may start by cutting a diagram at its classical undercrossings
and looking at the resulting pieces that contain overcrossings.
For graphs these maximal pieces can contain cycles, and the
complement can contain cycles as well.  The forest conditions in
Definition~\ref{def:overpass-presentation} make explicit the extra
requirement needed to treat overpasses as tree components.
Existence follows from a bridge splitting by the construction in
Proposition~\ref{prop:overpass-bound} below.

\begin{defn}[Unweighted and weighted overpass indices]
The \emph{unweighted overpass bridge index} is
\[
 \widehat b_o(G)=\min_{(D,\mathcal O)}|\mathcal O|,
\]
where the minimum ranges over tree overpass presentations of $G$.
For an overpass tree $O$, define
\[
 \omega(O)=|\partial O|
          =2+\sum_{v\in V(O)}(\deg_G(v)-2).
\]
Here $\partial O$ consists of its boundary ends, and $V(O)$ consists
of its original graph vertices.  The \emph{weighted overpass bridge
index}, abbreviated to \emph{overpass bridge index}, is
\[
 b_o(G)=\min_{(D,\mathcal O)}
                 \sum_{O\in\mathcal O}\omega(O).
\]
The unweighted and weighted minima are taken separately.
\end{defn}

The weight formula is Lemma~\ref{lem:weight}: an arc has weight two,
a $d$-valent pod has weight $d$, and a tree containing several
vertices has the corresponding sum of excess valences.
On knots and links the forest requirement is automatic once every
component has an overpass, and all overpasses are arcs.  Consequently
$\widehat b_o$ is the usual first bridge index
\cite{nakanishiSatoh}, and $b_o=2\widehat b_o$ in that case.
For graphs, vertex weights prevent this reduction in general.
Since every tree component has weight at least two, one always has
$2\widehat b_o(G)\leq b_o(G)$.

\begin{prop}\label{prop:overpass-bound}
For every virtual spatial graph $G$,
\[
 \widehat b_o(G)\leq\widehat b(G),\qquad b_o(G)\leq b(G).
\]
For classical spatial graphs, $b_o(G)=b(G)$.
\end{prop}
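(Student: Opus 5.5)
Starting from a bridge splitting $D=T_+\cup_\ell T_-$ that realizes $b(G)$ (respectively $\widehat b(G)$), we will build a tree overpass presentation whose overpass trees are exactly the components of one of the two tangles. The weight count will then be inherited directly. First we use the trivializing braids, as in the proof of Theorem~\ref{thm:g-family-bound}, to rewrite the splitting as a crossing-free upper forest, a virtual braid in a middle strip, and a crossing-free lower forest. The component counts and $|D\cap\ell|$ are unchanged. Next we make every classical crossing of the middle braid have its overstrand belonging to the upper side. The idea is to push the bridge line down to just above the lower forest, so that the entire braid is attached to the upper forest, and then reinterpret the braid crossings. This is the step I expect to be the main obstacle. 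In the virtual setting we cannot simply change crossings, so instead I would take the overpass trees to be the components of $T_+$ extended through the braid. The worry is that an upper component can also pass \emph{under} at some braid crossing.

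To handle this, I would first try the standard move from knot theory (compare \cite{nakanishiSatoh}): cut each strand of the braid at its undercrossings. Each piece containing an overcrossing then becomes an overpass arc, and the remaining pieces join the lower forest. That by itself would not bound the count by the number of upper components. So instead I would use the inverse trivializing braid. We stack $\beta^{-1}\beta$ below the lower forest, or equivalently slide the braid of $T_+$ entirely into $T_-$ by detour moves, so that all classical crossings lie in one tangle, say $T_-$. Then $T_+$ is a crossing-free forest. The overpasses are taken to be the components of $T_-$ viewed as trees carrying the crossings. For this to satisfy condition (2) we need each crossing's overstrand to be in an overpass and its understrand outside. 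Here I would instead take the overpasses $O_i$ to be the \emph{upper} components, isotoped to lie above the braid in the sense that they pass over everything: for a trivial tangle $\beta\cdot(\text{crossingless})$, the over-information can be transferred. Concretely, a crossingless forest stacked with a braid is equivalent to the forest with its arcs drawn as overpassing arcs crossing the other side. I will verify this local claim crossing by crossing using detour moves and Reidemeister II.

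With that identification, condition (1) holds because the components of $T_+$ are trees or arcs with endpoints on $\ell$. Condition (3) holds because the complement is $T_-$, a forest, and every component meets $\ell$. The weights are $\omega(O)=|O\cap\ell|$, which agrees with Lemma~\ref{lem:weight}. Choosing the side with fewer components gives $\widehat b_o(G)\le\widehat b(G)$. Choosing the splitting that realizes $b(G)$, and noting that $\sum_{O\subset T_+}\omega(O)=|D\cap\ell|$, gives $b_o(G)\le b(G)$. Note that the weights count only $T_+$, which is at most the full count $|D\cap\ell|$, since each intersection point is counted once on each side.

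For the classical equality we cite \cite[Corollary~3.3]{blackwell2026}, translated to our convention (our $b$ is $2\beta$, and overpass weights are rescaled likewise). The reverse inequality $b\le b_o$ classically comes from lifting each overpass tree above a level sphere in $S^3$ and the complementary forest below it. This gives a bridge sphere meeting the graph in $\sum\omega(O)$ points.
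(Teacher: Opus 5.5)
Your outer framework matches the paper's:
\begin{itemize}
\item put the splitting in the form of crossing-free forests $F_+$ and $F_-$ separated by a virtual braid $\gamma$;
\item make the components $P_1,\dots,P_s$ of one side into overpass trees, so that $\sum_i\omega(O_i)=\sum_i\omega(P_i)=|D\cap\ell|$ (this is an equality, not just ``at most'');
\item use the top--bottom symmetry to pick the side with fewer components;
\item classically, lift the overpass forest above a plane and its complement below it. This uses condition (2), which guarantees that neither forest has self-crossings.
\end{itemize}

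The gap is the step you flag as the main obstacle and then do not carry out. The claim that a crossingless forest stacked with a braid ``is equivalent to the forest with its arcs drawn as overpassing arcs'' is not local. It is the entire content of $b_o\leq b$, and detour moves plus Reidemeister II cannot establish it. A detour move only relocates an arc all of whose crossings are virtual, so it cannot move any piece of a braid strand that carries a classical crossing. The same objection applies to your sliding of the braid of $T_+$ into $T_-$ by detour moves, though there you can simply move $\ell$. A Reidemeister II move only creates or cancels an adjacent pair of crossings. Neither move can carry an undercrossing of a strand past one of its overcrossings. So suppose a strand leaving $F_+$ first passes under some strand and later over another. Any initial collar of it containing the overcrossing also contains the undercrossing, and condition (2) fails. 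This is exactly the worry you raise and leave unresolved.

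The missing ingredient is the paper's dragging move, which needs Reidemeister III. Slide an undercrossing of a strand $a$, say under $c$, down along $a$ by dragging a finger of $c$ along $a$. Getting past a crossing where $a$ passes over $d$ takes two steps:
\begin{itemize}
\item a Reidemeister II move creating two crossings of $c$ over $d$;
\item a Reidemeister III move with $c$ on top, $a$ in the middle and $d$ at the bottom.
\end{itemize}
Virtual crossings met on the way are passed by mixed moves. This only stretches edge collars and changes neither the abstract trees nor their leaves. Work through the braid from the bottom up. Then each new crossing puts an upper portion over a lower one, and you reach a diagram in which every strand meets all its overcrossings before all its undercrossings. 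Now cut each strand at that transition, attaching the upper piece to its tree in $F_+$ and the lower piece to its tree in $F_-$. The extended upper trees are then overpass trees with $\omega(O_i)=\omega(P_i)$, and their complement is the extended lower forest. That is the object your counting argument needs.
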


\begin{proof}
Take a bridge splitting with upper tangle
$T_+=P_1\sqcup\cdots\sqcup P_s$.  Undoing its braid displays $T_+$
as a crossingless forest.  Move each classical undercrossing toward
the lower forest using the dragging moves in
Figures~\ref{fig:dragging} and~\ref{fig:dragging-move}, reversed
as necessary.  This
stretches edge collars without changing the abstract trees or their
numbers of boundary ends.  The upper trees become overpass trees
$O_1,\ldots,O_s$, while their complement is the lower forest with
extended collars.  Thus this is a tree overpass presentation, and
\[
 |\mathcal O|=s,\qquad
 \sum_{i=1}^s\omega(O_i)
 =\sum_{i=1}^s\omega(P_i)=|D\cap\ell|.
\]
Choosing the side with fewer components and then minimizing proves
$\widehat b_o(G)\leq\widehat b(G)$.  Minimizing the total weight
proves $b_o(G)\leq b(G)$.

For the classical reverse inequality, take a tree overpass
presentation.  Neither the overpass forest nor its complementary
forest has self-crossings: at every crossing the overpassing strand
belongs to the first forest and the underpassing strand to the
second.  Lift the first forest to a plane above the projection plane
and the second to a plane below it, joining their boundary points by
short monotone collars.  This preserves all crossing information,
and the two forests are trivial tangles.  Their separating plane,
completed to a sphere, meets the graph in
$\sum_i|\partial O_i|=\sum_i\omega(O_i)$ points.  Hence
$b(G)\leq\sum_i\omega(O_i)$, and minimizing gives the equality.
This is the tree-presentation form of the classical overpass
comparison in \cite[Corollary~3.3]{blackwell2026}.
\end{proof}

\begin{figure}[htbp]
  \centering
  \includegraphics[width=.6\linewidth]{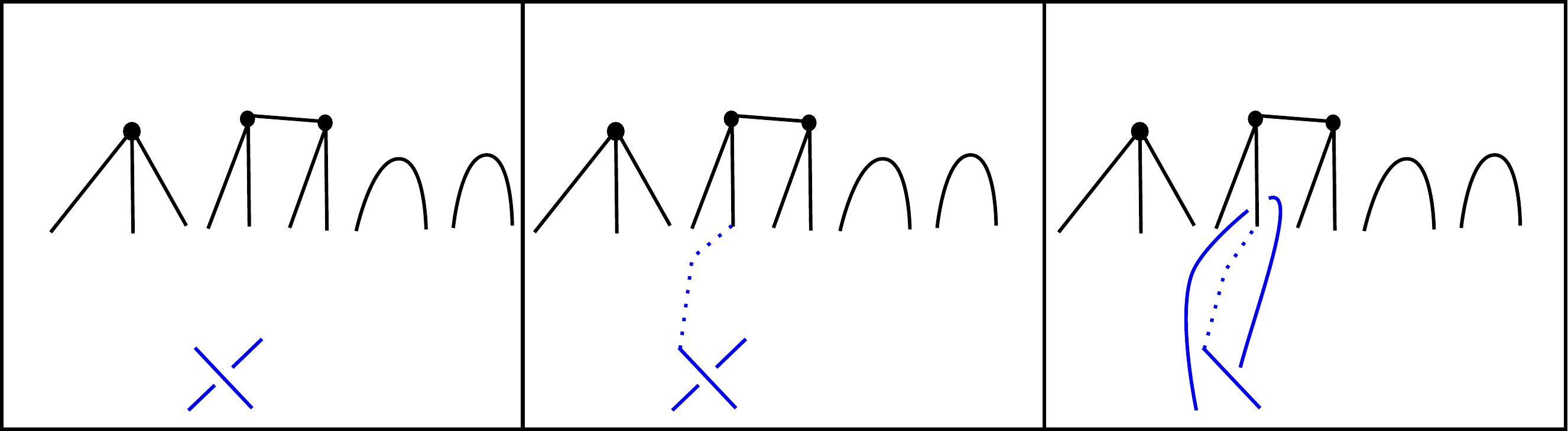}
  \caption{Dragging an undercrossing toward an upper tree component.}
  \label{fig:dragging}
\end{figure}

\begin{figure}[htbp]
  \centering
  \includegraphics[width=.6\linewidth]{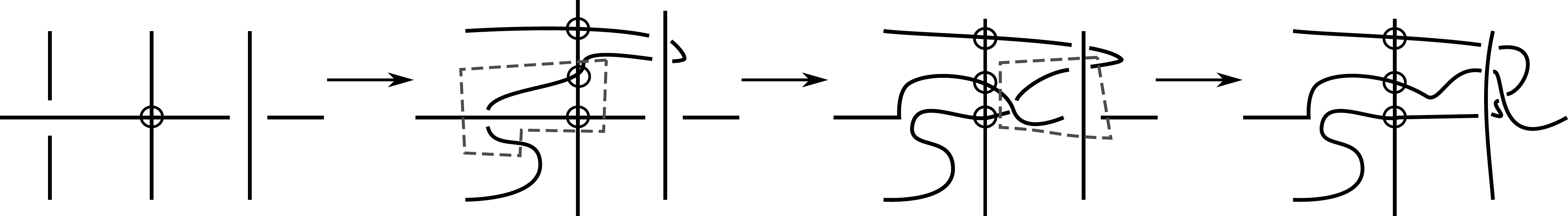}
  \caption{The Reidemeister moves used in the dragging operation.}
  \label{fig:dragging-move}
\end{figure}

\subsection{Virtual ravels with an unbounded overpass gap}
\label{subsec:virtual-ravels}

A virtual spatial graph is \emph{unknotted} if it is equivalent to a
crossingless planar diagram.  We call a virtual embedding of an
abstractly planar graph a \emph{virtual ravel} if it is not unknotted
but every subgraph homeomorphic to a disjoint union of circles is a
virtual unlink.  This is the virtual analogue of the usual ravel
condition \cite[Definition~1.2]{flapanMiller2017ravels}.  For a
bouquet, every circle subgraph is a single petal, and two circle
subgraphs cannot be vertex-disjoint.  Thus a nontrivial virtual
bouquet is a ravel precisely when each constituent petal is a
virtual unknot.

We first specify the local diagram and both stages of the
construction.  In Nelson--Vo's Kishino diagram
\cite[Example~6]{nelsonVo2006}, retain the right-hand two-crossing
half with semiarcs $e,f,g,h$, and join its $a$- and $e$-ends without
introducing classical crossings.  Denote the resulting diagram by
$U$.  It is a virtual unknot: after a virtual detour its two
oppositely signed classical crossings cancel by Reidemeister II.
The semiarc $g$ is the segment between the two overcrossings.
Remove a small open interval from $g$, obtaining an oriented
one-string virtual tangle $L$.  Write $g_-$ and $g_+$ for its
incoming and outgoing boundary semiarcs, respectively.  They are
distinct semiarcs until a boundary identification is imposed.

Take $n$ copies $L_1,\ldots,L_n$, and first join the outgoing end
of $L_i$ to the incoming end of $L_{i+1}$, with indices modulo $n$.
Let $c_i$ be the joining arc.  The result is one virtual knot
diagram $K_n$; these are the prescribed diagrammatic connected
sums along $g$.  All joining arcs may be routed with only virtual
crossings.  Next add an edge $s_i$ between $c_i$ and $c_{i+1}$ for
$1\leq i<n$.  On each intermediate arc $c_i$, use two distinct
attachment points, and let $j_i$ be the short interval between them.
Choose the attachments so that
\[
 E_n=s_1\cup\cdots\cup s_{n-1}\cup j_2\cup\cdots\cup j_{n-1}
\]
is a planar tree in a disk, with no classical crossings in that
disk.  Such a placement is obtained by arranging the joining arcs
along the boundary of a strip and connecting successive arcs
inside it; any additional routing intersections are virtual.
The resulting graph $\widetilde V_n$ is trivalent, and $E_n$
contains every graph vertex.  Contract $E_n$ to a single vertex
and call the resulting virtual spatial graph $V_n$.

The graph $K_n$ is a circle, and adjoining $n-1$ edges increases its
first Betti number from $1$ to $n$.  Contracting a tree preserves
this number.  Therefore $V_n$ is an $n$-petal bouquet, with one
vertex of valence $2n$.  Its petals are the tangles $L_i$ closed
through the contracted tree.  The case $n=3$ before contraction is
shown schematically in Figure~\ref{fig:ravel-expansion}.

\begin{figure}[htbp]
\centering
\begin{tikzpicture}[x=1.05cm,y=.95cm,line width=.65pt,
                    every node/.style={font=\small}]
  \foreach \x/\i in {0/1,3/2,6/3}{
    \draw (\x,-.36) rectangle (\x+1.3,.36);
    \node at (\x+.65,0) {$L_{\i}$};
  }
  \draw[->] (1.3,0)--(2.55,0);
  \draw (2.55,0)--(3,0);
  \draw[->] (4.3,0)--(5.7,0);
  \draw (5.7,0)--(6,0);
  \draw (7.3,0)--(7.8,0)--(7.8,-1.4)--(-.5,-1.4)--(-.5,0)--(0,0);
  \draw[->] (4,-1.4)--(3.5,-1.4);
  \draw[dashed,line width=1pt] (2.05,0)--(2.05,1.2)--(4.8,1.2)--(4.8,0);
  \draw[dashed,line width=1pt] (5.25,0)--(5.25,.85)--(8.45,.85)
                 --(8.45,-.7)--(7.8,-.7);
  \draw[line width=1.4pt] (4.8,0)--(5.25,0);
  \foreach \x/\y in {2.05/0,4.8/0,5.25/0,7.8/-.7}
    \fill (\x,\y) circle (2pt);
  \node[above] at (3.4,1.2) {$s_1\ (0)$};
  \node[above] at (6.85,.85) {$s_2\ (0)$};
  \node[below] at (2.05,-.02) {$c_1\ (1)$};
  \node[below] at (5.025,-.02) {$c_2\ (1)$};
  \node[below] at (3.8,-1.4) {$c_3\ (1)$};
\end{tikzpicture}
\caption{The trivalent expansion $\widetilde V_3$.  Each box is the
specified $g$-cut tangle $L$, including its classical and virtual
crossings.  Solid edges have flow $1$ and dashed edges have flow $0$.
The thick solid interval is $j_2$.  The tree
$E_3=s_1\cup j_2\cup s_2$ contracts to the bouquet vertex.
The schematic suppresses the crossings inside the boxes.}
\label{fig:ravel-expansion}
\end{figure}

Orient $K_n$ coherently and orient each $s_i$ arbitrarily.  Give
every edge of $K_n$ flow $1$ and every $s_i$ flow $0$.  This defines
a $\Z_2$-flow $\rho_n$ on the Y-oriented graph $\widetilde V_n$:
each vertex has two incident flow-$1$ edges and one flow-$0$ edge.
Notice that the short intervals $j_i$ retain flow $1$; they belong
to the contracted tree but were not newly adjoined edges.  We use
the $\Z_2$-family of $T_4$ and the flip at virtual crossings.

\begin{lem}[Independent colorings of the summands]
\label{lem:ravel-coloring-growth}
For the fixed flow $\rho_n$ above,
\begin{equation}\label{eq:ravel-coloring-growth}
 \#\operatorname{Col}_{T_4}(\widetilde V_n,\rho_n)\geq 2^{n+1}.
\end{equation}
\end{lem}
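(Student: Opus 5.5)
Since the flow on every edge of $K_n$ is $1$, the $T_4$-colouring equations inside each tangle $L_i$ are the ordinary biquandle equations with operation $\circ$. The plan is to compute the coloring behaviour of one cut tangle $L$, then glue $n$ copies along the joining arcs and impose the relations coming from the attached zero-flow edges $s_i$.

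\textbf{Step 1 (one summand).} From the right-hand half of Nelson--Vo's Kishino diagram, as recorded in Example~\ref{ex:t4-ae}, the equations are those of the listed table with $(a,b,c,d,e)$ replaced by $(e,f,g,h,a)$. Cutting $g$ into $g_-$ and $g_+$ drops the identification of these two semiarcs, so it can only increase the number of solutions. I will enumerate the colorings of $L$ by its incoming end color $u$ (on $g_-$ side, or equivalently on the end through which the joining arc enters). The target is to show that, for each incoming color $u$, there are at least two colorings of $L$, and that the outgoing end color is a fixed function $\psi(u)$ independent of the choice. Example~\ref{ex:t4-ae} suggests $\psi$ is $w$ or the identity, and in either case $\psi$ is a bijection with $\psi^2=\operatorname{id}$. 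This is a finite check with the $4\times4$ table. It is the step I expect to be the real obstacle: one must confirm, after the cut at $g$, which boundary semiarcs are $g_\pm$, and that the "two solutions per input" phenomenon from the example persists, with the doubling occurring in an interior semiarc.

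\textbf{Step 2 (vertices).} Each edge $s_i$ has flow $0$. At a vertex on $c_i$ with flows $(1,1,0)$ in some order, Lemma~\ref{lem:t4-zero-flow} gives colors $(x,w(x),x)$ with $x$ on a flow-$1$ edge. So passing through an attachment point, the color on $c_i$ changes by $w$ or stays the same, according to the chosen Y-orientation, and the color of $s_i$ is determined by $c_i$. Consequently, for each edge $s_i$, the two endpoint relations impose exactly one constraint linking the colors of $c_i$ and $c_{i+1}$, of the form $\lambda(\text{color of } c_i)=\lambda'(\text{color of } c_{i+1})$ with $\lambda,\lambda'\in\{\operatorname{id},w\}$. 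Crossings of $s_i$ and the $c_i$ are virtual with the flip switch, so colors simply pass through them.

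\textbf{Step 3 (gluing and counting).} Choose the color $u$ of $c_n$ freely (4 choices), and propagate around $K_n$. Each $L_i$ applies $\psi$ and offers at least $2$ internal choices, and the intervals $j_i$ apply $w$ or the identity. The cyclic closure condition becomes a composite of involutions from $\{\operatorname{id},w\}$, all of which commute. The $s_i$ constraints are conditions of the same commuting type. I will choose the Y-orientations of the $s_i$ so that every such constraint holds identically; the fact that the $s_i$ are arbitrarily oriented lets me fix this, and the paper's Example~\ref{ex:t4-ae} shows a consistent choice exists. If some constraint instead cuts the free color to the fixed points $\{1,3\}$ of $w$, the count is still $2\cdot 2^n=2^{n+1}$. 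Either way the count is at least $2\cdot2^n=2^{n+1}$, which establishes \eqref{eq:ravel-coloring-growth}.
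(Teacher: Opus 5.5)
The gap is Step~1, and Step~1 is the whole content of the lemma. You never show that $L$ has two colorings with a given end color, or that the other end then has color $\psi(u)$. Example~\ref{ex:t4-ae} does not supply this, because it treats the complementary cut. There the $a$- and $e$-ends are open and $g$ is a single semiarc, so "two solutions per incoming color, sent through $w$" describes the map $e\mapsto a$. In $L$ the $a$- and $e$-ends are identified and $g$ is cut. Your remark that cutting $g$ can only add solutions gives no information about individual end colors.

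The missing check is short. With $a=e$ and $g$ cut, the crossing relations are $g_-\circ f=h$, $f\circ g_-=e$, $g_+\circ h=f$, $h\circ g_+=e$. Write $x\circ y=Ax+By$ over $\mathbb{F}_2^2$ in the paper's coordinates, and set $u=g_-$, $v=f$. The first three relations give $h=Au+Bv$, $e=Av+Bu$ and $g_+=ABA\,u=(A+B)u=w(u)$. The fourth then reduces to $u_1=v_1+v_2$. So your guess is correct with $\psi=w$, and there are exactly two colorings for each $u$. For $u=1$ they are $(e,f,h)=(1,1,1),(3,4,4)$, and for $u=3$ they are $(1,2,2),(3,3,3)$. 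These are the four rows the paper exhibits and checks directly against the table.

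Steps~2--3 also contain two errors, though neither affects the final count.
\begin{enumerate}
\item You cannot choose the orientations of the $s_i$. The flow $\rho_n$ is fixed with the $s_i$ oriented arbitrarily, so the bound must hold for every choice. You also give no argument that some reorientation makes every end relation vacuous. Example~\ref{ex:t4-ae} points the other way: there the two end relations give the genuine constraint $e=w(a)$.
\item The color of the flow-$1$ strand never changes at an attachment point. In both orders $(1,0,1)$ and $(0,1,1)$ of Lemma~\ref{lem:t4-zero-flow}, and in their reversals, the two flow-$1$ edges carry the same color $x$; only the flow-$0$ edge can receive $w(x)$. The pattern $(1,1,0)$ cannot occur because $K_n$ is coherently oriented. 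So the intervals $j_i$ never apply $w$.
\end{enumerate}

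Drop the orientation argument and use only your fallback, which is exactly the paper's conclusion. Put a fixed $c\in\{1,3\}$ on every joining arc and on every $s_i$. Each vertex relation holds because $w(c)=c$. The closing condition $w^n(c)=c$ holds for all $n$, which you need when $n$ is odd. Each $L_i$ independently offers its two rows with $g_-=g_+=c$, and the two values of $c$ give disjoint sets. This yields $2\cdot2^n=2^{n+1}$ colorings.
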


\begin{proof}
In the modern sideways convention of
Section~\ref{subsec:kishino-graph}, the crossing equations of $L$
are
\begin{equation}\label{eq:ravel-block-relations}
 g_-\circ f=h,\qquad f\circ g_-=e,\qquad
 g_+\circ h=f,\qquad h\circ g_+=e.
\end{equation}
Here the two ends of the closing $e$-arc have already been
identified, while the two $g$-ends remain separate.  Substitution
in the displayed table for $\circ$ verifies each row below:
\begin{equation}\label{eq:ravel-block-colorings}
\begin{array}{c|ccc}
 g_-=g_+ & e&f&h\\ \hline
 1&1&1&1\\
 1&3&4&4\\ \hline
 3&1&2&2\\
 3&3&3&3
\end{array}
\end{equation}
We only need these four colorings, so no claim of completeness of
the tangle-coloring list is required.

Fix $c\in\{1,3\}$ and give every joining arc $c_i$ color $c$.
For each $L_i$, choose independently either of the two rows with
$g_-=g_+=c$.  All joining equations hold, so these choices give
$2^n$ distinct colorings of $K_n$.  Now give every added edge $s_i$
color $c$.  Since $w(c)=c$, all three underlying colors at every
vertex are $c$, and every ordered vertex relation in
Lemma~\ref{lem:t4-zero-flow} holds.  Thus all $2^n$ colorings
extend across the added edges.  The two choices of $c$ give
disjoint sets of colorings.  Their flow is the same $\rho_n$ in
both cases, proving \eqref{eq:ravel-coloring-growth}.
\end{proof}

\begin{proof}[Proof of Theorem~\ref{thm:kishino-overpass}]
We first determine the weighted overpass index.  In $V_n$, take
the central vertex together with the two initial portions of each
petal that contain its two overcrossings, stopping before the
undercrossings.  They form one overpass tree, a $2n$-valent pod.
The complement consists of $n$ proper arcs.  This gives
\[
 \widehat b_o(V_n)=1,\qquad b_o(V_n)\leq 2n.
\]
Conversely, in any tree overpass presentation of an $n$-bouquet,
the forest containing its unique vertex has at least one
component.  The component-weight formula shows that the number
of boundary ends of that forest is at least $2+(2n-2)=2n$.
Both complementary forests have the same boundary, so the total
overpass weight is at least $2n$, regardless of which forest
contains the vertex.  Hence
\begin{equation}\label{eq:ravel-overpass}
                         b_o(V_n)=2n.
\end{equation}

Now let $D=T_+\cup_\ell T_-$ be any bridge splitting of $V_n$,
and put $m=|D\cap\ell|$.  Suppose its unique vertex belongs to
$T_+$, and write $r=|\pi_0(T_+)|$ and $s=|\pi_0(T_-)|$.
Lemma~\ref{lem:weight} gives
\[
                   m=2r+2n-2=2s.
\]
In particular $s=r+n-1$, so $r$ is the smaller component count.
Expanding the vertex inside $T_+$ gives a trivalent representative
of $H_n=[V_n]_{\mathrm{hb}}$ with the same bridge splitting and
the same component counts.  Therefore
$\widehat b(H_n)\leq r$.  On the other hand, the fixed-flow bridge
bound and Lemma~\ref{lem:ravel-coloring-growth} give
\[
 2^{n+1}\leq
 \#\operatorname{Col}_{T_4}(\widetilde V_n,\rho_n)
 \leq 4^{\widehat b(H_n)}\leq 4^r.
\]
Thus $r\geq\lceil(n+1)/2\rceil$.  Since the splitting was
arbitrary,
\begin{equation}\label{eq:ravel-bridge-lower}
 b(V_n)\geq 2n-2+2\left\lceil\frac{n+1}{2}\right\rceil.
\end{equation}
Combining \eqref{eq:ravel-overpass} and
\eqref{eq:ravel-bridge-lower} yields
\[
 b(V_n)-b_o(V_n)
 \geq 2\left\lceil\frac{n+1}{2}\right\rceil-2
 \geq n-1,
\]
which tends to infinity.

It remains to verify the ravel condition for $V_n$.  After deleting
all other petals and suppressing the resulting bivalent vertex,
any one petal is exactly the closure of $L$, up to virtual detours.
It is therefore the virtual unknot $U$.  There are no subgraphs
consisting of two or more disjoint circles, because all petals
share the unique vertex.  Finally, $V_n$ is nontrivial for
$n\geq2$: its bridge index is strictly greater than $2n$, whereas
the planar $n$-bouquet has a bridge splitting into a $2n$-valent
pod and $n$ trivial arcs, and consequently has bridge index $2n$.
This proves that $V_n$ is a virtual ravel and completes the proof.
\end{proof}

The ravel assertion concerns the bouquets $V_n$.  The trivalent
expansions $\widetilde V_n$ are used to apply the fixed-flow
invariant to their common handlebody classes.  A ravel condition
does not require arbitrary proper graph subgraphs to be unknotted;
the next section distinguishes this stronger condition from the
ravel property in the classical constructions.

\section{Classical ravels and prescribed Euler characteristic}
\label{sec:correction}

A classical embedding of an abstractly planar graph is a \emph{ravel}
if it is nonplanar and every subgraph homeomorphic to a disjoint union
of circles is an unlink.  It is \emph{almost unknotted} if it is
nonplanar and every proper subgraph is planar.  The latter condition
implies the former.  For a two-petal bouquet the conditions coincide,
but for bouquets with more petals they need not coincide.

The main construction in this section is compatible tetrahedral
clasping.  We insert its three-strand version near a vertex and
iterate it in disjoint balls.  The fixed boundary colors give an
exponential coloring lower bound.  A constituent link omits at
least one of the three participating half-edges, which is enough
to undo the clasp.  Thus the same construction works for both
trivalent graphs and bouquets.

\subsection{A lower bound for almost unknotted graphs}

\begin{lem}\label{lem:two-components}
Let $\Gamma\subset S^3$ be a nontrivial almost unknotted embedding of
a connected planar graph with $\chi(\Gamma)<0$.  In any bridge
splitting, both trivial tangles have at least two components.
\end{lem}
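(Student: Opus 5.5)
We argue by contradiction.  Suppose some bridge splitting $\Gamma=T_+\cup_\ell T_-$ has a side, say $T_+$, consisting of a single component $P$; by Definition~\ref{def:trivial-tangle} this is a tree or a proper arc.  The plan is to show that $\Gamma$ must then be planar, which contradicts nontriviality.  The key tool is the almost unknotted hypothesis: every proper subgraph of $\Gamma$ is planar.

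The first step is to dispose of the case in which $P$ is a proper arc.  Then $T_+$ contains no vertex and meets $\ell$ in two points.  Hence $\Gamma$ meets the bridge sphere in two points, and $T_-$ is a trivial forest whose union with one arc gives $\Gamma$.  Euler characteristic then gives $\chi(\Gamma)=1+s-2=s-1\geq0$, because $T_-$ has $s\geq1$ components.  This contradicts $\chi(\Gamma)<0$.  So $P$ is a tree containing at least one graph vertex.

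The second step handles a tree $P$.  Since $P$ is a trivial tangle, it lies in a ball $B_+$ and is isotopic, rel boundary, into a disk $\Delta\subset B_+$ together with a neighborhood of $\partial B_+$.  Equivalently, $P$ is boundary parallel: it is a single unknotted tree, a cone on its endpoints.  Collapsing $B_+$ along this cone shrinks $P$ to a single vertex lying on the bridge sphere.  This produces a spatial graph isotopic to one in which $\Gamma=T_-\cup(\text{cone})$.  Now use $\chi(\Gamma)<0$ to produce an edge $e$ of $\Gamma$ lying inside $T_-$, so that the proper subgraph $\Gamma-e$ is still connected.  Such an edge exists because the first Betti number is at least $2$, so some edge lies on a cycle, and cycles cannot lie in the forests alone.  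By hypothesis $\Gamma-e$ is planar.  The aim is to show that the cone structure forces any planar embedding of $\Gamma-e$ compatible with $T_-$ to extend across $e$.  Equivalently, $T_-$ together with the cone is a graph in bridge position with respect to a sphere whose upper side is a single disk-cone, and such a graph is planar whenever $T_-$ is trivial: the trivializing disks of the lower arcs and trees can be taken disjoint and glued to the cone disk $\Delta$.  This gives a planar embedding of $\Gamma$.

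I expect the main obstacle to be this last gluing step.  Trivial lower tangles admit disjoint bridge disks, and the single upper tree is isotopic to a standard planar cone.  What is unclear is whether the lower disks can be chosen so that their traces on the sphere fit the cyclic order of the cone's endpoints without crossings.  I would first attempt it via the standard argument for $1$-bridge presentations of knots.  There the single upper arc can be slid so that all lower bridge disks are disjoint from it, so the upper disk and the lower disks together form a planar surface containing $\Gamma$.  If this fails in general, I would fall back on the almost-unknotted hypothesis to fix the ordering locally, using that deleting any single lower arc gives a planar graph.  This yields the contradiction, and by symmetry of the argument the same holds for $T_-$.
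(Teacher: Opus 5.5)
There is a genuine gap in the tree case. Your proper-arc case is fine: with only two intersection points, $\chi(\Gamma)=1+s-2\geq0$.

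The problem is the claim that a trivial tree $P$ is ``a cone on its endpoints.'' A trivial tree is boundary parallel, but it is a cone only when it has a single vertex. If $P$ has several vertices, collapsing it to a point contracts edges of $\Gamma$. That changes the abstract graph and the spatial-graph type, so it is not an isotopy, and planarity of the collapsed graph says nothing directly about $\Gamma$.

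In fact the intermediate claim you aim for is false: one connected tree on one side plus a trivial forest on the other does not force planarity. Put a nontrivial two-bridge knot $K$ in two-bridge position and join its two upper arcs by a planar arc $\tau$ in the upper ball. The theta-curve $K\cup\tau$ has a bridge splitting with a single H-shaped trivial tree above and two arcs below, yet it contains the knotted cycle $K$. So almost unknottedness must do real work, and your step with $\Gamma-e$ never uses it; the concluding ``equivalently'' sentence falls back on the cone picture. (For a genuine pod, your worry about cyclic order is unnecessary: any homeomorphism of the boundary sphere extends over a cone by coning, so the lower braid is absorbed at the vertex.)

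The missing idea is to pass to the regular neighborhood. Contracting the internal edges of $P$ is a change of spine that preserves $N(\Gamma)$. The coning argument then shows the contracted spine is planar, so $N(\Gamma)$ is an unknotted handlebody. Its exterior is a handlebody of genus $1-\chi(\Gamma)\geq2$, so its boundary is compressible. This contradicts the Ozawa--Tsutsumi theorem that a nontrivial minimally knotted embedding of a planar graph has exterior with incompressible boundary. That theorem is where almost unknottedness and $\chi(\Gamma)<0$ enter, and this is the paper's argument.
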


\begin{proof}
Suppose the upper tangle is a single trivial tree.  Contract its
internal edges to obtain a pod, preserving the regular neighborhood
of the graph.  A trivial pod is equivalent to radial segments from
the center of a ball to its boundary leaves.  A homeomorphism of the
boundary sphere extends over this pod by coning.  Consequently the
boundary braiding of the lower trivial forest can be absorbed at the
pod, and the resulting spine is planar.  Thus $N(\Gamma)$ is an
unknotted handlebody.

Its exterior is also a handlebody, with compressible boundary of
genus $1-\chi(\Gamma)\geq2$.  This contradicts the theorem of
Ozawa--Tsutsumi \cite{OzawaTsutsumi2003}: a nontrivial minimally
knotted embedding of a planar graph has exterior with incompressible
boundary.  Interchanging the two sides proves the assertion for the
lower tangle as well.  The contraction in this argument is a change
of spine; it is used to identify the regular neighborhood.
\end{proof}

\begin{cor}\label{cor:lowerboundarygvb}
A connected almost unknotted classical embedding $\Gamma$ of a
planar graph with $\chi(\Gamma)=-x<0$ satisfies
\[
                         b(\Gamma)\geq x+4.
\]
\end{cor}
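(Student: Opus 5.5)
Take a bridge splitting $D=T_+\cup_\ell T_-$ of $\Gamma$ that realizes $b(\Gamma)$, and set $r=|\pi_0(T_+)|$, $s=|\pi_0(T_-)|$ and $m=|D\cap\ell|=b(\Gamma)$. The argument has two ingredients. The first is the Euler characteristic identity from the end of Section~\ref{sec:bridge}, namely $\chi(\Gamma)=r+s-m$. It holds because $\Gamma$ is obtained by gluing the two forests along the $m$ points of $D\cap\ell$. Each forest has Euler characteristic equal to its number of components, and each point of $D\cap\ell$ is counted once in both forests. Rearranging gives
\[
 b(\Gamma)=m=r+s-\chi(\Gamma)=r+s+x .
\]

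The second ingredient is Lemma~\ref{lem:two-components}. Its hypotheses are exactly those of the corollary, since $\Gamma$ is connected, planar, almost unknotted and has $\chi(\Gamma)=-x<0$. The nontriviality assumption is also available, because an almost unknotted embedding is nonplanar by definition. The lemma then gives $r\geq2$ and $s\geq2$. Substituting into the identity above yields $b(\Gamma)\geq 2+2+x=x+4$. Since the splitting was a minimizing one, this is the claimed bound.

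The only step that needs care is the identity $\chi=r+s-m$. One must confirm that no closed cycles sit inside either tangle. This is guaranteed by Definition~\ref{def:trivial-tangle}, where every component is a tree or a proper arc. One must also check that the count is unaffected by vertices of valence two or by the chosen trivalent expansion; this holds because $\chi$ is a homotopy invariant of the underlying graph. All of the genuine topology, through Ozawa--Tsutsumi, is contained in Lemma~\ref{lem:two-components}, so this corollary is a short bookkeeping step.
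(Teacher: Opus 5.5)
Your proof is correct and follows the paper's argument: the Euler characteristic identity $\chi(\Gamma)=r+s-m$, combined with Lemma~\ref{lem:two-components} giving $r,s\geq2$, yields $m=x+r+s\geq x+4$. You also state explicitly a point the paper leaves implicit, namely that nonplanarity, which is part of the definition of almost unknotted, supplies the lemma's nontriviality hypothesis.
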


\begin{proof}
If the two tangles have $c_+$ and $c_-$ components and the bridge
sphere meets the graph in $q$ points, then
$-x=c_++c_--q$.  Lemma~\ref{lem:two-components} gives
$q=x+c_++c_-\geq x+4$.
\end{proof}

\subsection{Compatible tetrahedral clasping}
\label{subsec:tetrahedral-clasping}

Let $C$ be the oriented three-strand tangle in the upper-left
panel of Figure~\ref{fig:tetrahedral-clasping}.  A \emph{tetrahedral
clasp replacement} replaces a trivial three-strand tangle by $C$,
matching its six endpoints, their pairing, and their orientations.
The replacement is supported in a ball and changes no vertices or
edges of the abstract graph.  The other panels show longer versions
of the compatible clasping pattern.  The constructions below use
copies of the three-strand panel.

\begin{figure}[!htbp]
  \centering
  \includegraphics[width=.98\linewidth]{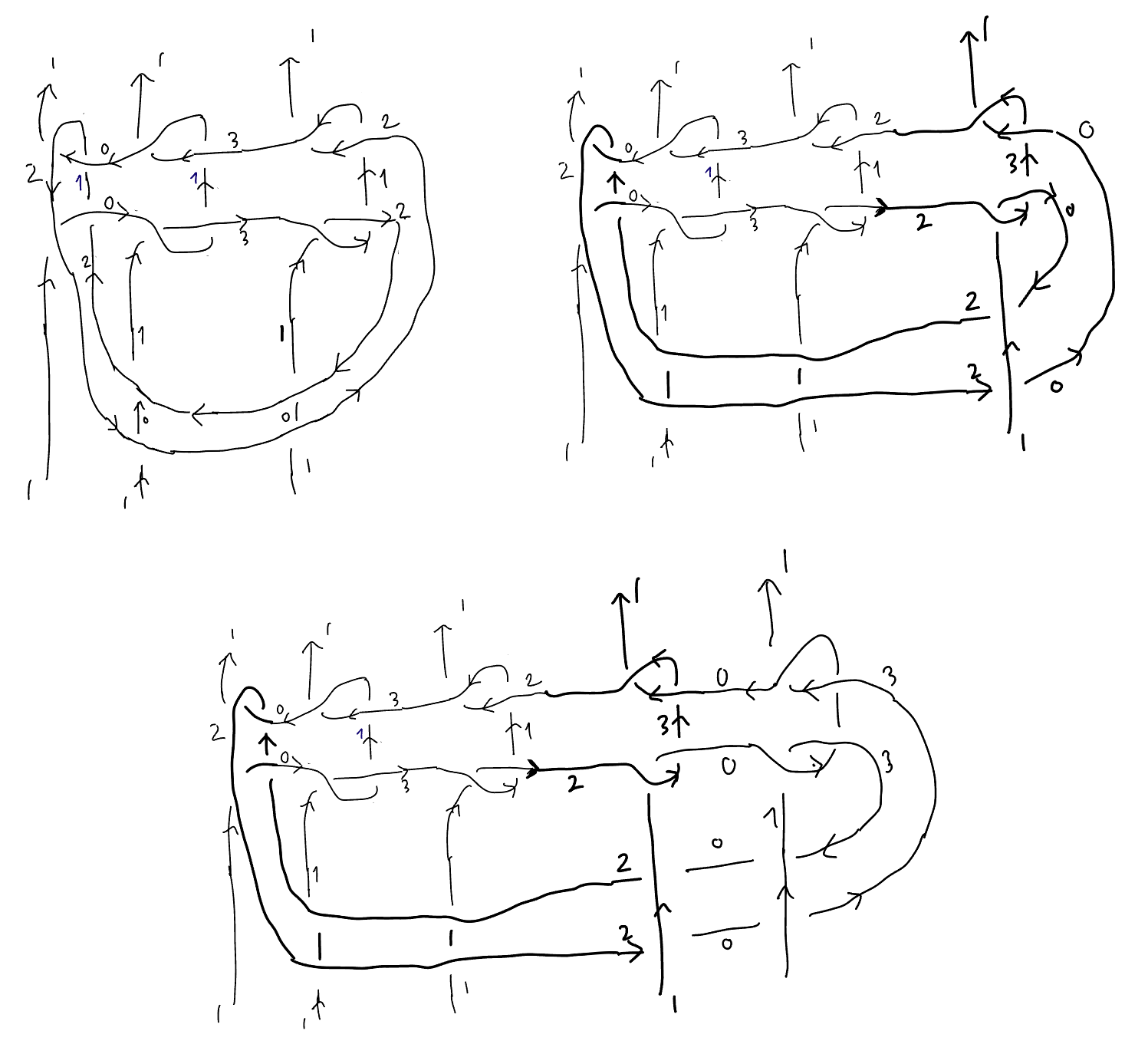}
  \caption{Compatible tetrahedral clasping, with its nonconstant
  colorings.  The labels $0,1,2,3$ are quandle colors, and all
  boundary endpoints have color $1$.  The upper-left three-strand
  tangle is the replacement $C$ used in the proofs.  The remaining
  panels display longer clasping patterns.  These color labels
  are distinct from the $\Z_3$ flow values used below.}
  \label{fig:tetrahedral-clasping}
\end{figure}

\begin{lem}[Compatible tetrahedral clasping]
\label{lem:tetrahedral-clasping}
The tangle $C$ has the following properties.
\begin{enumerate}
\item Deleting any one of its three strands leaves the trivial
tangle with the same surviving endpoints and pairing, relative
to the boundary of the replacement ball.
\item For each $t\in T$, there are at least four $T$-colorings of
$C$ assigning $t$ to all six boundary endpoints.
\end{enumerate}
\end{lem}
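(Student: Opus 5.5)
The plan is to treat the two properties separately. Both can be checked directly on the explicit three-strand diagram $C$ in the upper-left panel of Figure~\ref{fig:tetrahedral-clasping}.

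For property (1), fix one of the three strands, say strand $i$, and delete it. The remaining two strands meet only at crossings that occurred between themselves in $C$. The compatible clasping pattern is designed so that every clasp between two strands is threaded through the third. We therefore first list the crossings of $C$ by which pair of strands they involve. After deleting strand $i$, the surviving crossings between the other two strands should occur in cancelling pairs, one over and one under, in the same relative position. This is the feature that makes the pattern ``compatible''. Each such pair is then removed by a Reidemeister II move. The result is a crossingless two-strand tangle with the original surviving endpoints and pairing, and it is trivial relative to the boundary of the ball. We do this for each of the three choices of deleted strand. If the diagram has a cyclic symmetry permuting the strands, we check one case and invoke the symmetry; otherwise we check the three cases in turn. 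The main thing to confirm here is that no linking survives between the two remaining strands. Since the panel is drawn so that each pairwise clasp alternates sign as it passes the third strand, I expect this to be routine.

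For property (2), we use the tetrahedral quandle $T$ with the ordinary crossing rule $x\triangleright y$, since $C$ has no vertices. Its colorings form a finite set of equations. The constant coloring with value $t$ is always one coloring. The figure displays the nonconstant colorings with all boundary endpoints colored $1$. The plan is to verify directly from the operation table of $T$ that each of the three displayed nonconstant labelings satisfies the crossing relation $c=a\triangleright b$ at every crossing, where $b$ is the over-color. Together with the constant coloring this gives four colorings with boundary color $1$.

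To pass from $t=1$ to arbitrary $t$, we would use transitivity of $T$ via its automorphisms. The right translations $S_y$ are quandle automorphisms of $T$, since axiom (iii) says exactly that $S_z$ is a homomorphism. The group they generate acts transitively on $\{0,1,2,3\}$. For instance, reading the table, $S_0$ sends $1\mapsto 3\mapsto 2\mapsto 1$ and fixes $0$, so together with $S_1$ the action is transitive. Composing a coloring with an automorphism $\alpha$ yields a coloring, since $\alpha(a\triangleright b)=\alpha(a)\triangleright\alpha(b)$. This gives a bijection between colorings with boundary color $1$ and those with boundary color $\alpha(1)=t$, so there are at least four colorings for every $t\in T$.

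The main obstacle is the honest verification of the displayed colorings crossing by crossing against the table, with correct orientation conventions. Each strand's orientation determines which under-arc is the incoming one. With $S_y$ of order three this matters, so we will fix the upward orientations of Figure~\ref{fig:tetrahedral-clasping} and read each crossing as (incoming under) $\triangleright$ (over) $=$ (outgoing under).
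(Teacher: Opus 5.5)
Your transitivity step in (2) matches the paper's. The right translations are automorphisms, $S_0=(1\ 3\ 2)$ and $S_1=(0\ 2\ 3)$ act transitively, and an automorphism carrying $1$ to $t$ transports colorings with boundary color $1$ to colorings with boundary color $t$.

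The gap is in how you get four colorings when $t=1$. You plan to verify ``each of the three displayed nonconstant labelings,'' but the upper-left panel shows only one nonconstant coloring of $C$. The paper's proof refers to ``the displayed nonconstant coloring,'' and the other panels are longer clasping patterns, not further colorings of $C$. Checking that one coloring and adding the constant coloring gives only two.

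The missing idea is to use automorphisms that \emph{fix} the boundary color, not only ones that move it. The translation $S_1$ fixes $1$ and cyclically permutes $0,2,3$. Applying $\operatorname{id}$, $S_1$, $S_1^2$ to the displayed coloring therefore keeps all six boundary labels equal to $1$ and yields three colorings. They are pairwise distinct, and distinct from the constant coloring. The displayed coloring uses some label $u\neq1$ on some semiarc, and $u$, $S_1(u)$, $S_1^2(u)$ are three different elements of $\{0,2,3\}$. Together with the constant coloring this gives the required four, with only one crossing-by-crossing check against the table.

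For (1), your plan of cancelling pairs of surviving two-strand crossings by Reidemeister~II is close to the paper's argument, but it needs two corrections. First, there is no cyclic symmetry. The paper distinguishes the strand carrying the long return arcs.
\begin{itemize}
\item Deleting the return strand lets the other two strands straighten independently.
\item Deleting either other strand opens a gap through which the return arcs must first be pulled. Only after this isotopy do the two opposite crossings of the surviving clasp sit in Reidemeister~II position.
\end{itemize}
Second, once opened, the return strand may retain a self-crossing. That is not a crossing between the two surviving strands, so your pairwise bookkeeping would miss it. It is removed by a Reidemeister~I move. Your conclusion that Reidemeister~II moves alone leave a crossingless tangle should be amended accordingly.
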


\begin{proof}
For (i), distinguish the strand containing the long return arcs
from the other two strands.  If the return strand is deleted, the
remaining two strands straighten independently.  If either of the
other strands is deleted, its place provides a gap through which
the return arcs can be pulled.  At the surviving clasp, the two
opposite crossings then cancel by a Reidemeister II move; the
opened return straightens, and any remaining kink is removed by
a Reidemeister I move.  These moves take place inside the ball
and fix its boundary.  This is the three-strand clasp-deletion
argument; compare \cite[Section~3.2]{jang2016brunnian}.

For (ii), first take $t=1$.  To check the displayed nonconstant
coloring, the right translations of the table of $T$ are
\[
 S_0=(1\ 3\ 2),\quad S_1=(0\ 2\ 3),\quad
 S_2=(0\ 3\ 1),\quad S_3=(0\ 1\ 2).
\]
At a crossing use the indicated right translation or its inverse,
according to the normal direction of the overpassing strand.
In particular, on the successive horizontal portions under a
color-$1$ strand, the displayed labels change by $S_1$ in one
direction and by $S_1^{-1}$ on the return.  The translations
$S_0,S_2,S_3$ check the closing crossings.  The six exterior
labels are all $1$, as drawn.

Every right translation is a quandle automorphism.  Apply
$\operatorname{id},S_1,S_1^2$ to the displayed coloring.  Their
effect on the labels is
\[
\begin{array}{c|rrrr}
 &0&1&2&3\\ \hline
\operatorname{id}&0&1&2&3\\
S_1&2&1&3&0\\
S_1^2&3&1&0&2
\end{array}
\]
so all boundary labels stay $1$.  The three interior colorings
are distinct, since the displayed coloring uses a label different
from $1$.  The constant coloring with value $1$ is a fourth
coloring.  Finally, the inner automorphism group acts transitively
on $T$: the translations above carry $1$ to each of $0,2,3$.
An automorphism carrying $1$ to $t$ transports these four
colorings to four colorings with boundary color $t$.
\end{proof}

\begin{lem}[Independent clasp choices]\label{lem:clasp-iteration}
Let $D_0$ be a connected planar graph diagram with an orientation
for which the equal-vertex $T$-coloring rule is invariant.  At one
vertex choose three distinct incident half-edges.  Insert $k\geq1$
copies of $C$ in disjoint balls, each meeting those same half-edges
once in a sufficiently small vertex collar.  Write $D_k$ for the
result.  Then
\[
                 \#\operatorname{Col}_T(D_k)\geq4^{k+1}.
\]
Every subgraph of $D_k$ homeomorphic to a disjoint union of circles
is an unlink.  In particular, $D_k$ is a ravel.
\end{lem}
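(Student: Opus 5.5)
Two claims need proof: the coloring lower bound, and the ravel property together with nonplanarity. For the count, start from the constant colorings of $D_0$. In the equal-vertex rule every semiarc of $D_0$ gets the same color, so $D_0$ has at least $|T|=4$ colorings. Fix such a color $t$. Each copy $C_i$ of $C$ sits in a small collar ball. With the exterior held at the constant color $t$, its six boundary endpoints all receive $t$. By Lemma~\ref{lem:tetrahedral-clasping}(ii) there are at least four colorings of $C_i$ with these boundary values.

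Because the balls are disjoint and the colorings agree with the constant coloring outside them, the choices in different balls are independent. Hence, for each $t$, we get $4^k$ colorings of $D_k$. Colorings built from different values of $t$ are distinct, since they differ on the exterior semiarcs. This gives $\#\operatorname{Col}_T(D_k)\geq 4\cdot 4^k=4^{k+1}$. I would also note that the clasp replacement does not change the vertex orientations, so the orientation hypothesis, and hence invariance via Proposition~\ref{prop:equal-vertex}, persists for $D_k$.

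For the ravel condition, let $Z\subset D_k$ be a subgraph homeomorphic to a disjoint union of circles. At the chosen vertex $v$, $Z$ uses either zero or two of the incident half-edges, since every vertex of a circle has degree two. So $Z$ omits at least one of the three chosen half-edges $h_1,h_2,h_3$. Discard from the graph everything not in $Z$. In each ball the strand lying along an omitted half-edge disappears. By Lemma~\ref{lem:tetrahedral-clasping}(i), each copy of $C$ then becomes trivial relative to its boundary.

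The undoing isotopies are supported in disjoint balls, so they can be carried out one copy at a time. This turns $Z$ into the corresponding subgraph of $D_0$, which is planar, and so $Z$ is an unlink. The one subtlety is that the strand deleted must be the same for all copies. This holds because each copy meets the same three half-edges once. If $Z$ misses several half-edges, we simply delete more strands, which only simplifies the tangle.

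Finally, nonplanarity. The planar embedding of the abstract graph of $D_k$ is $D_0$ (abstract graph and planar diagram as given), which has exactly $4$ colorings. A planar diagram is crossingless and connected, so the equal-vertex rule forces a constant coloring. Since $4^{k+1}>4$ for $k\geq1$, $D_k$ is not planar, and the invariance of the coloring count gives this.

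The step I expect to be the main obstacle is the independence-and-deletion bookkeeping. One must check that the collar balls really can be chosen disjoint, each meeting the three half-edges in the prescribed trivial three-strand tangle with compatible pairing and orientation. One must also check that Lemma~\ref{lem:tetrahedral-clasping}(i) applies with the remaining pieces of $Z$ outside the balls left untouched. I would settle both by taking concentric collars along the three half-edges near $v$ and nesting the $k$ balls successively along them.
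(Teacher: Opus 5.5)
Your proposal is correct and follows the paper's proof essentially step for step. You hold the exterior at a constant color $t$ and take four independent choices in each ball from Lemma~\ref{lem:tetrahedral-clasping}(ii), giving $4^{k+1}$ colorings. A circle subgraph uses zero or two half-edges at the vertex, so each clasp loses a strand and is undone inside its own ball by part~(i). Nonplanarity follows because any connected crossingless diagram has exactly four equal-vertex $T$-colorings.
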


\begin{proof}
The three chosen half-edges are disjoint arcs in a ball about the
vertex.  Prepare $k$ disjoint short collars along them, each a
ball containing a trivial three-strand tangle.  For each finite
$k$ these collars fit between the vertex and the ends of the
chosen small half-edges.  Identify each tangle, with its
orientations, with the trivial tangle replaced by $C$.

Fix $t\in T$ and assign $t$ to every portion of $D_k$ outside
the replacement balls.  At the boundary of each ball all six
labels are consequently $t$.  Lemma~\ref{lem:tetrahedral-clasping}
gives four choices inside each ball.  They agree with the fixed
outside coloring and can be chosen independently.  Different
choices are distinguished by restriction to the ball in which
they differ.  This gives $4^k$ colorings for each $t$; varying
$t$ gives $4^{k+1}$, since the color at the original vertex
distinguishes the four sets.  This argument counts a specified
subset of colorings and does not require an exact count.

Now let $L$ be a subgraph homeomorphic to a disjoint union of
circles.  At the chosen vertex, $L$ uses either zero or two
incident half-edges.  It therefore omits at least one of the three
selected half-edges in every replacement ball.  Part (i) of
Lemma~\ref{lem:tetrahedral-clasping} undoes the replacement on
the surviving strands, fixing the boundary of that ball.  If more
strands are absent, restrict the same isotopy to those that remain.
The balls are disjoint, so all these isotopies can be performed
without interference.  The resulting link is a subgraph of the
planar diagram $D_0$, and hence is an unlink.

A connected planar diagram has exactly four equal-vertex
$T$-colorings, since all its edges must have the same color.
The invariant count above is greater than four, so $D_k$ is
nonplanar.  Together with the constituent-link assertion, this
proves the ravel property.
\end{proof}
\FloatBarrier

\subsection{Non-Eulerian ravels at every negative Euler characteristic}
\label{subsec:non-eulerian}

For $x\geq1$, let $\Lambda_x$ be the following connected planar
trivalent graph.  Set $\Lambda_1=\theta_3$.  For $x\geq2$,
start with a cycle on $2x$ vertices and double each edge in an
alternating perfect matching.  Thus $\Lambda_2$ is a square with
two opposite edges doubled, and $\Lambda_3$ is a hexagon with
three alternate edges doubled.  There are $2x$ vertices and
$3x$ edges, giving $\chi(\Lambda_x)=-x$.  All vertices have
odd valence, so these graphs are non-Eulerian.

\begin{prop}\label{thm:non-eulerian}
For every $x,k\geq1$, there is a classical ravel $G_{x,k}$ with
underlying abstract graph $\Lambda_x$ such that
\[
 \chi(G_{x,k})=-x,\qquad b(G_{x,k})\geq x+2k+2.
\]
For one fixed $\Z_3$-flow $\rho_{x,k}$ it has at least $4^{k+1}$
tetrahedral-family colorings.  When $x=1$, these examples are
almost unknotted theta-curves.
\end{prop}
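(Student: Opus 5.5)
Start with the planar diagram $D_0$ of $\Lambda_x$ and orient it so that the equal-vertex $T$-coloring rule is invariant. Since $T$ has type three, Proposition~\ref{prop:equal-vertex} requires every vertex to be a source or a sink. I will make $\Lambda_x$ bipartite with respect to the vertex orientation. For $x\geq2$, the $2x$-cycle is bipartite and each doubled edge joins consecutive cycle vertices, so alternating sources and sinks around the cycle and orienting every edge from source to sink works. For $x=1$, $\theta_3$ has two vertices and all three edges are directed from one to the other. In both cases $\epsilon(v)=\pm3$, which is divisible by $3$.

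At one vertex, say a source, choose its three incident half-edges. These are distinct even when two of them belong to a doubled edge. Apply Lemma~\ref{lem:clasp-iteration} with $k$ copies of $C$, matching the orientations of $C$ to the outgoing half-edges; the lemma needs the half-edges oriented compatibly with $C$. Call the result $G_{x,k}$. The lemma gives at least $4^{k+1}$ equal-vertex $T$-colorings and shows that every circle-union subgraph is an unlink, so $G_{x,k}$ is a ravel. The abstract graph is unchanged, so $\chi=-x$.

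For the statement about a $\Z_3$-flow, translate the equal-vertex rule into the $\Z_3$-family of $T$. Use the operations $\ustar^{\,a}=S^a$ and $\ostar$ trivial, with every edge carrying flow $1$ directed source to sink. After a Y-orientation conversion this gives a fixed flow $\rho_{x,k}$ whose fixed-flow count equals the equal-vertex count. One could also simply record the equal-vertex count. The bridge bound should not depend on this identification anyway.

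The bridge bound is where the real work lies. The plan is to run the argument of Theorem~\ref{thm:g-family-bound} directly for equal-vertex colorings on a bridge splitting of $G_{x,k}$ itself, with no change of spine. Put a realizing splitting into forest–braid–forest form. In a crossing-free tree the equal-vertex rule forces a single color, so each component of the smaller-count side is determined by one color. Propagation through the classical braid is bijective, so the number of colorings is at most $4^{\min(c_+,c_-)}$. This gives $4^{k+1}\leq 4^{\min(c_+,c_-)}$, hence $c_\pm\geq k+1$. Invariance under the moves used to reach the splitting is Proposition~\ref{prop:equal-vertex}. Euler characteristic then gives $q=x+c_++c_-\geq x+2k+2$.

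The main obstacle is keeping the orientation consistent throughout. The bridge-position diagram must carry the same source/sink orientation, and trivial-tangle components must be handled when they contain vertices of both kinds. This is harmless, because on a crossing-free tree all colors are equal regardless of orientation.

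For $x=1$, almost unknottedness requires that deleting any one edge of the theta-curve leaves a planar (unknotted) circle. A constituent circle uses two of the three half-edges, so this follows from Lemma~\ref{lem:clasp-iteration}'s unlinking argument. Nonplanarity holds because the coloring count exceeds $4$.
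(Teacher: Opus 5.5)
Your proposal is correct and follows the paper's proof essentially step by step: a bipartite source/sink orientation, $k$ clasp replacements via Lemma~\ref{lem:clasp-iteration}, a one-color-per-tree propagation bound applied to each side followed by the Euler-characteristic count, and the omitted-strand argument for $x=1$. Your equal-vertex propagation is just the paper's fixed-flow bound specialized to this flow, as in Lemma~\ref{lem:trivial-coloring}; the one step you leave vague is the ``Y-orientation conversion'', which the paper makes explicit by reversing a perfect matching of $\Lambda_x$ (e.g.\ the undoubled cycle edges, or one edge of $\theta_3$) and giving those edges flow $2=-1\in\Z_3$, noting that reversing an edge while negating its flow leaves the powers $S_y^{\pm1}$ at its crossings unchanged and that all diagonal maps are the identity, so the fixed-flow colorings are exactly the equal-vertex colorings.
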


\begin{proof}
The graph $\Lambda_x$ is bipartite.  Orient every edge from the
first part to the second; every vertex is then a source or a
sink, with signed valence $-3$ or $3$.  The equal-vertex
$T$-coloring rule is therefore invariant by
Proposition~\ref{prop:equal-vertex}.

Start with a planar embedding of this oriented graph, choose a
vertex, and apply $k$ tetrahedral clasp replacements to its three
incident half-edges as in Lemma~\ref{lem:clasp-iteration}.  The
result is $G_{x,k}$.  Its abstract graph and Euler characteristic
are unchanged.  That lemma proves the ravel property and gives
at least $4^{k+1}$ colorings.

To express the count using one fixed flow, choose a perfect
matching $M$ of $\Lambda_x$.  Reverse the edges of $M$ and give
them flow $2=-1$ in $\Z_3$; give every other edge flow $1$.
Each vertex is now Y-oriented, with the flow equation $1+1=2$.
For the parallel quandle family the crossing operations are
$u\ustar^{\,j}v=S_v^j(u)$ and $u\ostar^{\,j}v=u$.
Reversing an edge and negating its flow leaves the signed powers
at its crossings unchanged.  The diagonal maps are the identity,
so the vertex rule remains equality of the three colors.  Thus
the colorings above give at least $4^{k+1}$ colorings for this
single flow $\rho_{x,k}$.

In any bridge splitting, the fixed-flow propagation bound gives
\[
 4^{k+1}\leq4^{c_+},\qquad 4^{k+1}\leq4^{c_-},
\]
where $c_\pm$ are the two tree-component counts.  Hence
$c_+,c_-\geq k+1$, and additivity of Euler characteristic gives
\[
 |G_{x,k}\cap S|=x+c_++c_-\geq x+2k+2.
\]
Minimizing proves Theorem~\ref{thm:anyEuler}(i).  For $x=1$,
deleting any theta edge removes a participating strand in every
ball, so the same deletion isotopies give an unknot.  These
theta-curves are therefore almost unknotted.
\end{proof}

\subsection{Bouquets and even bridge indices}
\label{subsec:even-bouquets}

The initial bouquet construction in \cite{blackwell2026},
in the proof of Theorem~1.2, gives the following
sharp minimum.  We use this initial construction separately from
the subsequent three-strand replacement in that proof: a replacement
supported on only three petals does not establish almost
unknottedness after deletion of a petal outside its support.

\begin{prop}[The initial bouquet family]\label{prop:bouquet-family}
For every $r\geq2$, there is an almost unknotted classical
$r$-petal bouquet $B_r$ with $b(B_r)=2r+2$.  This is the minimum
bridge index among almost unknotted $r$-petal bouquets.
Every bridge splitting of an $r$-petal bouquet has an even number
of intersections.
\end{prop}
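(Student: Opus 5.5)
The proposition has three assertions, and I would prove them in the order parity, lower bound, construction.

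\emph{Parity.} Let $T_+\cup_\ell T_-$ be a bridge splitting of an $r$-petal bouquet. The unique vertex has valence $2r$, and it lies in one of the two tangles, say $T_+$. Let $c_\pm$ be the component counts. Lemma~\ref{lem:weight} gives
\[
 |D\cap\ell| = 2c_+ + (2r-2) = 2c_-.
\]
Both expressions are even. This proves the last sentence, and it applies to every bouquet splitting, not only to almost unknotted ones.

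\emph{Lower bound.} Let $\Gamma$ be an almost unknotted $r$-petal bouquet. It is connected and planar, and $\chi(\Gamma)=1-r<0$ for $r\geq2$. Lemma~\ref{lem:two-components} therefore gives $c_+\geq2$. The formula above then yields $|D\cap\ell|\geq 4+2r-2=2r+2$. One could instead quote Corollary~\ref{cor:lowerboundarygvb}, which gives only $(r-1)+4=r+3$. The parity refinement, which counts the side containing the vertex, is what produces the sharper bound $2r+2$, so I would use it.

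\emph{Construction.} I would take $B_r$ to be the initial bouquet in the proof of \cite[Theorem~1.2]{blackwell2026}, before the subsequent three-strand replacement. Two things must be checked.
\begin{enumerate}
\item[(a)] $B_r$ is almost unknotted. Deleting any petal must give a planar $(r-1)$-bouquet, and $B_r$ itself must be nonplanar. I would extract both facts from the cited construction. If the citation does not provide them, I would argue nonplanarity through Ozawa--Tsutsumi, or through a coloring count as in Lemma~\ref{lem:clasp-iteration}.
\item[(b)] $b(B_r)\leq 2r+2$. I would exhibit an explicit bridge splitting in which the vertex side consists of two components, namely the pod together with one extra arc, so that it has weight $2+2r-2+2=2r+2$. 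The other side consists of $r+1$ arcs. The upper bound comes from the diagram of that construction, presumably one clasp between petals arranged in bridge position.
\end{enumerate}

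Combining (b) with the lower bound gives $b(B_r)=2r+2$, and the lower bound also shows that this is the minimum over almost unknotted $r$-petal bouquets.

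The main obstacle is step (a), and in particular confirming that deleting a petal outside any local support still gives a planar bouquet. This is exactly the caution raised before the statement. I would verify it directly on the initial diagram: each petal deletion should reduce to a planar bouquet by Reidemeister moves together with vertex twists.
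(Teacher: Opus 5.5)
Your proposal is correct and follows the paper's proof. Evenness and the sharp lower bound $2r+2$ come from the weight formula $|B\cap S|=2s+2r-2$ on the vertex side together with $s\geq2$ from Lemma~\ref{lem:two-components}, and attainment uses the initial bouquet of \cite{blackwell2026}. For the construction step you leave open, the paper uses three concrete tools:
\begin{enumerate}
\item a nonconstant equal-vertex $R_3$ coloring shows nonplanarity (your Ozawa--Tsutsumi fallback would not work, since that theorem assumes nontriviality);
\item after any petal is deleted, the vertex seed propagates through the twist-knot boxes, and \cite[Lemma~5.23]{blackwell2026} then shows the result is planar;
\item the Wirtinger--bridge correspondence, applied to a vertex seed of weight $2r$ and one arc seed of weight $2$, gives the $(2r+2)$-splitting, which is exactly the pod-plus-arc splitting you predicted.
\end{enumerate}
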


\begin{proof}
Let $s$ be the number of components on the side containing the
unique vertex.  The weight formula gives
\begin{equation}\label{eq:bouquet-weight}
                         |B\cap S|=2s+2r-2.
\end{equation}
This proves evenness.  For an almost unknotted bouquet,
Lemma~\ref{lem:two-components} gives $s\geq2$, hence
$b(B)\geq2r+2$.

For attainment, use the initial bouquet of
\cite[proof of Theorem~1.2, Figures~34--36]{blackwell2026},
reproduced in Figures~\ref{fig:twist-tangle} and~\ref{fig:bouquet}.
Its $r$ boxes are the three-string tangles from Fox
$3$-colorable twist knots, joined with the endpoint identifications
shown there.  The construction has a nonconstant equal-vertex
$R_3$ coloring, so it is nonplanar.  After any petal is deleted,
the vertex seed propagates through the first opened box and then
successively through the remaining boxes, as shown in
Figure~\ref{fig:twist-tangle}.  The resulting one-seed presentation
is planar by \cite[Lemma~5.23]{blackwell2026}.  This proves almost
unknottedness.  Before deletion, a vertex seed of weight $2r$ and
one arc seed of weight $2$ propagate through all boxes.
The Wirtinger--bridge correspondence
\cite[Theorem~1.1]{blackwell2026} therefore gives a bridge splitting
with $2r+2$ intersections.  Together with the lower bound, this
proves the claimed minimum.
\end{proof}

\begin{figure}[htbp]
\labellist
\small\hair 2pt
\pinlabel {$NW$} at 460 173
\pinlabel {$NC$} at 505 173
\pinlabel {$NE$} at 600 173
\pinlabel {$SW$} at 460 -13
\pinlabel {$SC$} at 505 -13
\pinlabel {$SE$} at 600 -13
\endlabellist
  \centering
  \includegraphics[width=.9\linewidth]{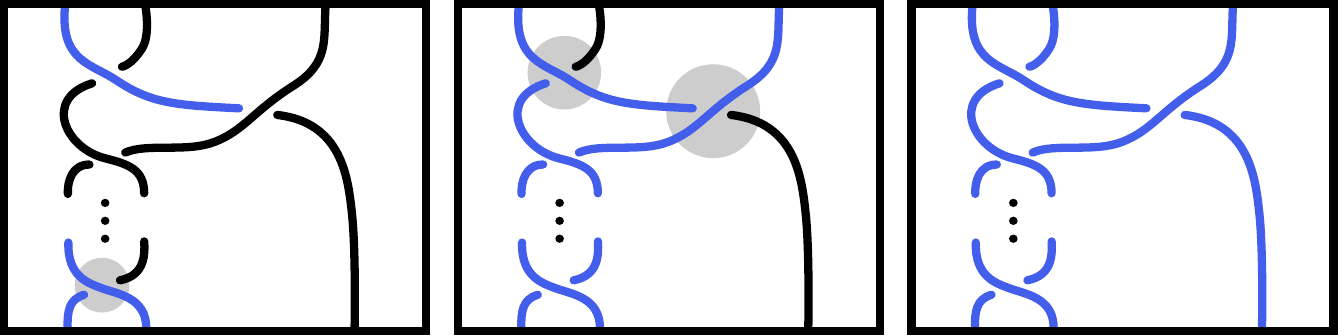}
  \caption{Seed propagation through the twist-knot tangle used
  in the initial bouquet construction.}
  \label{fig:twist-tangle}
\end{figure}

\begin{figure}[htbp]
  \centering
  \includegraphics[width=.7\linewidth]{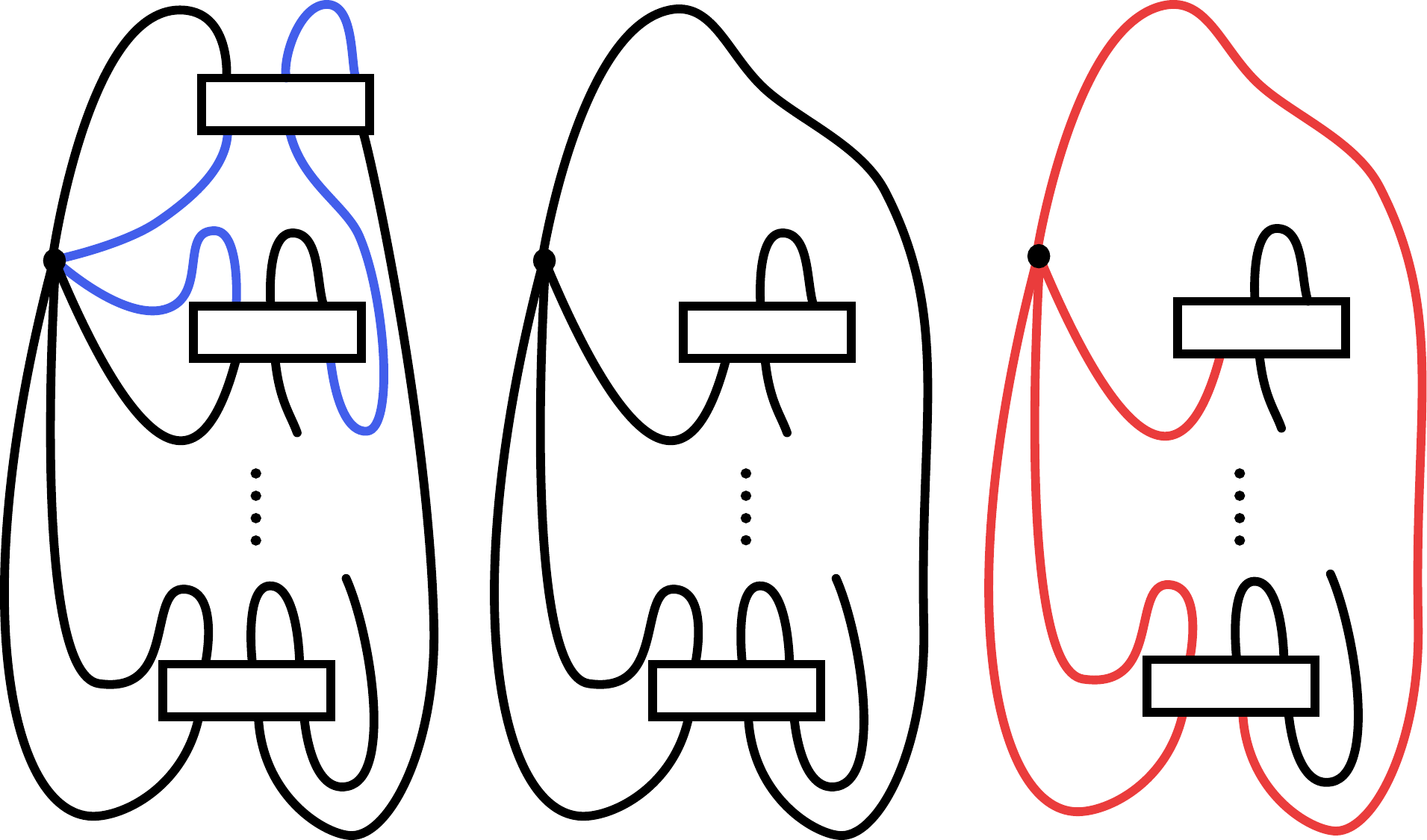}
  \caption{The initial almost unknotted bouquet $B_r$, deletion
  of a petal, and the start of vertex-seed propagation.}
  \label{fig:bouquet}
\end{figure}

The same tetrahedral clasping gives an unbounded family at each
fixed bouquet rank.  Orient each petal as a loop; the unique vertex
then has signed valence zero, so the equal-vertex rule for $T$ is
invariant for every rank.

\begin{prop}\label{prop:ravel-bouquets-fixed}
For every $r\geq2$ and $k\geq1$, there is a classical ravel
$r$-petal bouquet $C_{r,k}$ such that
\[
                         b(C_{r,k})\geq2k+2r.
\]
Its bridge index is even.  For $r=2$ the examples are almost
unknotted.
\end{prop}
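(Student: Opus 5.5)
The plan is to imitate the proof of Proposition~\ref{thm:non-eulerian}, with the planar $r$-petal bouquet as the seed diagram.

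\textbf{Construction.} Start with a planar diagram $D_0$ of the $r$-petal bouquet and orient each petal as a loop. The vertex then has signed valence $0$, so the equal-vertex $T$-coloring rule is invariant by Proposition~\ref{prop:equal-vertex}. At the vertex, choose three distinct incident half-edges. Since $r\ge2$ there are $2r\ge4$ half-edges, so for $r=2$ we may take both ends of one petal and one end of the other. Insert $k$ copies of $C$ in disjoint collar balls, as in Lemma~\ref{lem:clasp-iteration}, and call the result $C_{r,k}$. That lemma gives the ravel property and $\#\operatorname{Col}_T(C_{r,k})\ge4^{k+1}$.

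\textbf{Passing to a single flow.} To use the fixed-flow bound, expand the $2r$-valent vertex to a trivalent tree and choose a $\Z_3$-flow that gives the parallel quandle family. Following Proposition~\ref{thm:non-eulerian}, I would proceed as follows.
\begin{enumerate}
\item Give each petal flow $1$ along its loop orientation.
\item Orient and label the internal tree edges so that each trivalent vertex satisfies the flow equation.
\item Note that the diagonal maps are trivial for a quandle family, so the vertex rule is still equality of colors.
\end{enumerate}
The constant-on-the-tree colorings counted above then survive for this one flow. This flow bookkeeping is the step I expect to be the main obstacle. The tree edges must carry flows that sum correctly in $\Z_3$ while the colors stay equal, and the count of $2r$ incoming and outgoing ends must be reconciled. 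If a $\Z_3$-flow is awkward, the fallback is a $\Z$-flow, or the trivial group acting on half-edge orientations; the key point is only that every $T$-coloring of the bouquet lifts to a coloring of the expansion.

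\textbf{Bridge bound.} Take any bridge splitting and suppose the vertex lies in the tangle with $s$ components. Apply Theorem~\ref{thm:g-family-bound} to that side, using the propagation argument from the side containing the vertex after expanding it by Lemma~\ref{lem:handlebody-tangles}. This gives $4^{k+1}\le4^{s}$, so $s\ge k+1$. The weight formula \eqref{eq:bouquet-weight} then gives $|C\cap S|=2s+2r-2\ge2k+2r$. Evenness follows from the same formula, as in Proposition~\ref{prop:bouquet-family}.

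\textbf{The case $r=2$.} Deleting either petal removes at least one participating strand in every replacement ball. Lemma~\ref{lem:tetrahedral-clasping}(i) then undoes each clasp, leaving an unknot, so the proper subgraphs are planar. Nonplanarity comes from the coloring count exceeding $4$, so $C_{2,k}$ is almost unknotted.
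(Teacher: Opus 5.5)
Your construction, evenness argument and $r=2$ discussion match the paper, and your bound is correct, but you reach it by a detour the paper avoids.

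\textbf{How the paper gets the bound.} The paper never expands the vertex or picks a flow. With each petal oriented as a loop, the $2r$-valent vertex has signed valence $0$, so the equal-vertex $T$-rule is already invariant on the bouquet itself (Proposition~\ref{prop:equal-vertex}). Lemma~\ref{lem:trivial-coloring}, applied to the tangle containing the vertex, then gives $4^{k+1}\leq\#\operatorname{Col}_T(C_{r,k})\leq4^s$ directly, since each crossing-free tree is monochromatic.

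\textbf{Your flagged obstacle.} The flow step is therefore unnecessary, and it is also easy. Join the two ends of each petal (adjacent in a planar flower) at one trivalent vertex. Give the petals flow $1$ and all internal tree edges flow $0$, oriented away from a root, as for $\rho_n$ on $\widetilde V_n$. Quandle diagonal maps are trivial, so every equal-vertex coloring lifts. Your fallbacks would fail as stated, though you do not need them. Theorem~\ref{thm:g-family-bound} requires a finite flow group, which rules out $\Z$. Over the trivial group, \eqref{eq:F0} makes every crossing operation trivial, so the count carries no information. To apply that theorem to the vertex side, note from \eqref{eq:bouquet-weight} that the other side consists of $s+r-1>s$ arcs. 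Hence $\widehat b(N(C_{r,k}))\leq s$ after expanding the vertex inside its tangle.

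\textbf{What each route buys.} Your route yields a handlebody-level statement that the paper does not record: $\widehat b(N(C_{r,k}))\geq k+1$. By \eqref{eq:unweighted-half-intersection} this gives $b_{\mathrm{hb}}(N(C_{r,k}))\geq 2k+r+1$. The paper's route is shorter and uses only the spatial-graph invariance of the equal-vertex count, not the IH-invariance of the flowed family.
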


\begin{proof}
Start with a planar oriented $r$-petal bouquet and choose three
distinct half-edges at its vertex.  Insert $k$ copies of the
three-strand tetrahedral clasp in disjoint collars of those
half-edges.  Lemma~\ref{lem:clasp-iteration} gives the resulting
ravel $C_{r,k}$ at least $4^{k+1}$ equal-vertex $T$-colorings.
Notice that a petal uses only two half-edges at the vertex, even
when both belong to the chosen set of three; it therefore always
omits a strand of every clasp.

In an arbitrary bridge splitting, let $s$ be the component count
on the side containing the vertex.  One color per tree determines
at most one coloring of the whole graph, by
Lemma~\ref{lem:trivial-coloring}.  Consequently
$4^{k+1}\leq4^s$, and $s\geq k+1$.
Formula~\eqref{eq:bouquet-weight} now gives
\[
 |C_{r,k}\cap S|=2s+2r-2\geq2k+2r.
\]
This proves the lower bound and evenness.  When $r=2$, the ravel
condition is the almost unknotted condition, as noted at the
start of this section.  The abstract bouquet is unchanged as
$k$ increases, proving Theorem~\ref{thm:anyEuler}(ii).
\end{proof}
\FloatBarrier

\subsection{Bouquets as spatial graphs and as handlebody-knots}
\label{sec:spine-comparison}

The next construction gives an explicit bridge gap.  Its upper
bound on handlebody bridge index comes from a specified trivalent
presentation.

\begin{prop}\label{prop:bouquet-spine-count}
For every $r\geq2$, there is a classical ravel $r$-petal bouquet
$P_r$ such that
\[
 b(P_r)=2r+2,\qquad b_{\mathrm{hb}}(N(P_r))\leq r+4.
\]
Consequently,
\[
 b(P_r)-b_{\mathrm{hb}}(N(P_r))\geq r-2
                    =-\chi(P_r)-1\longrightarrow\infty.
\]
\end{prop}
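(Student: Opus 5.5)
The natural choice is $P_r=B_r$, the initial almost unknotted bouquet of Proposition~\ref{prop:bouquet-family}. It is almost unknotted, so every constituent circle subgraph, being a proper subgraph, is planar and hence an unlink; it is also nonplanar. Therefore $B_r$ is a classical ravel. Proposition~\ref{prop:bouquet-family} already gives $b(P_r)=2r+2$. The whole content is then the upper bound $b_{\mathrm{hb}}(N(P_r))\leq r+4$, and the stated consequence follows by subtraction, using $\chi(P_r)=1-r$.

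For the upper bound, I would exhibit one trivalent spine $\Gamma$ of $N(P_r)$ with a bridge sphere meeting it in $r+4$ points. By the Euler characteristic identity $\chi=c_++c_--q$ from Section~\ref{sec:bridge}, we have $q=(r-1)+c_++c_-$. So it suffices to find a trivalent representative admitting a splitting with exactly two tree components on each side, which gives $q=r-1+4=r+4$. By Lemma~\ref{lem:two-components}, fewer components per side is impossible for an almost unknotted spine. Hence this is exactly the Euler-characteristic floor, and we cannot hope to do better with this $P_r$.

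The construction would use the seed picture from the proof of Proposition~\ref{prop:bouquet-family}. There, the splitting realizing $2r+2$ has on the vertex side a single $2r$-valent pod of weight $2r$, plus one arc. The opposite side consists of $r+1$ arcs, so the count there is $s'=r+1$, which is where the cost lies. In the handlebody class we may expand the $2r$-valent vertex into a trivalent tree (Lemma~\ref{lem:handlebody-tangles}) and then perform IH moves or slides so that some of the lower arcs are absorbed into trees. Concretely, the plan is to split the vertex into two subtrees, one on each side of the sphere, connected through the sphere. The edges of the petals emanating from each box can then be slid, by handle slides realized as IH moves in $N(P_r)$, so that the lower side becomes two trees, each meeting many boundary points. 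Equivalently, I would look for a spine in which the $r$ twist-tangle boxes are threaded by two trees on each side. The Wirtinger--bridge correspondence \cite[Theorem~1.1]{blackwell2026}, with two seeds on each side propagating through all boxes as in Figure~\ref{fig:twist-tangle}, would then certify the splitting.

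The main obstacle is this explicit spine change. One has to verify that the seed propagation with only two seeds still succeeds in the new trivalent presentation, and that the moves used are genuinely IH and vertex moves, so that the regular neighborhood is unchanged. I would first try the case $r=2$, checking directly that $2+4=6$ points are achieved. I would then argue inductively that each additional box is absorbed by a slide of its petal's end over an edge of an existing lower tree. Such a slide adds one vertex to that tree and one intersection point, matching the increment of $1$ per petal in $r+4$.
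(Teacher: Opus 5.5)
Choosing $P_r=B_r$ handles the ravel property and $b(P_r)=2r+2$ correctly. The gap is the only substantive claim, $b_{\mathrm{hb}}(N(P_r))\leq r+4$, which is never proved. The spine change (``split the vertex'', ``slide the petal ends so that the lower side becomes two trees'') is a plan, and you flag it yourself as the main obstacle.

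Nothing in the proposal shows that the $r+1$ lower arcs running through the $r$ knotted twist-knot boxes of $B_r$ can be merged into a few trees by handle slides. The inductive claim is also unsupported: you assert that each slide adds one vertex and one intersection while keeping both tangles trivial. Pushing an attaching point through the bridge sphere keeps the tangles trivial only if suitable cancelling disks exist in the complement of the rest of the graph, and you do not produce them.

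Your count is also off. Since $\chi=1-r$, one has $q=(r-1)+c_++c_-$, so two trees on each side gives $r+3$, not $r+4$. The argument of Lemma~\ref{lem:two-components} gives the floor $r+3$, and the relevant input there is that $N(B_r)$ is a nontrivial handlebody-knot, not that the spine is almost unknotted. The bound $r+4$ corresponds to $c_++c_-=5$. So you are in effect aiming at a strictly stronger, unestablished statement about the knotted genus-$r$ handlebody $N(B_r)$.

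The missing idea is to make the extra genus come from \emph{trivial} handles. The paper takes the fixed bouquet $B_2$ with $b(B_2)=6$ and adjoins $r-2$ small planar petals at its vertex. This $P_r$ is a ravel: all petals are unknots, any two share the vertex, and the subgraph $B_2$ is nonplanar. It is not almost unknotted for $r\geq3$.

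Adding each new petal as two legs of the vertex tree plus one arc on the other side gives $b(P_r)\leq 2r+2$. For the reverse inequality, a single vertex-side tree would be a pod and would force planarity, so $s\geq2$ and \eqref{eq:bouquet-weight} gives equality.

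For the handlebody bound, expand the vertex of $B_2$ using Lemma~\ref{lem:handlebody-tangles}. Then slide each small petal, which bounds a disk disjoint from everything else, onto a terminal branch crossing the sphere. There it becomes two parallel segments between two trivalent junctions. Each such petal costs one intersection instead of two, and the component counts stay $2$ and $3$, giving $6+(r-2)=r+4$. Your intuition of one extra intersection per petal is right, but it is available only for petals that are unknotted handles.
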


\begin{proof}
Take the fixed almost unknotted two-petal bouquet $B_2$ from
Proposition~\ref{prop:bouquet-family}, with $b(B_2)=6$.
Obtain $P_r$ by adjoining $r-2$ small planar petals at its vertex.
The old two petals and the new petals are all unknots; no two are
vertex-disjoint.  The subgraph $B_2$ proves nonplanarity, so $P_r$
is a ravel.

In the six-intersection bridge presentation of $B_2$, add each
new petal as two short legs from the vertex-containing tree and
one boundary-parallel arc in the opposite ball.  Each petal adds
two intersections, giving $b(P_r)\leq6+2(r-2)=2r+2$.
Conversely, if a bridge splitting of a bouquet has only one tree
on the vertex-containing side, that tree is a pod.  Flatten the
opposite trivial arc tangle and absorb its boundary braid at the
pod vertex, as in \cite[Lemma~5.23]{blackwell2026}.  The bouquet
would be planar.  Nonplanarity therefore forces at least two
components on the vertex-containing side.  Formula
\eqref{eq:bouquet-weight} gives $b(P_r)\geq2r+2$.

For the handlebody upper bound, choose a trivalent spine of the
already constructed neighborhood $N(P_r)$.  First expand the
four-valent vertex in the six-intersection presentation of $B_2$;
Lemma~\ref{lem:handlebody-tangles} preserves those intersections.
For each of the $r-2$ small planar petals, slide its two attaching
intervals along this spine and separate them onto opposite sides
of the bridge sphere on a terminal branch.  The petal is then
represented by two parallel segments between two trivalent
junctions.  The disk between the segments is the disk of the
original small planar petal.  This specifies a change of spine
inside $N(P_r)$; collapsing the connecting tree recovers its
bouquet spine.

In each bridge ball, the terminal branch has become a Y with
two boundary leaves.  It is still a trivial tree, and the other
tree components are unchanged.  Each separated petal contributes
one additional intersection, since two parallel segments replace
one segment across the sphere.  Choose the petal disks disjoint
so that these changes can be made simultaneously.  The resulting
trivalent spine has $6+(r-2)=r+4$ intersections and represents
$N(P_r)$.  Hence $b_{\mathrm{hb}}(N(P_r))\leq r+4$.
Subtracting proves the gap and Theorem~\ref{thm:anyEuler}(iii).
\end{proof}

\section{The constrained and unconstrained indices}\label{sec:gap}

\subsection{Colorings of trivial tangles}

The bridge estimate needed below uses the equal-vertex quandle
rule.  Its one-color-per-tree property is the reason that this
count bounds the number of components in either bridge tangle.

\begin{lem}\label{lem:trivial-coloring}
Let $T$ be a trivial tangle with $s$ connected components, colored by
a quandle $Q$ using the equal-vertex rule.  Then
\[
                         \#\operatorname{Col}_Q(T)\leq |Q|^s.
\]
If $T$ is either tangle in a bridge splitting of $G$, and the
coloring rule is invariant under the allowed graph moves, then
\[
                         \#\operatorname{Col}_Q(G)\leq |Q|^s.
\]
\end{lem}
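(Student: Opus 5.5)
The plan mirrors the proof of Theorem~\ref{thm:g-family-bound}, specialized to the equal-vertex quandle rule, where no flow has to be carried along. The first claim concerns a trivial tangle $T$ with components $P_1,\ldots,P_s$. Each $P_i$ is a tree or a proper arc.

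\emph{Step 1: one color per component in crossing-free form.} Stack the trivializing braid $\beta$ so that $T$ becomes a crossing-free forest $T'$. In $T'$, a component has no crossings, so its semiarcs are exactly its edges. The equal-vertex rule forces adjacent edges to have the same color, and connectedness then makes every coloring constant on each component. Hence $\#\operatorname{Col}_Q(T')\leq|Q|^s$, with equality when boundary colors are left free.

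\emph{Step 2: the braid does not change the count.} For a fixed tangle diagram, colorings of $T$ correspond bijectively to colorings of $T$ stacked with $\beta$. The reason is that colors propagate uniquely through the braid crossings: right translations are bijections, and virtual crossings carry colors unchanged. Stacking the reverse braid and cancelling it by Reidemeister~II moves, as in Figure~\ref{fig:reverse-braid}, therefore gives a bijection with the colorings of $T'$. These Reidemeister~II moves and virtual moves involve no vertex, so the unique-extension property of quandles applies. Since boundary colors at the bottom of the braid are determined by those on $T$, this proves $\#\operatorname{Col}_Q(T)\leq|Q|^s$. As a fallback, one can argue directly on $T$: read the braid in reverse, from $T'$ outward, and each step is a bijection.

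\emph{Step 3: the statement for the whole graph.} Let $G=T_+\cup_\ell T_-$ be a bridge splitting and suppose $T$ is $T_+$. Because the coloring rule is invariant, we may compute $\#\operatorname{Col}_Q(G)$ on this diagram. Restricting a coloring of $G$ to $T_+$ gives a map $\operatorname{Col}_Q(G)\to\operatorname{Col}_Q(T_+)$. This map is injective: the colors on $T_+$ fix the colors at $\ell$, and these then propagate uniquely through the braid of $T_-$. Each component of the crossing-free part of $T_-$ meets $\ell$ and is constant, so its color is determined, or the data is inconsistent. This argument is the same as the last paragraph of the proof of Theorem~\ref{thm:g-family-bound}, and combining it with Step~2 gives the bound.

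The main subtlety I expect is Step~2. One must check that passing through the trivializing braid is a bijection of colorings that preserves the equal-vertex constraint. This holds because the braid lies in a strip containing no vertices, so only crossing relations are involved there.
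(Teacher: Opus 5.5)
Your proposal is correct and is essentially the paper's proof: stack the trivializing braid to obtain a crossing-free forest, on which the equal-vertex rule makes each component monochromatic. For the whole graph, propagate from that forest through the middle braid and recover each tree on the other side from one of its boundary leaves. Phrasing the second part as injectivity of the restriction $\operatorname{Col}_Q(G)\to\operatorname{Col}_Q(T_+)$ only repackages the paper's forest--braid--forest propagation, which likewise uses the invariance hypothesis to put $T_-$ in braid-plus-forest form.
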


\begin{proof}
Stack a braid that turns $T$ into a crossingless forest.  Stacking is
reversible, so it gives a bijection on coloring sets.  On a connected
crossingless tree or proper arc, the equal-vertex relation forces all
semi-arcs to have the same color.  There are therefore at most $|Q|$
choices per component and at most $|Q|^s$ choices for $T$.

For the second assertion, put the splitting into crossing-free
upper and lower forests separated by a braid.  Choose the forest
corresponding to $T$ as the starting side.  Its colors determine
all braid colors by the invertible crossing rules.  Every component
of the other forest has a boundary leaf, whose color determines
all colors in that tree by the equal-vertex rule.  Thus the starting
colors extend to at most one coloring of $G$, proving the bound.
\end{proof}

We will also use the following connected-sum estimate.  A finite
quandle $Q$ is \emph{homogeneous} if its automorphism group acts
transitively on $Q$.

\begin{lem}[Connected-sum lower bound]\label{lem:connected-sum}
Let $Q$ be a finite homogeneous quandle, and let $D_1\#D_2$ be formed
by removing small open intervals from crossing-free edge subarcs
and joining the two resulting pairs of endpoints without new crossings.
Then
\[
 \#\operatorname{Col}_Q(D_1\#D_2)
 \geq\frac{\#\operatorname{Col}_Q(D_1)\,
         \#\operatorname{Col}_Q(D_2)}{|Q|}.
\]
\end{lem}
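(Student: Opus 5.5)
The plan is to build an injection, up to a factor of $|Q|$, from pairs of colorings into colorings of the sum. Write $p_i$ for the crossing-free subarc of $D_i$ from which the open interval is removed. In $D_i$ the whole subarc $p_i$ carries a single color, since it contains no crossings or vertices. For $q\in Q$, let $N_i(q)$ be the number of colorings of $D_i$ whose color on $p_i$ is $q$. Then
\[
 \#\operatorname{Col}_Q(D_i)=\sum_{q\in Q}N_i(q).
\]

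The first step uses homogeneity to show that $N_i(q)$ does not depend on $q$. Let $\alpha$ be an automorphism of $Q$. Composing a coloring with $\alpha$ gives another coloring: the crossing relations are preserved because $\alpha$ is a quandle homomorphism. The equal-vertex relations are preserved trivially. At virtual crossings, under the flip rule used with the equal-vertex relation, colors are simply carried along, so these are preserved as well. This composition is a bijection on colorings, with inverse given by $\alpha^{-1}$, and it carries the set with color $q$ on $p_i$ onto the set with color $\alpha(q)$ on $p_i$. Transitivity of the automorphism group then gives $N_i(q)=\#\operatorname{Col}_Q(D_i)/|Q|$ for every $q$.

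The second step is the gluing. In $D_1\#D_2$, the two cut subarcs become two new arcs, each joining an end of $p_1$ to an end of $p_2$. These joining arcs have no new crossings, so a coloring of $D_1\#D_2$ gives each joining arc a single color. Suppose $C_1$ and $C_2$ are colorings of $D_1$ and $D_2$ that assign the same color $q$ to $p_1$ and $p_2$. Then their union is a coloring of $D_1\#D_2$. Every crossing and vertex relation is inherited from $D_1$ or from $D_2$, and both halves of each cut arc, together with the joining arcs, carry the common color $q$. Distinct pairs $(C_1,C_2)$ give distinct colorings, because a coloring of the sum restricts to the semiarcs of each $D_i$. Hence
\[
 \#\operatorname{Col}_Q(D_1\#D_2)\geq\sum_{q}N_1(q)N_2(q)=|Q|\cdot\frac{\#\operatorname{Col}_Q(D_1)\,\#\operatorname{Col}_Q(D_2)}{|Q|^2},
\]
which is the claimed bound.

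The only delicate point is the bookkeeping in the second step. The remaining portions of $p_i$ on either side of the removed interval must be the same semiarc in $D_i$, so that a single color $q$ is forced on both ends and the endpoint conditions match. This is exactly what the hypothesis that the subarcs are crossing-free and the joins introduce no crossings guarantees. If the coloring rule also involves a virtual switch, the automorphism step would additionally require that $\alpha$ commute with that switch; for the flip this is automatic.
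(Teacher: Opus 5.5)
Your proof is correct and takes the same approach as the paper's. Homogeneity makes the number of colorings of $D_i$ with a prescribed color on the gluing subarc equal to $\#\operatorname{Col}_Q(D_i)/|Q|$, and gluing pairs of colorings that agree on that subarc gives an injection into $\operatorname{Col}_Q(D_1\#D_2)$, so the count is at least $\sum_x c_1(x)c_2(x)$. Your explicit check that quandle automorphisms act on colorings, including compatibility with the equal-vertex rule and the flip at virtual crossings, fills in a step the paper leaves implicit.
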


\begin{proof}
For $x\in Q$, let $c_i(x)$ be the number of colorings of $D_i$ in
which the chosen crossing-free gluing subarc has color $x$.  Homogeneity supplies a bijection
between the colorings counted by $c_i(x)$ and $c_i(y)$, so
$c_i(x)=\#\operatorname{Col}_Q(D_i)/|Q|$.  A pair of summand colorings with the same color on the gluing
edge gives a coloring of the connected sum.  Distinct pairs give
distinct colorings, since restriction recovers the pair.  Therefore
\[
 \#\operatorname{Col}_Q(D_1\#D_2)
   \geq\sum_{x\in Q}c_1(x)c_2(x),
\]
which is the stated lower bound.
\end{proof}

For a $k$-strand vertex sum, remove the two vertex neighborhoods
first.  If $r_{\mathbf x}$ and $s_{\mathbf x}$ count colorings of
these \emph{punctured} diagrams with prescribed boundary tuple
$\mathbf x$ (ordered by the gluing), then
\[
 \#\operatorname{Col}(D_1\#_kD_2)
       =\sum_{\mathbf x}r_{\mathbf x}s_{\mathbf x}.
\]
The sum is over all matching boundary tuples.  Restricting to
constant tuples coming from colorings of the original closed
summands gives a lower bound, which is sufficient for our
constructions.

\begin{figure}[htbp]
  \centering
  \includegraphics[width=.3\linewidth]{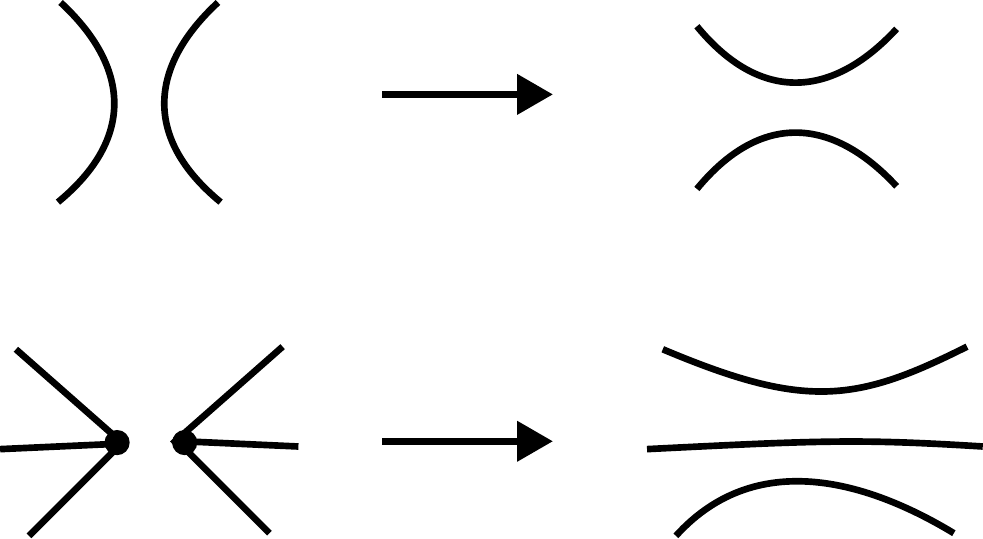}
  \caption{An edge connected sum (top) and a three-strand vertex sum
  (bottom).}
  \label{fig:sums}
\end{figure}

\subsection{An unbounded gap}

Let $S$ be the Suzuki $\theta_4$-curve, and let $G_n=\#^n S$ be the
$n$-fold edge connected sum.  Thus $G_n$ has $2n$ vertices, all of
valence four.

\begin{proof}[Proof of Theorem~\ref{thm:main-gap}]
For one summand the presentation has six bridge-sphere
intersections.  Form an edge connected sum at small balls centered
on bridge-sphere intersections in two summands.  Each such ball
meets the graph in one unknotted arc.  Gluing the punctured bridge
spheres joins one tree from each summand on each side; the joined
components are still trivial trees.  Two intersection points are
removed, so the counts add with a subtraction of two.  Iterating
gives the presentations in Figure~\ref{fig:unconstrained-sum} and
\begin{equation}\label{eq:unconstrained-upper}
                             b(G_n)\leq 4n+2.
\end{equation}

Use the equal-vertex coloring by $R_3$.  The Suzuki $\theta_4$-curve
has nine such colorings \cite[Example~5.8, Figure~20]{blackwell2026}.  Since
$R_3$ is homogeneous, Lemma~\ref{lem:connected-sum} gives
\begin{equation}\label{eq:suzuki-colorings}
 \#\operatorname{Col}_{R_3}(G_n)
   \geq\frac{9^n}{3^{n-1}}=3^{n+1}.
\end{equation}

Lemma~\ref{lem:trivial-coloring} applies to either side of any
bridge splitting.  Equation~\eqref{eq:suzuki-colorings} therefore
forces both component counts to be at least $n+1$.  Since
$\chi(G_n)=-2n$, the Euler-characteristic formula gives
\[
 b(G_n)\geq2(n+1)+2n=4n+2.
\]
Together with \eqref{eq:unconstrained-upper}, this proves
$b(G_n)=4n+2$.

Now consider an arbitrary constrained bridge splitting and let $T_+$
be the tangle containing every vertex.  If $T_+$ has $s$ components,
Lemma~\ref{lem:trivial-coloring} and
\eqref{eq:suzuki-colorings} imply
\[
                         3^{n+1}\leq3^s,
\]
so $s\geq n+1$.  Applying Lemma~\ref{lem:weight} to the $2n$
four-valent vertices gives
\[
 |T_+\cap\ell|
    =2s+\sum_{v\in V(G_n)}(\deg(v)-2)
    =2s+4n
    \geq6n+2.
\]
It follows that $b_c(G_n)\geq6n+2$.  For the opposite inequality, start with the eight-intersection
constrained presentation of one summand in
Figure~\ref{fig:constrained-sum}.  Use the same connected-sum
gluing at bridge-sphere intersections, with all vertex-containing
sides matched.  The resulting splitting is constrained and has
$8n-2(n-1)=6n+2$ intersections.  Hence
\begin{equation}\label{eq:constrained-value}
                             b_c(G_n)=6n+2.
\end{equation}
Combining \eqref{eq:unconstrained-upper} and
\eqref{eq:constrained-value} yields
\[
                         b_c(G_n)-b(G_n)=2n.
\]

It remains to check the ravel condition.  Suzuki's $\theta_4$-curve
is almost unknotted, so each constituent cycle is an unknot.  At
each edge connected sum there is a decomposing sphere meeting the
graph in two points.  A circle subgraph either lies on one side
or meets this sphere in both points; in the latter case it is a
connected sum of constituent circles from the two summands.
Those circles are unknots.  For a collection of vertex-disjoint
cycles, at most one cycle can meet the decomposing sphere.  Capping
on either side gives unlink subgraphs in the summands, and gluing
these unlinks along the selected components again gives an unlink.
Induction proves that every circle or link subgraph of $G_n$ is
trivial.  Finally, \eqref{eq:suzuki-colorings} exceeds three, the
number of equal-vertex $R_3$ colorings of a connected planar graph.
Thus $G_n$ is nonplanar and is a ravel.
\end{proof}

\begin{figure}[htbp]
  \centering
  \includegraphics[width=.9\linewidth]{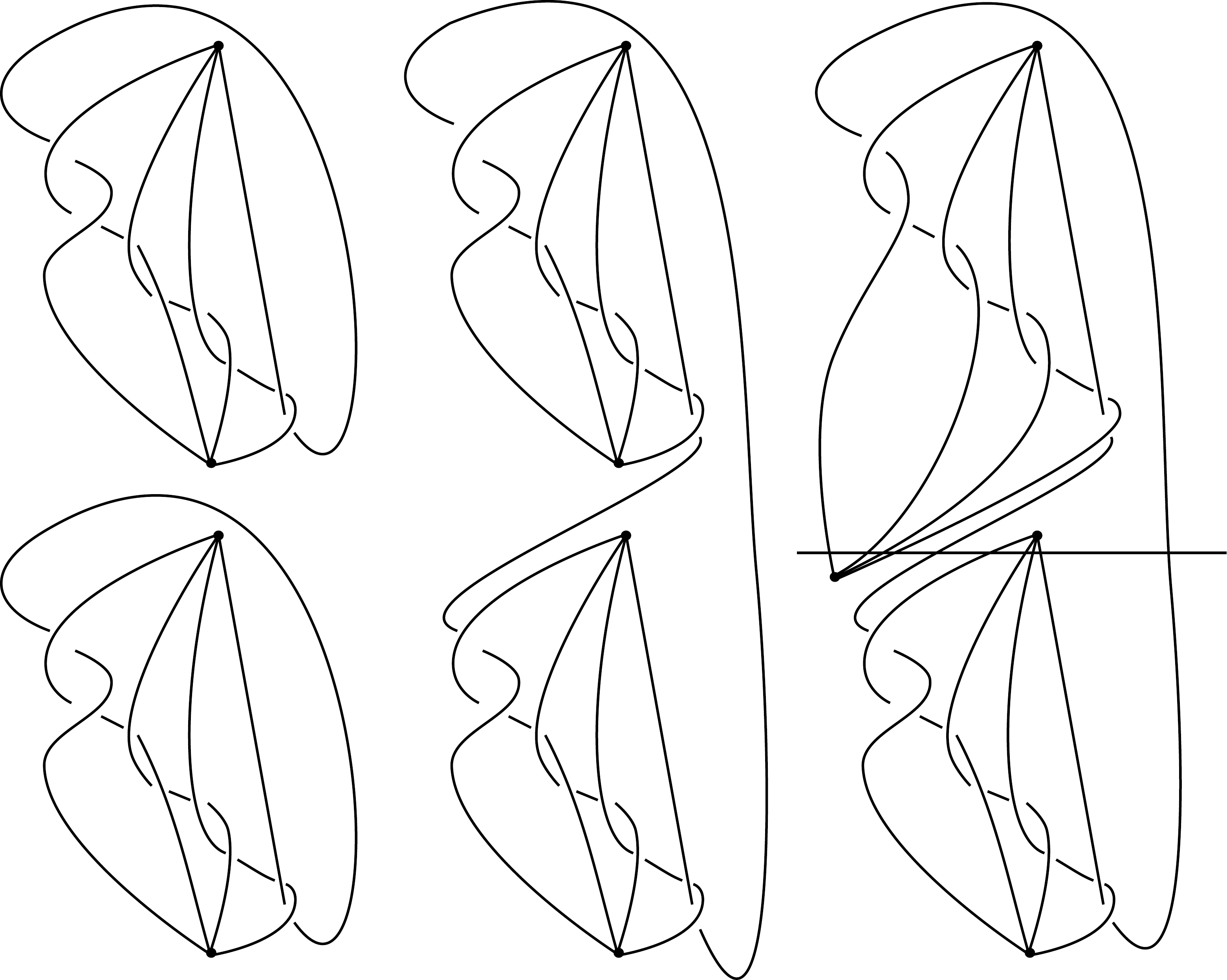}
  \caption{The unconstrained presentations of the connected sums.}
  \label{fig:unconstrained-sum}
\end{figure}

\begin{figure}[htbp]
  \centering
  \includegraphics[width=.85\linewidth]{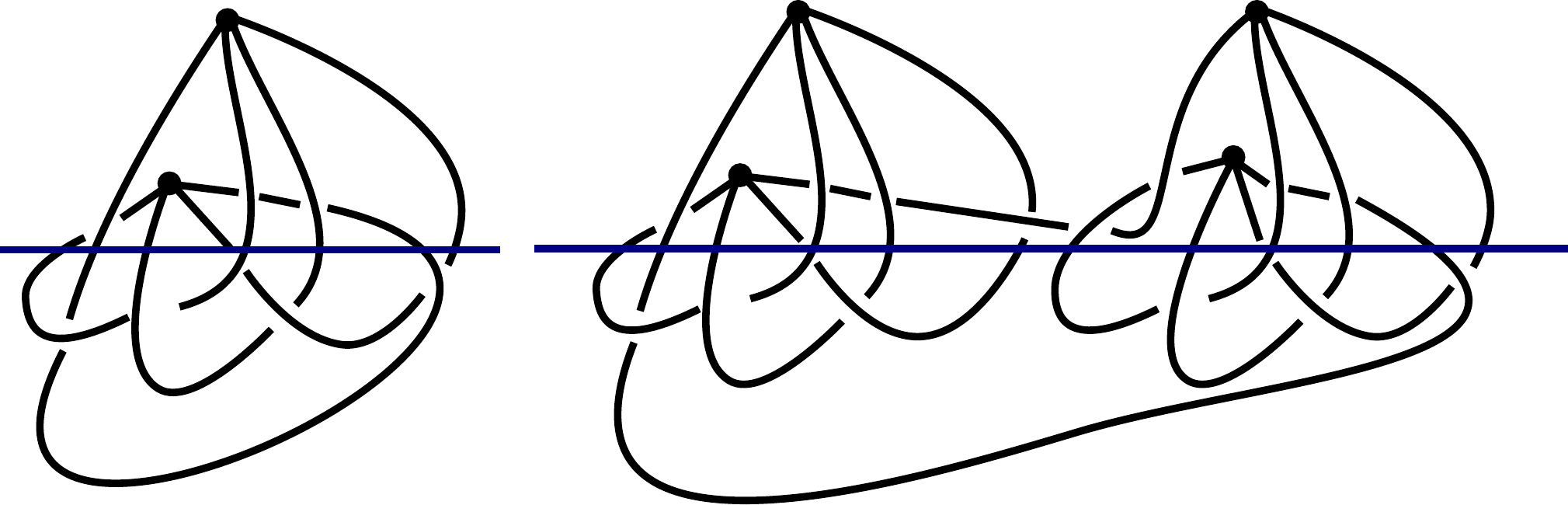}
  \caption{The constrained presentations of the same connected sums.}
  \label{fig:constrained-sum}
\end{figure}

The underlying abstract graph changes with $n$.  This is unavoidable
for an unbounded-gap result.

\begin{prop}\label{prop:fixed-graph}
For every finite abstract graph $\Gamma$ there is a constant
$C(\Gamma)$ such that every virtual embedding $G$ of $\Gamma$
satisfies
\[
                         b_c(G)-b(G)\leq C(\Gamma).
\]
\end{prop}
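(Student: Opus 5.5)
Starting from a bridge splitting $D=T_+\cup_\ell T_-$ of $G$ that realizes $b(G)$, I will modify it into a constrained splitting by moving every vertex of $T_-$ across $\ell$ into $T_+$. Each such move will increase the intersection count by an amount bounded in terms of $\Gamma$ alone. Summing the increases over the at most $|V(\Gamma)|$ vertices then gives $b_c(G)\leq b(G)+C(\Gamma)$.

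The basic move works as follows. Use the trivializing braids to present the splitting as a crossing-free upper forest, a virtual braid, and a crossing-free lower forest. Let $v$ be a vertex in a lower tree component $P$. Choose a small disk about $v$ in the crossing-free lower forest, meeting $P$ in a $\deg(v)$-pod. Drag this pod upward through the braid strip by a thin vertical finger that follows one of its edges, namely an edge of $P$ on the path from $v$ to a boundary point of $P$. This isotopy is a vertex slide along an edge, realized by classical and virtual Reidemeister moves and the virtual vertex move as in Figure~\ref{fig:pulling}. Alternatively, I can cut $P$ along that path, as in the proof of Proposition~\ref{prop:existence}, so that $v$ becomes a pod attached to the boundary arc.

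After the move, $v$ lies in a new small pod component of the upper side. It is joined to the upper forest through the boundary point it was dragged along, and its other $\deg(v)-1$ legs now cross $\ell$ as new boundary points. I will check two things. First, the upper side stays a forest: the new pod meets the old upper tree at a single boundary point, which is absorbed, so no cycle forms. Second, the lower side stays a forest: $P$ minus the pod splits into at most $\deg(v)-1$ subtrees, each still containing a boundary point. The counts then show that the intersection number increases by at most $\deg(v)-2$, or at most $2\deg(v)$ if we count more crudely. Lemma~\ref{lem:weight} confirms this: the number of boundary points on each side is $2s+\sum(\deg-2)$, and moving one vertex changes both $s_\pm$ and the vertex sum by amounts bounded by $\deg(v)$.

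Iterating over all vertices of $T_-$ yields
\[
 b_c(G)\leq b(G)+\sum_{v\in V(\Gamma)}2\deg(v)=b(G)+4|E(\Gamma)|,
\]
so $C(\Gamma)=4|E(\Gamma)|$ should suffice. If $G$ has no vertices, then $b_c=b$ and there is nothing to prove.

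The main obstacle is showing that both tangles remain \emph{trivial} in the sense of Definition~\ref{def:trivial-tangle} after the move. The pod must travel through the braid strip, and in the virtual setting the trivializing braid must be adjusted accordingly. My plan is to perform the move entirely inside the crossing-free lower forest first, contracting the path from $v$ to the boundary so that $v$ sits adjacent to $\ell$. I then push $v$ across $\ell$ only when there are no braid crossings between $v$ and $\ell$. This is achieved by sliding the boundary point through the braid: I conjugate by the braid and its reverse (Figure~\ref{fig:reverse-braid}) so that the relevant strand is locally straight near $\ell$. In this position the new upper pod is visibly crossingless after the same trivializing braid. The remaining issue is the requirement that each side is nonempty and that every component of $D$ meets $\ell$, and both conditions are preserved automatically.
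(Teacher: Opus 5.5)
Your argument is the paper's. Put a minimizing splitting into forest--braid--forest form, then slide each vertex on the non-chosen side along a leaf edge through the braid, with its other edge germs following as a parallel cable. Each move costs $\deg(v)-2\le 2\deg(v)$ intersections, giving the same constant $2\sum_v\deg(v)=4|E(\Gamma)|$.

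Two things need tightening. First, ``contracting the path'' is an isotopy only when $v$ is joined to a boundary point by a single edge, so at each step choose such a $v$, as the paper does; one always exists in a tree containing a vertex. Second, the trivializing braid after the move is the cabled version of the old braid, not the same braid.
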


\begin{proof}
Put a minimizing bridge splitting into the form of two
crossing-free forests separated by a braid, and choose the side
that will contain all vertices.  In a tree on the other side,
choose a vertex $v$ adjacent to a boundary leaf.  Slide $v$ along
that leaf edge and through the braid to the chosen side.  The
other incident edge germs follow in a narrow collar of the same
strand.  Crossings met during this passage are resolved by the
vertex Reidemeister moves; the resulting crossings remain in the
middle braid.  Figure~\ref{fig:bounded} shows the forest part
of this operation.

If $v$ has valence $d$, the old boundary leaf is replaced by
$d-1$ boundary leaves.  On the departure side the tree splits
into trees or arcs; on the arrival side a pod is attached to one
boundary leaf of an existing tree.  Both forests therefore remain
trivial.  The operation adds $d-2$ intersection points, in
particular at most $2d$.  Repeat with a boundary-adjacent vertex
of the remaining forest until no vertices remain on that side.
The result is a constrained splitting.  Summing the coarse bound
$2d$ over the moved vertices gives, for example,
\[
                         C(\Gamma)=2\sum_{v\in V(\Gamma)}\deg(v).
\]
This depends only on the fixed abstract graph.
\end{proof}

\begin{figure}[htbp]
  \centering
  \includegraphics[width=.5\linewidth]{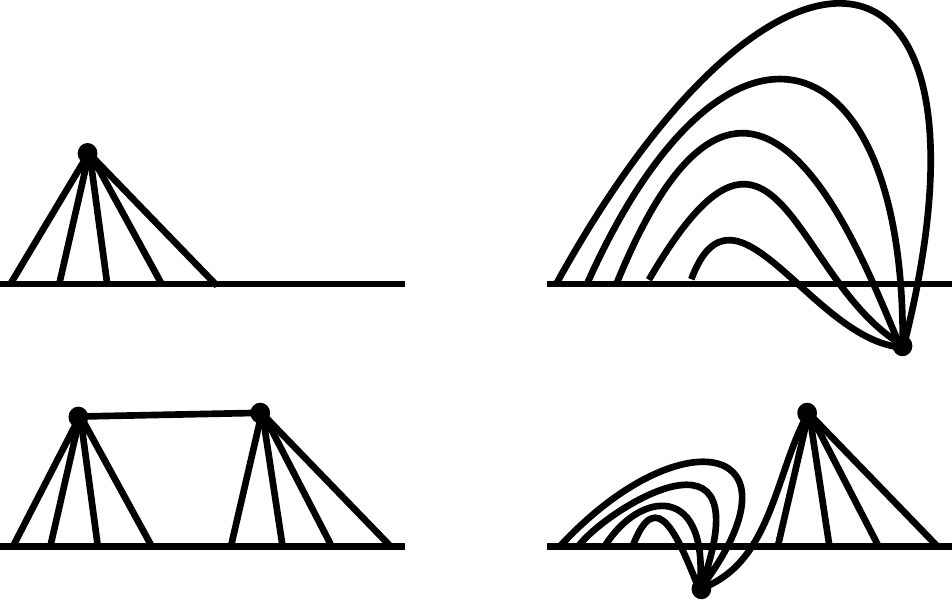}
  \caption{Moving a vertex across the bridge line at bounded cost.}
  \label{fig:bounded}
\end{figure}

\begin{cor}[Constrained graph complexity versus spine complexity]
\label{cor:spine-gap}
The family of Theorem~\ref{thm:main-gap} satisfies
\[
 b_c(G_n)-b_{\mathrm{hb}}(N(G_n))\geq2n=-\chi(G_n).
\]
\end{cor}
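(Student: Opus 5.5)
We need $b_c(G_n)-b_{\mathrm{hb}}(N(G_n))\geq 2n$. Theorem~\ref{thm:main-gap} gives $b_c(G_n)=6n+2$. So the plan is to prove the upper bound $b_{\mathrm{hb}}(N(G_n))\leq 4n+2$ and subtract. We also record that $\chi(G_n)=-2n$. The graph has $2n$ four-valent vertices, hence $4n$ edges, so $\chi=2n-4n=-2n$; this gives the final equality $2n=-\chi(G_n)$.

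For the upper bound we only need Corollary~\ref{cor:handlebody-comparison}, which gives $b_{\mathrm{hb}}(N(G_n))\leq b(G_n)$. Theorem~\ref{thm:main-gap} gives $b(G_n)=4n+2$. Hence
\[
 b_c(G_n)-b_{\mathrm{hb}}(N(G_n))\geq (6n+2)-(4n+2)=2n .
\]
Concretely, the corollary takes the minimizing unconstrained splitting of Figure~\ref{fig:unconstrained-sum}. It expands each four-valent vertex into a planar trivalent tree inside the tangle containing it. Lemma~\ref{lem:handlebody-tangles} shows this keeps the $4n+2$ intersections and the triviality of both tangles.

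No step here is a real obstacle. The one point to check is that the classical $b_{\mathrm{hb}}(N(G_n))$ is minimized over trivalent spines of the regular neighborhood. A vertex expansion done inside a bridge ball changes the spine only by the permitted tree replacement, so it preserves $N(G_n)$. The corollary already covers this. If one also wanted a lower bound, e.g.\ $b_{\mathrm{hb}}(N(G_n))\geq 4n+2$ via equation \eqref{eq:unweighted-half-intersection}, that would need a handlebody-invariant coloring count. The statement does not require it.
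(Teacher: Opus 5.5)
Your proposal is correct and matches the paper's proof: combine $b_{\mathrm{hb}}(N(G_n))\leq b(G_n)$ from Corollary~\ref{cor:handlebody-comparison} with $b(G_n)=4n+2$ and $b_c(G_n)=6n+2$ from Theorem~\ref{thm:main-gap}. Your Euler characteristic count ($2n$ four-valent vertices, hence $4n$ edges) is equivalent to the paper's observation that an edge connected sum adds Euler characteristics and the Suzuki $\theta_4$-curve has $\chi=-2$.
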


\begin{proof}[Proof of Corollary~\ref{cor:spine-gap}]
Corollary~\ref{cor:handlebody-comparison} gives
$b_{\mathrm{hb}}(N(G_n))\leq b(G_n)$.  Hence
\[
 b_c(G_n)-b_{\mathrm{hb}}(N(G_n))
 \geq b_c(G_n)-b(G_n)\geq2n.
\]
An edge connected sum adds Euler characteristics, and the Suzuki
$\theta_4$-curve has Euler characteristic $-2$.  Thus
$\chi(G_n)=-2n$, proving the asserted linear estimate.
\end{proof}

\clearpage
\bibliographystyle{amsplain}
\bibliography{ref}

\end{document}